\pgfplotsset{compat=1.16}
\newtheorem{Theorem}{Theorem}[section]
\newtheorem{Definitio}[Theorem]{Definition}
\newenvironment{Definition}{\begin{Definitio} \rm }{\end{Definitio}}
\newtheorem{Proposition}[Theorem]{Proposition}
\newtheorem{Remar}[Theorem]{Remark}
\newenvironment{Remark}{\begin{Remar} \rm }{\end{Remar}}
\newtheorem{Exampl}[Theorem]{Example}
\newenvironment{Example}{\begin{Exampl} \rm }{\end{Exampl}}
\author{Rudy Dissler}
\title{Relative Trisections of Fiber Bundles over the Circle}
\begin{document}
\maketitle
\begin{abstract}
For an oriented $4$--dimensional fiber bundle over $S^{1}$, we build a relative trisection from a sutured Heegaard splitting of the fiber. We provide an algorithm to explicitly construct the associated relative trisection diagram, from a sutured Heegaard diagram of the fiber. As an application, we glue our relative trisection diagrams with existing diagrams to recover trisected closed fiber bundles over $S^1$ and trisected spun manifolds, and to provide trisections for $4$--dimensional open-books.
\end{abstract}
\section{INTRODUCTION}
Gay and Kirby's trisection theory describes closed smooth $4$--manifolds as unions of simple pieces: $4$--dimensional $1$--handlebodies glued together in a suitable way. Results of existence and uniqueness (up to a stabilisation move) were established by Gay and Kirby, who also started to decline the concept in a relative setting, giving birth to the notion of relative trisections of compact $4$--manifolds \cite{gay2016trisecting}. The theory of relative trisections was then developed by Castro, Gay and Pinz\'{o}n-Caicedo in \cite{castro2018diagrams} and \cite{castro2018trisections}, establishing, as in the closed case, results of existence and uniqueness (up to stabilisation). Examples of closed trisected manifolds in the literature range from classic low-genus trisections in \cite{gay2016trisecting} or \cite{meier2017genus}, to more complicated constructions, such as trisections of fiber bundle over the circle by Koenig \cite{koenig2017trisections}, surface bundles over surfaces by Williams \cite{williams2020trisections}, or spun mani\-folds by Meier \cite{meier2017trisections}. In the relative setting, one can find simple examples such as relative trisections of $B^{4}$ or $I \times S^{3}$ in \cite{castro2016relative}, or more complicated examples of disk bundles over the $2$--sphere in \cite{castro2018diagrams}, or surface complements in \cite{kim2020trisections}. 
In both settings, these concrete examples are essential: not only do they build one's intuition, they also provide answers to pending questions (for instance, Meier's work was used later on in \cite{islambouli2021nielsen} to produce trisections of equal genus of the same manifold that actually need stabilisations before becoming isotopic). The purpose of this article is to construct another class of relative trisections: relative trisections of compact fiber bundles over the circle, in echo of Koenig's constructions in the closed setting \cite{koenig2017trisections}.

The interest of our constructions is two-fold. First, we build our relative trisections in a very elementary way, inspired from \cite{koenig2017trisections}, using a sutured Heegaard splitting of the fiber. It is therefore a rather good exercise in manipulating relative trisections, as it shows the complexity resulting from a non-empty boundary and allows to visualize quite precisely what is happening. Second, relative trisections can be glued together, provided that the induced decompositions of the boundaries agree \cite{castro2017trisections}. This gives rise to a (relative) trisection of the resulting space. This fact is used in \cite{castro2017trisections} or \cite{kim2020trisections} to build trisections of closed manifolds by gluing relative trisections of their different pieces. In this article, we will use our relatively trisected fiber bundles over the circle to construct trisections of closed fiber bundles over the circle, $4$--dimensional open books, and spun manifolds.

Let's focus on our bundle: the base is a circle and the fiber is a compact $3$--manifold with non-empty boundary. The main idea is to consider a sutured Heegaard splitting of the fiber that is either preserved or flipped by the monodromy of the bundle (which roughly means that the monodromy sends each compression body of the splitting to itself in the first case, and flips the compression bodies in the second). From this specific sutured Heegaard splitting, we will derive a relative trisection of the bundle, by combining the decomposition of the fiber given by the splitting with a decomposition of the base into intervals, using a technique of tubing. We will also show that there always exists a sutured Heegaard splitting of the fiber that is preserved by the monodromy. Therefore, there always exists a preserved sutured Heegaard splitting from which to derive a relative trisection; however, we also consider --- and actually start with --- the case of a flipped splitting because it is simpler and produces a lower genus trisection. The boundary of the fiber can be disconnected, but the action of the monodromy on its components complicates the computation of the genus of the relative trisection. These results are condensed in the following theorems.
\begin{Theorem}
\label{th:sut}
Let $M$ be a smooth, compact, oriented, connected $3$--manifold, and $\phi$ a self-diffeomorphism of $M$. Then $M$ admits a sutured Heegaard splitting $M=C_{1} \cup_{S} C_{2}$ that is preserved by $\phi$, i.e. there is an ambient isotopy of $M$ that sends $C_{1}$ to $\phi(C_{1})$ and $C_{2}$ to $\phi(C_{2})$.
\end{Theorem}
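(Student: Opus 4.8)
The plan is to produce the desired splitting by taking an arbitrary one and stabilising it enough times, exploiting that a sufficiently stabilised sutured Heegaard splitting is determined up to ambient isotopy. This is the sutured counterpart of the (classical) fact that every self-diffeomorphism of a \emph{closed} $3$--manifold preserves some Heegaard splitting up to isotopy, a fact whose closed version underlies Koenig's construction in \cite{koenig2017trisections}.

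First I would dispose of $\partial M$. Fix a system of sutures $\gamma$ on $\partial M$ that $\phi|_{\partial M}$ preserves up to isotopy; for instance one may take $\gamma$ to consist of a small inessential circle near a chosen point of each component of $\partial M$, with induced decomposition $\partial M \setminus \gamma = R_{+}\sqcup R_{-}$, since any diffeomorphism of $\partial M$ carries such a $\gamma$ to an isotopic system and respects the labelling $R_{\pm}$. After an initial ambient isotopy of $\phi$ we may then assume $\phi(\gamma)=\gamma$ and $\phi(R_{\pm})=R_{\pm}$, so that $\phi$ acts on the set of sutured Heegaard splittings of the \emph{one} fixed sutured manifold $(M,\gamma)$. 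This is essentially the only place where the flexibility of the statement (it does not prescribe the sutures) is used, and the only point requiring care is that the chosen $\gamma$ be admissible as a sutured structure.

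Next, choose any sutured Heegaard splitting $\mathcal H=(M=C_{1}\cup_{S}C_{2})$ of $(M,\gamma)$; these exist by a standard Morse-theoretic construction adapted to $(\partial M,\gamma)$, using a Morse function on $M$ with index--$1$ critical points below, and index--$2$ ones above, a regular level. Then $\phi(\mathcal H)=(M=\phi(C_{1})\cup_{\phi(S)}\phi(C_{2}))$ is again a sutured Heegaard splitting of the same $(M,\gamma)$, by the previous step. Now invoke the sutured version of the Reidemeister--Singer theorem (a consequence of relative Cerf theory, as in the uniqueness arguments for relative trisections): $\mathcal H$ and $\phi(\mathcal H)$ admit a common stabilisation. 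Since a stabilisation strictly raises the genus of the splitting surface whereas $\phi$ preserves it, the number of stabilisations agrees on the two sides, so for some $p\ge 0$ the $p$--fold stabilisation $\mathcal H^{(p)}$ of $\mathcal H$ is ambient isotopic to the $p$--fold stabilisation of $\phi(\mathcal H)$.

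Finally, I would use that the $p$--fold stabilisation of a sutured Heegaard splitting is well defined up to ambient isotopy: a stabilisation tubes $S$ along a trivially embedded arc in the interior of $M$, all such arcs are isotopic, so the operation is independent of where it is performed and in particular commutes with diffeomorphisms of $M$. Hence $\phi(\mathcal H^{(p)})$ is a $p$--fold stabilisation of $\phi(\mathcal H)$, so it is ambient isotopic to the $p$--fold stabilisation of $\phi(\mathcal H)$, which in turn is ambient isotopic to $\mathcal H^{(p)}$. Composing these ambient isotopies with the one relating $\phi$ to its boundary-adapted representative yields an ambient isotopy of $M$ carrying $C_{1}^{(p)}$ to $\phi(C_{1}^{(p)})$ and $C_{2}^{(p)}$ to $\phi(C_{2}^{(p)})$, so $\mathcal H^{(p)}$ is the required splitting. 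The main obstacle is securing the sutured Reidemeister--Singer input in the needed generality (equivalently, controlling a generic path of relative Morse functions joining $\mathcal H$ to $\phi(\mathcal H)$) while keeping the boundary fixed throughout; granted that, the rest is a short formal manipulation.
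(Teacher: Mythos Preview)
Your proposal is correct and follows essentially the same strategy as the paper: choose a $\phi$-invariant sutured decomposition of $\partial M$ consisting of a disk on each boundary component, pick any compatible sutured Heegaard splitting, invoke a sutured Reidemeister--Singer theorem to make it isotopic (after equally many stabilisations) to its $\phi$-image, and conclude using that stabilisation commutes with $\phi$. The paper carries out your ``main obstacle'' --- the sutured Reidemeister--Singer statement --- explicitly via compatible Morse functions and relative Cerf theory (its Propositions~3.1--3.4), but the overall argument is the same.
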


\begin{Remark}
Theorem \ref{th:sut} can be proven using triangulations, but we will stick to the differential category in our proof through Morse functions. 
\end{Remark}

\begin{Theorem}
\label{th:1flip}
Let $M$ be a smooth, compact, oriented, connected $3$--manifold and $\phi$ an orientation-preserving self-diffeomorphism of $M$. Let $X$ be the smooth oriented bundle over the circle with fiber $M$ and monodromy $\phi$. Suppose that $M$ admits a sutured Heegaard splitting $M=C_{1} \cup_{S} C_{2}$ that is flipped by $\phi$, with $S$ a compact surface with $b$ boundary components and Euler characteristic $\chi(S)$. Then $X$ admits a relative trisection $X=X_{1} \cup X_{2} \cup X_{3}$, where the Euler characteristic of $X_{1} \cap X_{2} \cap X_{3}$ is $3(\chi(S) -b)$. 
\end{Theorem}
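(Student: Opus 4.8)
The plan is to build the three pieces $X_1,X_2,X_3$ by hand, adapting Koenig's interval--tubing construction \cite{koenig2017trisections} to the sutured/relative setting, and then to check the relative--trisection axioms and read off $\chi(X_1\cap X_2\cap X_3)$. First I would normalise $\phi$: up to isotopy we may assume $\phi$ honestly realises the flip, i.e.\ $\phi(C_1)=C_2$, $\phi(C_2)=C_1$, $\phi(S)=S$, and on the boundary $\phi$ interchanges the two subsurfaces $R_+\subset\partial C_1$ and $R_-\subset\partial C_2$ cut off by the suture $\gamma=\partial S$, with $\phi(\gamma)=\gamma$. Since replacing $\phi$ by an isotopic diffeomorphism changes $X$ only up to diffeomorphism, we may take $X=M\times[0,1]/\!\sim$, $(x,1)\sim(\phi(x),0)$, with bundle projection $\pi\colon X\to S^1=\mathbb{R}/\mathbb{Z}$ and the flip concentrated near a single fibre.

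Next I would fix a sutured Morse function $f\colon M\to[0,3]$ adapted to the splitting, so that $C_1=f^{-1}([0,\tfrac32])$, $C_2=f^{-1}([\tfrac32,3])$ and $S=f^{-1}(\tfrac32)$; because $\phi$ flips the splitting we may also arrange $f\circ\phi=3-f$. Cut $S^1$ into three closed arcs $I_1,I_2,I_3$ meeting cyclically at three points, with the flip occurring in the interior of one of them, so that $\pi^{-1}(I_j)\cong M\times I_j=(C_1\cup_S C_2)\times I_j$. The three pieces are obtained by regrouping these product regions: each $X_j$ is a union of blocks of the form $C_1\times(\text{sub-arc})$ and $C_2\times(\text{sub-arc})$, the relabelling being dictated by $\phi$ turning ``$C_1$--blocks'' into ``$C_2$--blocks'' as one crosses the distinguished fibre, enlarged by attaching four--dimensional tubes along collars of $S$ lying over the three triple points. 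The tubes are chosen precisely so that each $X_j$ becomes a four--dimensional $1$--handlebody and each pairwise intersection $X_i\cap X_j$ becomes a three--dimensional handlebody; this is where one uses that $C_1,C_2$ are compression bodies with common positive boundary $S$.

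One then has to check that this decomposition restricts on $\partial X$ to an open book and that the resulting decomposition of $\partial X$ has the form demanded by the definition of a relative trisection. Here the key input is that $\partial X$ is the mapping torus of $\phi|_{\partial M}$ and that, because $\phi$ swaps $R_+$ and $R_-$, this mapping torus carries an open book with page $R_+$ and monodromy $\phi^{2}|_{R_+}$ (note that $\phi^{2}(R_+)=R_+$); the sutured Heegaard data supplies exactly the factorisation of this open book that the axioms require, and one checks in addition that the tubes meet $\partial X$ in the expected way. I expect this boundary bookkeeping to be the main obstacle: the non--empty boundary forces one to keep track of the suture $\gamma$, the subsurfaces $R_\pm$, and the action of $\phi$ on the components of $\partial M$ simultaneously, which is exactly the complication absent from Koenig's closed construction.

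Finally, $\Sigma:=X_1\cap X_2\cap X_3$ is assembled from three copies of $S$, one over each triple point of the base, glued together along the collars of the tubes; one band is attached for each pair consisting of a triple point and a suture circle, that is, $3b$ bands altogether, and since attaching a band drops the Euler characteristic by $1$ this gives $\chi(\Sigma)=3\chi(S)-3b=3(\chi(S)-b)$. This count is insensitive to how $\phi$ permutes the components of $\partial M$: that permutation changes the genus of $\Sigma$ and the number of its boundary components individually, but not their combination, which is why the statement is phrased in terms of $\chi(S)$ and $b$ alone.
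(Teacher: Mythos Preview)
Your approach is essentially the paper's: split the base circle into three arcs, take the initial pieces to be $C_i\times(\text{arc})$, then redistribute $3b$ tubes running along collars of $\partial S$ to connect the triple intersection, and read off $\chi(\Sigma)=3\chi(S)-3b$ from the band count. The Euler--characteristic argument at the end is exactly the one in the paper.

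One point needs correcting. You assert that the induced open book on $\partial X$ has page $R_+=\partial_-C_1$, but this is not what the construction produces. The tubes must meet $\partial X$ (each $3$--ball $B_t$ intersects $\partial M\times\{t\}$ in a disk), so the page $(X_i\cap X_j)\cap\partial X$ is not $\partial_-C_1$ alone but $\partial_-C_1$ together with $b$ bands coming from the tubes. If you route the tubes purely in the interior of $M$, the pages $(X_i\cap X_j)\cap\partial X$ are disconnected and the relative--trisection axioms fail; the paper is explicit about placing each tube so that it touches a component of $\partial S$ and hence $\partial M$. So the ``boundary bookkeeping'' you flag as the main obstacle is slightly more delicate than your sketch indicates: the page is $\partial_-C_1$ band--summed along the tubes, and checking that all three pages are diffeomorphic uses that the flip forces $\partial_-C_1\simeq\partial_-C_2$. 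None of this affects the Euler--characteristic statement, which, as you correctly observe, is insensitive to these subtleties.
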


\begin{Theorem}
\label{th:2pres}
Let $M$ be a smooth, compact, oriented, connected $3$--manifold and $\phi$ an orientation-preserving self-diffeomorphism of $M$. Let $X$ be the smooth oriented bundle over the circle with fiber $M$ and monodromy $\phi$. Then $M$ admits a sutured Heegaard splitting that is preserved by $\phi$. Given such a splitting, we denote by $S$ its Heegaard surface, with $b$ boundary components and Euler characteristic $\chi(S)$. Then $X$ admits a relative trisection $X=X_{1} \cup X_{2} \cup X_{3}$, where the Euler characteristic of $X_{1} \cap X_{2} \cap X_{3}$ is $6(\chi(S) -b)$. 
\end{Theorem}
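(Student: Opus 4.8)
The plan is to first invoke Theorem~\ref{th:sut} to produce a sutured Heegaard splitting $M = C_1 \cup_S C_2$ that is preserved by $\phi$ up to ambient isotopy; after composing $\phi$ with that isotopy we may assume $\phi(C_1) = C_1$ and $\phi(C_2) = C_2$ on the nose. The key idea is then to reduce to the flipped case treated in Theorem~\ref{th:1flip} by passing to a double cover of the base circle. Concretely, cut the mapping-torus $X$ open along two fibers, obtaining two copies of $M \times I$, and reglue so that the ``doubled'' monodromy is realized by $\phi$ composed with a fixed orientation-preserving involution that swaps $C_1$ and $C_2$ — i.e. I want to exhibit a sutured Heegaard splitting of $M$ (possibly a stabilization of the original one, or the ``connected-sum-with-its-mirror'' splitting) on which the relevant monodromy acts by a flip, so that Theorem~\ref{th:1flip} applies directly.

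\medskip

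\textbf{Step-by-step outline.} First, fix the preserved splitting and arrange $\phi(C_i)=C_i$. Second, build from $C_1 \cup_S C_2$ a new splitting to which a flip argument can be applied: the natural candidate is to view one ``period'' of the bundle as built from two halves — over one half-interval the relative-trisection pieces will be modeled on $C_1$-type handlebodies, over the other on $C_2$-type handlebodies — and to use the tubing technique already developed in the proof of Theorem~\ref{th:1flip} to assemble $X_1, X_2, X_3$. Third, decompose the base $S^1$ into three arcs and, over each arc, take the product of the fiber decomposition with the arc, then round corners / tube along the boundary exactly as in the flipped case but now carrying the extra information that $\phi$ preserves rather than swaps the two sides; this forces each of the three regions $X_i$ to absorb a ``copy'' of both $C_1$-data and $C_2$-data, which is the source of the factor-of-two discrepancy in the genus count. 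Fourth, verify that $X_1, X_2, X_3$ are each $4$--dimensional $1$--handlebodies, that pairwise intersections are $3$--dimensional handlebodies, and that the triple intersection $X_1 \cap X_2 \cap X_3$ is a compact surface $\Sigma$ with the asserted Euler characteristic; the doubling is what turns $3(\chi(S)-b)$ into $6(\chi(S)-b)$. Fifth, check the boundary condition: the induced open-book / relative-trisection structure on $\partial X$ must be standard, which follows from the sutured structure of $S$ together with how $\phi$ acts on $\partial M$.

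\medskip

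\textbf{The hard part} will be the third and fourth steps: showing that the ``preserved'' gluing genuinely doubles the central surface and that, after corner-rounding and tubing, the three pieces are honest $1$--handlebodies with the right triple intersection, rather than something with extra handles coming from the monodromy. In the flipped case the monodromy ``mixes'' $C_1$ and $C_2$ and one period of the bundle already sees both compression bodies once; in the preserved case each of $C_1$ and $C_2$ must be traversed within a single period, so one needs to run essentially the flipped-case construction on the \emph{double} of the situation — equivalently, pull back the bundle along the connected double cover $S^1 \to S^1$, apply Theorem~\ref{th:1flip} to a flipped splitting of the pulled-back (unchanged) fiber, and then descend the trisection. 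Making that descent precise — checking the deck transformation respects the trisection and that quotienting does not break the handlebody structure — is where the real work lies. A secondary subtlety, flagged already in the introduction, is bookkeeping when $\partial M$ is disconnected and $\phi$ permutes its components: the Euler-characteristic formula $6(\chi(S)-b)$ is insensitive to this permutation, but verifying the boundary open-book is still standard requires tracking the $\phi$-orbits of sutures, so I would isolate that as a lemma before assembling the global picture.
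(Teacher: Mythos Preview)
Your intuition that the preserved case should be a ``doubled'' version of the flipped case is exactly right, but the mechanism you propose for realising the doubling does not work. The double-cover reduction is the main gap: if $\phi$ preserves the splitting $C_1\cup_S C_2$, then the bundle pulled back along the connected double cover $S^1\to S^1$ has monodromy $\phi^2$, which still \emph{preserves} the splitting rather than flipping it, so Theorem~\ref{th:1flip} never becomes applicable. Your alternative --- composing $\phi$ with an orientation-preserving involution $\sigma$ of $M$ swapping $C_1$ and $C_2$ --- fails for a structural reason: in the preserved case the splitting need not be balanced (one may have $\partial_-C_1\not\simeq\partial_-C_2$), so in general no such $\sigma$ exists. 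Even when it does, there is no reason for the trisection upstairs to be $\mathbb{Z}/2$--equivariant, and quotients of $1$--handlebodies by free involutions are typically not $1$--handlebodies.

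The paper's construction is more elementary and sidesteps all of this. Instead of three subintervals of $S^1$, it uses \emph{six}: this allows each $X_k$ (before tubing) to be the disjoint union of one piece $C_1\times I$ and one piece $C_2\times I$, which are then connected by $2b$ tubes running along $S\times S^1$ exactly as in Case~1, giving an honest $4$--dimensional $1$--handlebody. The triple intersection is now six copies of $S$ joined cyclically by $6b$ bands, whence $\chi=6(\chi(S)-b)$; no quotient or descent is needed. The reason six (rather than three) subintervals are forced is precisely the possible imbalance $\partial_-C_1\not\simeq\partial_-C_2$: with only three intervals one of the pages $(X_i\cap X_j)\cap\partial X$ would be built from $\partial_-C_1$ alone and another from $\partial_-C_2$ alone, violating the requirement that all pages be diffeomorphic. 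Your three-arc outline in Step~3 runs into exactly this obstruction.
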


The paper is organised as follows. Section 2 reviews the notions of sutured Heegaard splittings of compact $3$--manifolds, relative trisections of compact $4$--manifolds, as well as the associated diagrams. For a more thorough approach, the reader may refer to  \cite{castro2018diagrams} and \cite{castro2018trisections}. In Section 3, we make a detour in dimension $3$ to prove Theorem \ref{th:sut}, which shows that we can always find a sutured Heegaard splitting of the fiber that is preserved, up to isotopy, by the monodromy. In section $4$, we build our relative trisections of fiber bundles over the circle, provided that the fiber admits a sutured Heegaard splitting either preserved or flipped by the monodromy of the bundle. We compute the parameters of these relative trisections, giving more precise versions of Theorems \ref{th:1flip} and \ref{th:2pres}. In Section 5, we describe the corresponding diagrams. In Section 6, we use a gluing technique to construct trisection diagrams of some classes of closed manifolds from our relative trisection diagrams. First, we recover the trisected closed fiber bundle over the circle in \cite{koenig2017trisections}, then the trisected spun manifolds in \cite{meier2017trisections}; finally we provide a new class of trisections: trisected $4$--dimensional open-books, whose fiber is a $3$--manifold with boundary a torus. More specifically, we explicitly derive a $(6g+4)$--trisection diagram for the open-book from a genus $g$ sutured Heegaard diagram of the fiber.

\section{BACKGROUND}
We denote by $F_{g,b}$ a compact connected surface of genus $g$ with $b$ boundary components. We say that two decompositions of a manifold $M= \cup M_{i}$ and $M= \cup M'_{i}$ are isotopic if there is an ambiant isotopy of $M$ taking each $M_{i}$ to $M'_{i}$.
\begin{Definition}
\label{def:comp}
Let $S$ be a compact connected surface with non-empty boundary. Take the product $S \times [ 0 , 1 ]$ and add $3$--dimensional $2$--handles along a family $\alpha$ of disjoint, non trivial simple closed curves on $S \times \lbrace 0 \rbrace$. Call $C$ the resulting compact, connected $3$--manifold. We say that $C$ is a $\emph{compression body}$ and define:
\begin{itemize}
\item its positive boundary $\partial_{+}C$ as $S \times \lbrace 1 \rbrace$;
\item its negative boundary $\partial_{-}C$ as $\partial C \setminus \big( (\partial S \times \left] 0,1 \right[) \cup \partial_{+}C \big)$.
\end{itemize} 
We say that the set $\alpha$ is a $\emph{cut system}$ of $S$ corresponding to $C$.
\end{Definition}
Thus $\partial_{-}C$ is obtained by compressing $\partial_{+}C$ along the $\alpha$ curves. Throughout this article, we will only consider negative boundaries with no closed components. In this case, a compression body is a standard handlebody with a fixed decomposition of its boundary.
\begin{Remark}
We can also reverse the construction and obtain a compression body from its negative boundary $\partial_{-}C=F$, a compact (possibly disconnected) surface, by taking the product $F \times [ 0 , 1 ]$ and adding $3$--dimensional $1$--handles along $F \times \lbrace 1 \rbrace$ so that the resulting $3$--manifold $C$ is connected. In this perspective the negative boundary is $\partial_{-}C = F \times \lbrace 0 \rbrace$ and the positive boundary $\partial_{+}C$ is set as the closure of $\partial C \setminus \big( (F \times \lbrace 0 \rbrace) \cup (\partial F \times [0,1]) \big)$ (see Figure \ref{compbdypic}).
\end{Remark}
\begin{figure}[h!]
\[
\begin{tikzpicture}[scale=0.6]
\node (mypic) at (0,0) {\includegraphics[scale=0.8]{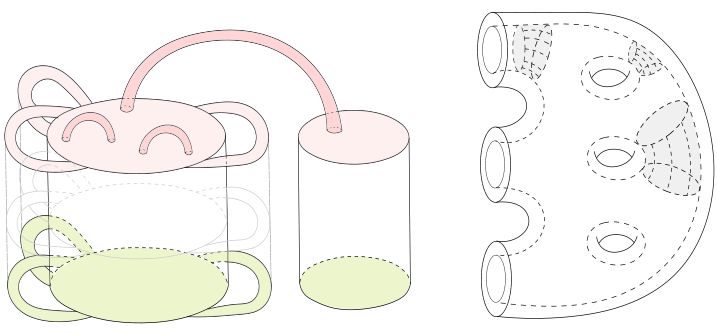}};
\end{tikzpicture}
\]
\caption{Two representations of a compression body\\{\footnotesize  Left: negative boundary in green and positive boundary in pink; $1$--handles in dark pink. Right: positive boundary in plain and negative boundary in dashed; $2$--handles in grey}}
\label{compbdypic}
\end{figure}

We will use handlebodies with a compression body structure to produce decompositions of compact $3$--manifolds with non-empty boundary. For that purpose, we need the concept of sutured manifold, introduced by Gabai in \cite{gabai1983foliations}. In what follows, we just keep the necessary amount of theory.   

\begin{Definition}
\label{def:shs}
Let $M$ be a compact, smooth, connected $3$--manifold with non-empty boundary. 
A sutured Heegaard splitting of $M$ is a decomposition $M=C_{1} \cup_{S} C_{2}$, where:
\begin{itemize}
\item the intersection $S=C_{1} \cap C_{2}$ is a compact connected surface with non-empty boundary;
\item $C_{1}$ and $C_{2}$ are handlebodies with a compression body structure, such that $\partial_{+}C_{1} = - \partial_{+}C_{2} = S$. 
\end{itemize}
 We say that a sutured Heegaard splitting of $M$ is \textit{balanced} if $\partial_{-}C_{1} \simeq \partial_{-}C_{2}$. 
\end{Definition}
\begin{Remark}
A sutured Heegaard splitting of $M$ induces a decomposition of $\partial M$ into two compact surfaces with non-empty boundary $S_{1}$ and $S_{2}$ and a finite set of annuli $\cup_{i} A_{i}$ joining each component of $\partial S_{1}$ to a component of $\partial S_{2}$. We call such a decomposition a $\emph{sutured decomposition}$ of $\partial M$.
\end{Remark}
\begin{Remark}
Necessarily $\cup_{i} A_{i} \simeq ( \partial S_{1} \times [0,1] ) \simeq ( \partial S_{2} \times [0,1] )$. A curve on a $A_{i}$ isotopic to a component of $\partial S_{1}$ is called a suture of the decomposition. Sometimes in this article, we will not need to differentiate a sutured decomposition of a surface $S$ from a decomposition into two compact surfaces glued along their common boundary. When that is the case, we will also refer to the latter as a sutured decomposition of $S$.
\end{Remark}
We now move to dimension 4. We will just briefly define relative trisections: for proofs and more details, we refer to \cite{castro2016relative}, \cite{castro2018diagrams} and \cite{castro2018trisections}.
\begin{Definition}
\label{def:relative}
A relative trisection of a compact, connected, smooth $4$--manifold $X$ is a decomposition $X=X_{1} \cup X_{2} \cup X_{3}$ such that:
\begin{itemize}
\item each $X_{i}$ is a $4$--dimensional handlebody, i.e. $X_{i} \simeq \natural^{k_{i}} (S^{1} \times B^{3})$ for some $k_{i}$;
\item the triple intersection $X_{1} \cap X_{2} \cap X_{3} = S$ is a compact connected surface with boundary;
\item each intersection $\big( X_{i} \cap X_{j} \big) \cap \partial X$ is diffeomorphic to a given compact surface~$P$;
\item each double intersection $X_{i} \cap X_{j} = (\partial X_{i} \cap \partial X_{j})$ is a handlebody $C_{i,j}$ with a compression body structure defined by $\partial_{+}C_{i,j} = S$ and $\partial_{-}C_{i,j} = \big( X_{i} \cap X_{j} \big) \cap \partial X \simeq P$;
\item each $X_{i} \cap \partial X$ is diffeomorphic to $P \times I$.
\end{itemize}
\end{Definition}  
Definition \ref{def:relative} induces a decomposition of $\partial X$ into:
\begin{itemize}
\item three sets diffeomorphic to $P \times I$, glued together to form a $P$--fiber bundle over the circle;
\item solid tori $\partial P \times I \times I$ glued trivially so as to fill each boundary component of the bundle. 
\end{itemize}
This is an open-book decomposition of $\partial X$, with page $P$ and binding $\partial P \simeq \partial S$.

We call the triple intersection the $\emph{central surface}$ of the trisection. If the boun\-dary of $X$ is connected, a $(g,k_{1},k_{2},k_{3},p,b)$--relative trisection is a relative trisection involving a central surface of genus $g$ with $b$ boundary components, $4$--dimensional handlebodies of genus $k_{i}$ and pages of genus $p$ (if all the $k_{i}$'s are equal to $k$, we will simply write a $(g,k,p,b)$--relative trisection). If $\partial X$ is not connected, then there are as many pages as components in $\partial X$. In this case we will use multi-indices as $p$ and $b$, and notice that the sum of the indices in $b$ must be equal to the number of boundary components of the central surface.  

One key feature of sutured Heegaard splittings and relative trisections is that they can both be described by diagrams, just as Heegaard splittings and trisections. 
\begin{Definition}
A sutured Heegaard diagram is a triple $(S, \alpha, \beta)$, where $S$ is a compact connected surface with boundary and $\alpha$ and $\beta$ are two sets of disjoint, non-trivial simple closed curves in $Int(S)$, such that compressing $S$ along each set does not produce any closed component.   
\end{Definition} 
Each set thus corresponds to a cut system for some compression body, and the last condition ensures that this compression body is also a handlebody. Therefore, a sutured Heegaard diagram defines a $3$--manifold with a sutured Heegaard splitting. We can reverse the process and obtain a sutured Heegaard diagram from a sutured Heegaard splitting, by setting $S$ as the intersection $C_{1} \cap C_{2}$, the $\alpha$'s (resp. the $\beta$'s) as a cut system defining $C_{1}$ (resp. $C_{2}$). Actually, there is a one-to-one correspondence between sutured Heegaard diagrams (up to diffeomorphism of the surface and handleslides of curves within each cut system) and sutured Heegaard splittings (up to diffeomorphism). 
\begin{Example} 
Given a Heegaard diagram $(\Sigma, \alpha, \beta )$ (associated to a closed $3$--manifold $M$), consider the sutured Heegaard diagram $(S, \alpha, \beta)$, where $S$ is $\Sigma$ minus the interior of $b$ disks disjoint from the $\alpha$ and $\beta$ curves. If $b=1$, we obtain a sutured Heegaard diagram associated to $M$ minus the interior of a $3$--ball; if $b=2$, we obtain a sutured Heegaard diagram associated to $M$ minus the interior of a solid torus. If $b=2$ and we add one curve parallel to a boundary component to the $\alpha$'s or the $\beta$'s, we obtain again $M$ minus the interior of a $3$--ball; if $b=2$ and we add to each set a curve parallel to a boundary component, the result is $M$ minus the interior of two disjoint $3$--balls. 
\end{Example}
\begin{Definition}
A standard sutured Heegaard diagram is the connected sum of standard genus 1 Heegaard diagrams of $S^{3}$ and $S^{1} \times S^{2}$ and a compact surface with non-empty boundary $F_{p,b}$.
\end{Definition}
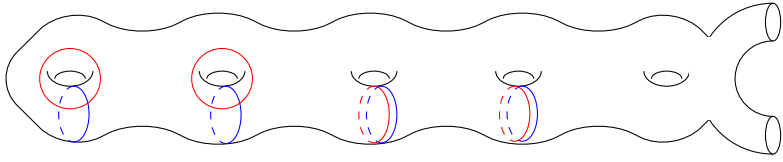
\begin{figure}[h!]
\[
\begin{tikzpicture}
\draw[rounded corners=0.5cm] (0,0) -- (1,1) -- (2,0.5) -- (3,1) -- (4,0.5) -- (5,1) -- (6,0.5) -- (7,1) -- (8,0.5) -- (9,1) -- (10,0) -- (9,-1) -- (8,-0.5) -- (7,-1) -- (6,-0.5) -- (5,-1) -- (4,-0.5) -- (3,-1) -- (2,-0.5) -- (1,-1) -- cycle ;
\draw (0.75,0.1) arc (-180:0:0.3cm and 0.2cm);
\draw (0.85,0) arc (180:0:0.2cm and 0.1cm);
\draw (2.75,0.1) arc (-180:0:0.3cm and 0.2cm);
\draw (2.85,0) arc (180:0:0.2cm and 0.1cm);
\draw (4.75,0.1) arc (-180:0:0.3cm and 0.2cm);
\draw (4.85,0) arc (180:0:0.2cm and 0.1cm);
\draw (6.65,0.1) arc (-180:0:0.3cm and 0.2cm);
\draw (6.75,0) arc (180:0:0.2cm and 0.1cm);
\draw [blue] (1.1,-0.1) arc (90:-90:0.2cm and 0.37cm);
\draw [blue, dashed] (1.1,-0.1) arc (90:270:0.2cm and 0.37cm);
\draw [blue] (3.1,-0.1) arc (90:-90:0.2cm and 0.38cm);
\draw [blue, dashed] (3.1,-0.1) arc (90:270:0.2cm and 0.38cm);
\draw [blue] (5.15,-0.1) arc (90:-90:0.2cm and 0.375cm);
\draw [blue, dashed] (5.15,-0.1) arc (90:270:0.2cm and 0.375cm);
\draw [red] (5.05,-0.1) arc (90:-90:0.2cm and 0.375cm);
\draw [red, dashed] (5.05,-0.1) arc (90:270:0.2cm and 0.375cm);
\draw [blue] (7,-0.1) arc (90:-90:0.2cm and 0.365cm);
\draw [blue, dashed] (7,-0.1) arc (90:270:0.2cm and 0.365cm);
\draw [red] (6.9,-0.1) arc (90:-90:0.2cm and 0.365cm);
\draw [red, dashed] (6.9,-0.1) arc (90:270:0.2cm and 0.365cm);
\draw [red] (1.05,0) circle (0.4cm);
\draw [red] (3.05,0) circle (0.4cm);
\draw (8.6,0.1) arc (-180:0:0.3cm and 0.2cm);
\draw (8.7,0) arc (180:0:0.2cm and 0.1cm);
\draw (10.3,0) circle (1cm);
\fill[white] (9.2,-0.55) -- (10.5,-0.55) -- (10.5,0.55) -- (9.2,0.55) -- (9.2,-0.55) -- cycle;
\draw (10.3,0) circle (0.5cm);
\fill[white] (10.3,-1.5) -- (12,-1.5) -- (12,1.5) -- (10.3,1.5) -- (10.3,-1.5) -- cycle;
\draw (10.3,1) arc (90:450:0.1cm and 0.25cm);
\draw (10.3,-0.5) arc (90:450:0.1cm and 0.25cm);
\end{tikzpicture}
\]
\caption{A standard sutured Heegaard diagram} 
\label{fig:hdYP}
\end{figure} 
\begin{Definition}
A relative trisection diagram is a $4$--tuple $\lbrace S, \alpha, \beta, \gamma \rbrace$, where each triple $\lbrace S, \alpha, \beta \rbrace$, $\lbrace S, \beta, \gamma \rbrace$, $\lbrace S, \alpha, \gamma \rbrace$ is handleslide-diffeomorphic to a standard sutured Heegaard diagram involving the same compact surface $P \simeq F_{p,b}$. 
\end{Definition}
There is a one-to-one correspondence between relative trisection diagrams (up to handleslides of curves within each family $\alpha$, $\beta$ and $\gamma$ and diffeomorphism of $S$) and relatively trisected $4$--manifolds (up to diffeomorphism), see \cite{castro2018diagrams}. If a relative trisection diagram corresponds to a relative trisection $X=X_{1} \cup X_{2} \cup X_{3}$, each sutured diagram consisting of the trisection surface and two sets of curves corresponds to a sutured Heegaard diagram of one $\big( X_{i-1} \cap X_{i} \big) \cup \big( X_{i} \cap X_{i+1} \big)$.

\section{A monodromy-preserved decomposition of M}  
We want to show that no assumption has to be made on $M$ to find a sutured Heegaard splitting that is preserved by the monodromy. This is the content of Theorem \ref{th:sut}, which we are now going to prove. 
To do so, we need some preliminary results.
\begin{Proposition}
\label{prop:exsplit}
Given a sutured decomposition of $\partial M$, there exists a sutured Heegaard splitting $M=C_{1} \cup_{S} C_{2}$ inducing it.
\end{Proposition}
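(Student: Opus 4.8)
The plan is to realise the splitting as the two halves of a self-indexing Morse function on $M$ adapted to the given sutured decomposition, and then to remove the extremal critical points.

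First I would record the given sutured decomposition as $\partial M = S_1 \cup (\bigcup_i A_i) \cup S_2$ and view $M$ as a cobordism-with-corners from $S_1$ to $S_2$ whose vertical boundary is $\bigcup_i A_i \simeq \partial S_1 \times I$, with corner locus the sutures $\partial S_1 = \partial S_2$. Choosing collars of $S_1$, of $S_2$, and of the sutures that agree at the corners, I would build a smooth function $f\colon M \to [0,1]$ with $f^{-1}(0)=S_1$, $f^{-1}(1)=S_2$, with $f$ equal to the $I$-coordinate on each annulus $A_i\simeq \partial S_1\times[0,1]$, and with no critical points in a neighbourhood of $\partial M$ (gradient inward along $S_1$, outward along $S_2$, tangent to the $A_i$). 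A perturbation supported in the interior makes $f$ Morse there; every regular level $f^{-1}(t)$, $t\in(0,1)$, is then a compact surface whose boundary is $\bigcup_i(\partial S_1\times\{t\})$, equal to $S_1$ near $0$ and to $S_2$ near $1$.

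Next I would reorder the interior critical points (the Morse rearrangement theorem, applied away from $\partial M$) so that all critical points of index $\le 1$ lie below level $\tfrac12$ and all of index $\ge 2$ lie above it, and then cancel critical points of index $0$ against index $1$ and of index $3$ against index $2$: because $M$ is connected and $S_1,S_2$ are non-empty (a sutured decomposition has $\partial S_1=\partial S_2\ne\emptyset$), every local minimum can be joined to $S_1\times\{0\}$ through a single index-$1$ point and cancelled, and dually at the top. Introducing a few cancelling $(1,2)$-handle pairs first if needed, I may also assume the $1$-handles below level $\tfrac12$ connect the components of $S_1\times[0,\tfrac12]$, and dually above; then $S:=f^{-1}(\tfrac12)$ is connected. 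Setting $C_1=f^{-1}([0,\tfrac12])$ and $C_2=f^{-1}([\tfrac12,1])$, the manifold $C_1$ is $S_1\times[0,\tfrac12]$ with $3$-dimensional $1$-handles attached along $S_1\times\{\tfrac12\}$, i.e. a compression body with $\partial_-C_1=S_1$ and $\partial_+C_1=S$ (a handlebody, since $S_1$ has no closed components), and symmetrically for $C_2$; since $f$ was the $I$-coordinate on each $A_i$, the annular part of $\partial C_1\cup\partial C_2$ inside $\partial M$ is exactly $\bigcup_i A_i$. Hence $M=C_1\cup_S C_2$ is a sutured Heegaard splitting and the sutured decomposition of $\partial M$ it induces is the given one.

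I expect the main obstacle to be the first step: producing the boundary-adapted Morse function, i.e. handling the corners along the sutures and certifying that $f$ has no critical points near $\partial M$ while restricting to the interval coordinate on each $A_i$. Once that function is in hand, the reordering, the cancellation of index-$0$ and index-$3$ critical points, and the (optional) stabilisations needed to make the central surface connected are all routine and, crucially, can be kept supported in the interior of $M$, so that the prescribed pieces $S_1$, $S_2$, $A_i$ of the sutured decomposition are untouched throughout.
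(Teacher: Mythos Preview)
Your approach is correct and essentially identical to the paper's: both build a Morse function on $M$ adapted to the sutured boundary data, self-index it away from $\partial M$, and read off $C_1$ and $C_2$ as the sub- and super-level sets of the middle value. The paper differs only in that it actually writes down the collar function you flag as the main obstacle (explicit interpolants $g,h$ on $\partial S_1\times[0,1]\times[0,\tfrac12]$ with $-h'<2g'$ to exclude critical points), and it does not bother to cancel index-$0/3$ points nor to stabilise: since $2$- and $3$-handles never merge components, connectedness of $M$ already forces the $1$-handles to connect all pieces of $S_1\times I$ and all $0$-handles, so $C_1$ is a handlebody and $S$ is connected without your extra $(1,2)$-pairs.
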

\begin{proof} 
We will use a handle decomposition given by a Morse function. We denote the sutured decomposition of the boundary of $M$ by $\partial M \simeq S_{1} \cup (\partial S_{1} \times [0,1] ) \cup S_{2}$ and consider a collar neighbourhood $\nu$ of $\partial M$, $\nu = \psi \big((S_{1} \cup (\partial S_{1} \times [0,1] ) \cup S_{2}) \times [0,1/2]\big)$, with $\psi$ an embedding such that $ \psi \big( (S_{1} \cup (\partial S_{1} \times [0,1] ) \cup S_{2}) \times \lbrace 0 \rbrace \big) = \partial M$. We define the following smooth function $f$ on $\partial M$:
\begin{itemize}
\item on $\psi(S_{1} \times \lbrace 0 \rbrace)$, $f \equiv -1$; on $\psi(S_{2} \times \lbrace 0 \rbrace)$, $f \equiv 4$;
\item for $((x,u),0) \in (\partial S_{1} \times [0,1]) \times \lbrace 0 \rbrace$, $f(\psi((x,u),0))=g(u)$, where g is a smooth, strictly increasing function from $[0,1]$ to $[-1,4]$ that verifies: $g(0)=-1$, $g(1)=4$, and $g^{(k)}(0)=g^{(k)}(1)=0$ for $k > 0$. 
\end{itemize}
Then we extend $f$ to a smooth function $\tilde{f}$ on $\nu$ by:
\begin{itemize}
\item $\tilde{f}(\psi(y,t))=t-1$ for $(y,t) \in S_{1} \times [0,1/2]$ and $\tilde{f}(\psi(y,t))=-t+4$ for $(y,t) \in S_{2} \times [0,1/2]$;
\item for $\big( (x,u),t \big) \in (\partial S_{1} \times [0,1]) \times [0,1/2]$, $\tilde{f}\big( \psi((x,u),t) \big) = t h(u) +g(u)$, where $h$ is a smooth strictly decreasing function from $[0,1]$ to $[-1,1]$ such that:
\begin{itemize}
\item $h(0)=1$; $h(1/2)=0$; $h(1)=-1$;
\item $h^{(k)}(0)=h^{(k)}(1)=0$ for $k > 0$;
\item $ - h'(u) < 2 g'(u)$ for $0<u<1$.
\end{itemize}
\end{itemize} 
Then $\tilde{f}$ is a smooth function with no critical point on $\nu$, that sends $\psi(\partial M \times \lbrace 1/2 \rbrace)$ to $[-0.5,3.5]$. As $\psi(\partial M \times \lbrace 1/2 \rbrace)$ is embedded in $M$, we can use slice charts to extend $\tilde{f}$ to $M$ and finally obtain a smooth function on $M$, with no critical points on $\nu$, that sends $M \setminus \nu$ to $[-0.5,3.5]$. We can use a generic Morse approximation of this function, then modify it around its critical points (therefore, away from $\nu$) to obtain a self-indexing Morse function $F$. Consider a critical point of index $0$. It corresponds to adding a $0$--handle, but, as $M$ is connected, there must be a cancelling $1$--handle. The same applies to the components of $F^{-1}(\lbrace -1 \rbrace)=S_{1}$, that must be connected by $1$--handles. Therefore, the preimage $F^{-1}([-1,3/2])=C_{1}$ is diffeomorphic to the connected union of $F^{-1}(\lbrace -1 \rbrace) \times I$ and $1$--handles: it is a handlebody with a compression body structure given by $\partial_{-}C_{1} = S_{1}$ and $\partial_{+}C_{1} =F^{-1}(\lbrace 3/2 \rbrace)=S$, where $S$ is a compact connected surface with boundary. By applying the same argument to $-F$, we see that $C_{2}=F^{-1}([3/2,4])$ is also a handlebody with a compression body structure given by $\partial_{-}C_{2} = S_{2}$ and $\partial_{+}C_{2} =  F^{-1}(\lbrace  3/2 \rbrace)=S$. Therefore $M=C_{1} \cup_{S} C_{2}$ is a sutured Heegaard splitting of $M$, inducing the requested sutured decomposition of $\partial M$. 
\end{proof}
\begin{Remark}
\label{rmkunbalanced}
These arguments are standardly used to prove the handle decomposition theorem and the existence of Heegaard splittings (see for instance \cite{kosinski2013differential}). We simply adapted them to the setting created by a sutured decomposition of the boundary. Notice that the gradient of $f$ can be made parallel to $\partial M$ in $\psi(\partial S_{1} \times [0,1])$ for some metric, which is important to understand that $F^{-1} ([-1,0])$ is a thickening of $S_{1}$ (with perhaps some extra $3$--balls) respecting the sutures (see \cite{borodzik2016morse}). We also refer to \cite{juhasz2006holomorphic}: this latter article actually covers the proof, but we found it useful to give our version, as \cite{juhasz2006holomorphic} focuses on balanced sutured Heegaard diagrams. In this paper, we do not need the full diagrammatic approach of \cite{juhasz2006holomorphic}, but we do need to work with non balanced decompositions.
\end{Remark}
\begin{Definition}
A Morse function $f:M \rightarrow [-1,4]$ is \emph{compatible} with a sutured Heegaard splitting $M=C_{1} \cup_{S} C_{2}$ if:
\begin{itemize}
\item we have $f^{-1}([-1,3/2])=C_{1}$, $f^{-1}([3/2,4])=C_{2}$, and $f^{-1}(3/2)=S$;
\item the critical points of $f$ of index 0 and 1 belong to $C_{1}$ and  the critical points of index 2 and 3 belong to $C_{2}$.
\end{itemize} 
\end{Definition}
Now we prove that \emph{any} sutured Heegaard splitting can be given by a compatible Morse function. 
\begin{Proposition}
\label{prop:morsesut}
If $M=C_{1} \cup_{S} C_{2}$ is a sutured Heegaard splitting, then there is a Morse function compatible with it.
\end{Proposition}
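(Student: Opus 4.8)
The plan is to build $F$ separately over $C_1$ and over $C_2$, each piece adapted to the given compression body structure, and then glue the two along $S$. This is the same circle of ideas as in the proof of Proposition~\ref{prop:exsplit}; the only new point is that we must respect the compression body structures that are already part of the data, rather than manufacture our own.

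First I would handle $C_1$. By the alternative description of a compression body given just after Definition~\ref{def:comp}, $C_1$ is obtained from $\partial_-C_1 \times [0,1]$ by attaching finitely many $3$--dimensional $1$--handles along $\partial_-C_1 \times \lbrace 1\rbrace$; no $0$--handles occur, since $\partial_-C_1$ is non-empty (it is $S$ compressed along interior curves, hence still has non-empty boundary). Feeding this handle decomposition into the standard dictionary between handle decompositions and Morse functions --- adapted to a manifold with boundary and corners exactly as in Proposition~\ref{prop:exsplit} and Remark~\ref{rmkunbalanced} --- produces a Morse function $f_1 \colon C_1 \to [-1,3/2]$ such that $f_1 \equiv -1$ on $\partial_-C_1$, $f_1^{-1}(3/2) = \partial_+C_1 = S$, $f_1$ is strictly increasing with boundary--tangent nowhere--vanishing gradient along the vertical boundary, $f_1$ has exactly one index--$1$ critical point per $1$--handle and no other critical point, and $f_1 = -1 + t$ in a collar of $\partial_-C_1$ and $f_1 = 3/2 + t$ (with $t \le 0$) in a collar of $S$. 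In particular every critical point of $f_1$ has index $1$ and lies in $\mathrm{Int}(C_1)$.

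Applying the same recipe to $C_2$, whose compression body structure has negative boundary $\partial_-C_2$ and positive boundary $-S$, yields $\hat f_2 \colon C_2 \to [-1,3/2]$ with the analogous collar behaviour and only interior index--$1$ critical points. Setting $f_2 := 3 - \hat f_2 \colon C_2 \to [3/2,4]$ we get $f_2^{-1}(3/2) = S$, $f_2 \equiv 4$ on $\partial_-C_2$, $f_2$ strictly decreasing along the vertical boundary, and --- since negating a Morse function on a $3$--manifold turns an index--$k$ critical point into an index--$(3-k)$ one --- every critical point of $f_2$ has index $2$ and lies in $\mathrm{Int}(C_2)$. Now glue: as $S = f_1^{-1}(3/2) = f_2^{-1}(3/2)$ is a common regular level on which both functions have the prescribed collar form, a routine collar--matching argument produces a bicollar $S \times (-\epsilon,\epsilon) \hookrightarrow M$ with $S \times (-\epsilon,0] \subset C_1$ and $S \times [0,\epsilon) \subset C_2$ on which, after a harmless pre-adjustment, $f_1$ and $f_2$ both agree with $(s,t) \mapsto 3/2 + t$. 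Then $F := f_1$ on $C_1$ and $F := f_2$ on $C_2$ is a smooth function $M \to [-1,4]$; it is Morse (its critical set is the disjoint union of those of $f_1$ and $f_2$, none meeting $S$ or $\partial M$), it satisfies $F^{-1}([-1,3/2]) = C_1$, $F^{-1}([3/2,4]) = C_2$ and $F^{-1}(3/2) = S$, and its index $0$ or $1$ critical points (only index $1$ occurs) lie in $C_1$ while its index $2$ or $3$ critical points (only index $2$) lie in $C_2$. Thus $F$ is compatible with the splitting.

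The conceptual input is light: by definition each $C_i$ comes with a handle structure built on its negative boundary from $1$--handles only, which, after turning $C_2$ upside down, produces critical points of the correct indices on the correct sides of the level $S$. I expect the only real work to be the bookkeeping already present in the proof of Proposition~\ref{prop:exsplit}: keeping the collars of $S$, of the two negative boundaries, and of the vertical boundaries mutually compatible near the corners $\partial S$ where $S$, the two vertical boundaries, and the pieces of $\partial M$ all meet, so that the glued $F$ is honestly smooth and has no critical points on $\partial M$.
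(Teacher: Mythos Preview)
Your proposal is correct and follows essentially the same approach as the paper: build a Morse function on each compression body from its $1$--handle structure over $\partial_-C_i\times I$, flip the one on $C_2$ to turn index--$1$ points into index--$2$ points, and glue along $S$. The paper's version differs only cosmetically in that it works first on $V_1=\partial_-C_1\setminus\nu_1$ away from the vertical boundary and then interpolates near $\partial S\times I$ using the explicit model of Proposition~\ref{prop:exsplit}, which is exactly the corner bookkeeping you flag in your last paragraph.
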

\begin{proof}
We set $V_{1}= \partial_{-}C_{1} \setminus \nu_{1}$, where $\nu_{1}$ is a small regular neighbourhood of $\partial (\partial_{-} C_{1})$ in $\partial_{-} C_{1}$. Consider a handle $H$ glued to $V_{1} \times [0,1]$. We write $T$ a regular neighbourhood  of the attaching sphere of $H$ in $V_{1} \times [0,1]$. Then there is a Morse function $G$ on $( V_{1} \times [0,1]) \cup H$ such that $G(x,t)=t$ outside of the subset $H \cup T$, with only one critical point corresponding to the attachment of $H$. Considering the handles to be attached separately, we combine such functions into a Morse function $F$ on the union of $V_{1} \times [0,1]$ and of the $1$--handles of the splitting, with critical points of index $1$ corresponding to the attachment of the $1$--handles. We can apply the same argument to $C_{2}$ considered as the union of $\partial_{-}C_{2} \times I$ and $1$--handles, then take the opposite Morse function to produce a Morse function with critical points corres\-ponding to the $2$--handles, equal to $4$ on the intersection with $\partial M$ and agreeing with $F$ on $S$. Combining the two functions gives a Morse function corresponding to the handle decomposition on $C_{1} \cup C_{2}$ minus a regular neighbourhood of $\partial S \times I$. Finally we interpolate between the values on $\partial V_{1} \times [0,1]$ and the values defined on $\partial(\partial_{-} C_{1})$ for the Morse function of Proposition \ref{prop:exsplit} without creating new critical points, which produces the desired Morse function on $M$. 
\end{proof}
\begin{Proposition}
\label{prop:isot}
If two sutured Heegaard splittings of a manifold $M=C_{1} \cup_{S} C_{2}$ and $M=C_{1}' \cup_{S'} C_{2}'$ induce the same sutured decomposition of $\partial M$, then they become isotopic after a finite sequence of interior stabilisations. 
\end{Proposition}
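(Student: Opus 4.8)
The plan is to run the standard Cerf--theoretic (Reidemeister--Singer) argument, adapted to the sutured setting and carried out relative to a collar of $\partial M$. First I would invoke Proposition \ref{prop:morsesut} to pick Morse functions $f_{0},f_{1}\colon M\to[-1,4]$ compatible with the two splittings. Since the two splittings induce the \emph{same} sutured decomposition of $\partial M$, the collar part of the construction in Propositions \ref{prop:exsplit} and \ref{prop:morsesut} is determined by that boundary data alone, so I can arrange that $f_{0}$ and $f_{1}$ agree on a collar neighbourhood $\nu$ of $\partial M$, have no critical points there, and satisfy $f_{0}^{-1}(3/2)\cap\nu=f_{1}^{-1}(3/2)\cap\nu$. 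Then I would join $f_{0}$ to $f_{1}$ by a smooth path $\{f_{t}\}_{t\in[0,1]}$ that is constant, equal to $f_{0}$, on $\nu$ throughout. Relative Cerf theory makes this path generic: $f_{t}$ is Morse for all but finitely many $t$, and at the exceptional parameters either two critical values cross or a cancelling $(k,k+1)$--pair is born or dies, all of it taking place in the interior $M\setminus\nu$. I record the resulting Cerf graphic in the square $[0,1]_{t}\times[-1,4]$.

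Next I would reorganise the graphic by the usual moves: push all births to the beginning, all deaths to the end, leaving in the middle a path of Morse functions with a fixed critical set whose critical values merely slide past one another. A birth of a $(1,2)$--pair can be slid, by an isotopy of the graphic, so that the two new critical values straddle the level $3/2$ with the index--$1$ point below and the index--$2$ point above; such a straddling birth is exactly an interior stabilisation of the sutured Heegaard splitting (births of $(0,1)$-- or $(2,3)$--type can be traded, or cancelled against a further stabilisation). Hence the ``births'' block realises a sequence of interior stabilisations carrying the splitting of $f_{0}$ to that of some Morse function $\hat f_{0}$, and the ``deaths'' block, read backwards, does the same for $f_{1}$, producing some $\hat f_{1}$. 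It then remains to see that $\hat f_{0}$ and $\hat f_{1}$ give isotopic sutured Heegaard splittings: they are joined by a birth/death--free path of Morse functions, along which the only events are critical--value crossings. A crossing of two critical values lying on the same side of $3/2$ only reorders handle attachments within $C_{1}$ or within $C_{2}$, hence changes the relevant cut system by handleslides, an equivalence already built into the correspondence between sutured Heegaard splittings and diagrams recalled in Section 2; and a crossing of the level $3/2$ by an index--$1$ (resp. index--$2$) critical value can be pre-empted by one more interior stabilisation performed near $3/2$, against whose new handle the offending handle is cancelled before it would cross. Assembling these moves yields a common interior stabilisation of the two splittings.

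The main obstacle, and the point requiring the most care, is keeping the whole deformation genuinely relative to $\partial M$: one must check that every birth, death, and critical--value crossing can be pushed into $M\setminus\nu$ without perturbing the prescribed sutured decomposition of the boundary, and --- the subtler issue --- that the index--$1$ and index--$2$ critical values can be controlled near the separating level $3/2$ so that none is forced across it except in the cancelling situations above. This is exactly where the extra interior stabilisations are unavoidable, and where a careful local analysis of the Cerf graphic near the middle level is needed; the portions of the argument away from $\nu$ and away from $f^{-1}(3/2)$ are the classical ones and I would treat them briefly.
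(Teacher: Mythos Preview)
Your proposal is correct and follows essentially the same approach as the paper: both invoke Proposition~\ref{prop:morsesut} to get compatible Morse functions agreeing on a collar of $\partial M$, then run a Cerf-theoretic path relative to that collar and interpret births/deaths as interior (de)stabilisations. The paper's proof is in fact much terser than yours --- it simply cites ``standard arguments of Cerf theory and handle decompositions'' and refers to \cite{juhasz2006holomorphic} for the details --- whereas you spell out the reorganisation of the graphic, the handling of $(0,1)$-- and $(2,3)$--births, and the level--$3/2$ crossings; but the strategy is the same.
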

\begin{proof}
Using Proposition \ref{prop:morsesut}, let $f$ (resp. $f'$) be a Morse function defining $M=C_{1} \cup_{S} C_{2}$ (resp. $M=C_{1}' \cup_{S'} C_{2}'$). We can assume that $f$ and $f'$ agree on a neighbourhood of $\partial M$ (see the proof of Proposition \ref{prop:morsesut}). Then Cerf theory produces a path of generalised ordered Morse functions $\lbrace f_{t}\rbrace_{t \in [0,1]}$ from $f=f_{0}$ to $f'=f_{1}$. As $f_{0}$ 
and $f_{1}$ do not have critical points on a regular neighbourhood of $\partial M$, we can assume that the $f_{t}$'s behave accordingly, and that they agree with $f_{0}$ and $f_{1}$ on this neighbourhood, therefore fixing the sutured decomposition of $\partial M$. Now we can apply standard arguments of Cerf theory and handle decompositions (see \cite{juhasz2006holomorphic}): as $t$ moves from $0$ to $1$, the $f_{t}$ will produce a finite number of births and deaths of critical points, which correspond to stabilisations and destabilisations, as well as a finite number of handleslides.
\end{proof}   
Now we need another proposition, which relies on more combinatorial arguments, and will allow us to conclude our proof of Theorem \ref{th:sut}. 

\begin{Proposition}
\label{prop:shs}
If $M$ is a compact oriented connected $3$--manifold with boundary, and $\phi$ is a self-diffeomorphism of $M$, then there exists a sutured decomposition of $\partial M$ that is preserved by a diffeomorphism of $M$ isotopic to $\phi$.
\end{Proposition}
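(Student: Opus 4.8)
The plan is to reduce the statement to the behaviour of $\phi$ on $\partial M$ and then to a purely combinatorial problem about isotopy classes of surfaces-with-boundary inside $\partial M$. A sutured decomposition of $\partial M$ is the data of a separating multicurve $\Gamma \subset \partial M$ (the union of the sutures, i.e. the cores of the annuli $A_i$) splitting $\partial M$ into $S_1 \sqcup S_2$, together with the choice of which side is $S_1$ and which is $S_2$; conversely any separating multicurve $\Gamma$ with no sphere components on either side, such that each side has nonempty boundary, gives such a decomposition. So it suffices to produce a separating multicurve $\Gamma$ on $\partial M$, with a labelling of its complementary pieces into two groups, that is preserved up to isotopy by $\phi|_{\partial M}$ — more precisely, preserved by some diffeomorphism of $\partial M$ isotopic to $\phi|_{\partial M}$ and that extends (after the isotopy of $M$) over $M$. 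Since any isotopy of $\partial M$ extends to an ambient isotopy of $M$ (collar), the last point is automatic: it is enough to find $\Gamma$ with $(\phi|_{\partial M})(\Gamma)$ isotopic to $\Gamma$ in $\partial M$ and the labelling respected.

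First I would pass to a power or use the mapping class group: $\phi$ induces a mapping class $[\psi]$ on the (possibly disconnected) surface $\partial M$, and I want an invariant separating multicurve. The natural candidate is the "canonical reduction system" of $\psi$ from Thurston--Nielsen theory — the canonical (hence $\psi$-invariant up to isotopy) maximal multicurve along which $\psi$ is reducible. However, that multicurve may fail to be separating, or may cut off unwanted pieces (sphere or disk components), or its complement may not admit the desired $2$-colouring, so I would instead build $\Gamma$ by hand: take a pants decomposition $\mathcal{P}$ of each component of $\partial M$, form the orbit $\bigcup_{n} \psi^n(\mathcal{P})$, and replace it by a filling $\psi$-invariant (up to isotopy) multicurve by a curve-complex / bounded-geometry argument, or more elementarily double $\mathcal{P}$ into $\mathcal{P} \cup \psi(\mathcal{P})$ and resolve intersections. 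The cleanest route is: choose any handle decomposition of $M$ relative to a collar of $\partial M$ as in Proposition \ref{prop:exsplit}, which already furnishes one sutured decomposition $\partial M = S_1 \cup (\partial S_1 \times I) \cup S_2$; then $\phi$ carries it to another sutured decomposition $\phi(S_1) \cup \cdots \cup \phi(S_2)$; now I need to interpolate between these two decompositions of $\partial M$ equivariantly.

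Concretely, the key step is: given two sutured decompositions of the single surface $\partial M$ (equivalently, two ways of writing $\partial M = A \cup_\Gamma B = A' \cup_{\Gamma'} B'$ with the colouring data), there is a third sutured decomposition isotopic to both, after possibly stabilising (adding trivial suture annuli, i.e. parallel pairs of curves bounding collars) — and this can be done $\phi$-equivariantly by taking a common refinement. I would make $\Gamma$ and $\Gamma' = \phi(\Gamma)$ transverse and minimal, then the union $\Gamma \cup \Gamma'$ cuts $\partial M$ into pieces; choosing a consistent $2$-colouring of these pieces (using the colourings from the two decompositions and a parity/nesting argument, enlarging the suture annuli to absorb the bigon-free intersection pattern) yields a multicurve $\Gamma''$ whose complementary colouring is preserved by $\phi$ up to isotopy, and which still has nonempty boundary on each side and no sphere pieces because we only ever subdivided the original non-sphere pieces. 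Replacing the original decomposition by $\Gamma''$ and applying the collar extension gives a diffeomorphism of $M$ isotopic to $\phi$ preserving it.

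The main obstacle is the $2$-colouring / separation requirement: $\phi$-invariance of an unoriented multicurve is easy (canonical reduction systems, or averaging a pants decomposition), but a sutured decomposition needs a coherent partition of the complementary regions into "$S_1$-type" and "$S_2$-type" with the sutures exactly the shared boundary, and this colouring must also be $\phi$-invariant and must avoid closed negative-boundary components (no sphere pieces). Controlling that — in particular showing one can always subdivide/stabilise to make the complement bipartite in a $\phi$-equivariant way while keeping each colour class nonempty and aspherical — is where the real work lies; I expect to handle it by choosing $\Gamma''$ fine enough that every complementary region is planar and then $2$-colouring the dual graph, checking equivariance region by region, and finally merging regions as needed so that neither colour class is empty.
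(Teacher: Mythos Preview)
Your concrete plan has a genuine gap: the ``common refinement'' $\Gamma''$ built from $\Gamma \cup \phi(\Gamma)$ has no reason to be $\phi$-invariant, since $\phi$ carries $\Gamma \cup \phi(\Gamma)$ to $\phi(\Gamma) \cup \phi^{2}(\Gamma)$, and $\phi^{2}(\Gamma)$ need not be isotopic to $\Gamma$. Iterating forces you toward the full orbit $\bigcup_{n} \phi^{n}(\Gamma)$, which for an infinite-order mapping class is not a multicurve. Your alternatives (canonical reduction systems, averaging a pants decomposition) hit the same wall when $\phi|_{\partial M}$ is pseudo-Anosov on some component: there are then \emph{no} invariant essential multicurves at all, so any argument that looks for one cannot succeed. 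The assertion that a third decomposition is ``isotopic to both, after possibly stabilising'' is likewise circular --- if the two became isotopic after stabilisation you would already be finished, and if not, no refinement repairs that.

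The paper avoids all of this with a much more elementary idea: choose the sutured decomposition so trivial that invariance is automatic. On each boundary component $\partial_{j} M$ take $S_{1} \cap \partial_{j} M$ to be a single embedded disk $D_{j}$ and $S_{2} \cap \partial_{j} M$ its complement (minus a suture annulus). Any two embedded disks in a connected surface are ambiently isotopic; so on each $\phi$-orbit $\{\partial_{1} M,\ldots,\partial_{s} M\}$ of boundary components one picks $D_{1}$, propagates it to $D_{j}=\phi^{j-1}(D_{1})$, and then composes $\phi$ with a diffeomorphism of $M$ isotopic to the identity (supported in a collar of $\partial_{1} M$) that sends $\phi^{s}(D_{1})$ back to $D_{1}$. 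The sutures are inessential curves bounding disks, invisible to the mapping class group --- no Nielsen--Thurston theory, no refinement, no $2$-colouring obstruction.
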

\begin{proof}  
Pick a component $\partial_{1}M$ of $\partial M$, and choose a disk $D_{1} \subset \partial_{1}M$.
Now, let's consider the orbit of $\partial_{1} M$ under the action of $\phi$:
\begin{equation*}
\lbrace \partial_{1} M, \; ... \; ,\partial_{j} M = \phi(\partial_{j-1} M) = \phi^{j-1}(\partial_{1} M), \; ... \; ,\partial_{s} M= \phi^{s-1}(\partial_{1} M)= \phi^{-1}(\partial_{1}M) \rbrace. 
\end{equation*} 
Then $\phi^{s}(D_{1})$ is not necessarily $D_{1}$, but we can compose $\phi_{\mid \partial M}$ with a diffeomorphism $g$ of $\partial M$ equal to the identity on every component other than $\partial_{1} M$, and isotopic to the identity on $\partial_{1}M$ but sending $\phi^{s}(D_{1})$ to $D_{1}$. As $g$ is isotopic to the identity on $\partial M$, we can extend it to a diffeomorphism of $M$ isotopic to the identity, that we still call $g$. Then $g \circ \phi$ is a diffeomorphism of $M$ isotopic to $\phi$ that 
preserves the set $\lbrace D_{1}, D_{2}=\phi(D_{1}), \; ... \; , D_{s}=\phi^{s-1}(D_{1})\rbrace$. By setting a sutured decomposition of each component:
  \begin{equation*}
\partial_{j} M = D_{j} \cup (\partial D_{j} \times [0,1]) \cup \overline{ \big( \partial_{j} M \setminus ( D_{j} \cup (\partial D_{j} \times [0,1]) \big)}
\end{equation*}
we obtain a sutured decomposition of the orbit of $\partial_{1}M$ that is preserved by \mbox{$g \circ \phi$}. Now we can apply the same argument on each orbit of the action of $\phi$ on $\partial M$, as $g$ leaves these orbits invariant. We have thus produced the desired sutured decomposition of $\partial M$. 
\end{proof}
\begin{proof}[Proof of Theorem \ref{th:sut}]
Let $\phi$ be a self-diffeomorphism of $M$. According to Proposition \ref{prop:shs}, we may assume that there is always a sutured decomposition of $\partial M$ that is preserved by $\phi$ (to be precise, an unbalanced sutured decomposition, see Remark \ref{rmkunbalanced}). Then by Proposition \ref{prop:exsplit} there is a sutured Heegaard splitting $M=C_{1} \cup_{S} C_{2}$ inducing this decomposition. Now we can transpose to the sutured setting the argument of \cite{koenig2017trisections}: the image $M=\phi(C_{1}) \cup_{\phi(S)} \phi(C_{2})$ is also a sutured Heegaard splitting of $M$, involving the same decomposition of $\partial M$. Using Proposition \ref{prop:isot} (and the fact that the image of a stabilisation is a stabilisation of the image), we can conclude that, after a number of interior stabilisations, we always obtain a sutured Heegaard splitting that is preserved by $\phi$, up to isotopy. 
\end{proof}

\section{Constructing the relative trisections}
Given a compact, oriented, connected, smooth $3$--manifold $M$ with non-empty bounda\-ry (possibly disconnected), together with an orientation preserving self-diffeomor\-phism $\phi$, we consider the fiber bundle $X=\big( M \times I \big) / \sim$, where $(x,0) \sim (\phi(x) , 1)$. Then $X$ is a smooth, compact, oriented, connected $4$--manifold (with non-empty boundary), of which we want to build a relative trisection.

We suppose that $M$ admits a sutured Heegaard splitting $M = C_{1} \cup_{S} C_{2}$ that $\phi$ preserves (i.e. $\phi(C_{i}) = C_{i}$) or flips (i.e. $\phi(C_{i}) = C_{i+1}$, considering indices modulo two). We denote by $g$ the genus of $S$ and by $b$ its number of boundary components, i.e. $S \simeq F_{g,b}$. According to Theorem \ref{th:sut}, we can always expect $M$ to admit a sutured Heegaard splitting that is preserved, up to isotopy, by $\phi$ (the fact that it is only preserved up to isotopy is not a problem because isotopic monodromies produce diffeomorphic bundles).
\begin{Remark}
The monodromy acts on the components of both $\partial M$ and $\partial S$; in either case it can preserve or flip boundary components. We don't make any assumptions on how the monodromy acts on the boundary components in what follows, unless otherwise stated.
\end{Remark}
\subsection{Case 1: $\phi$ flips a sutured Heegaard splitting of $M$}
In this case, necessarily $\partial_{-}C_{1} \simeq \partial_{-}C_{2}$, i.e. $M=C_{1} \cup_{S} C_{2}$ is balanced. We can re\-present $X$ as a rectangle with vertical edges identified, as in Figure \ref{schemX}. The horizontal segment corresponds to the interval $I$ parametrized by $t$. Each vertical segment is a copy of $M$. The vertical segments $M \times \lbrace 0 \rbrace$ and $M \times \lbrace 1 \rbrace$ are identified according to $\phi$.
\begin{figure}[h!]
\[
\begin{tikzpicture}[scale=0.7]
\draw (0,0) -- (8,0) -- (8,4) -- (0,4) -- (0,0);
\draw [dashed, stealth-] (0,-0.1) -- (0,-1);
\draw [dashed, stealth-] (8,-0.1) -- (8,-1); 
\node (a) at (0,4.28) {0};
\node (a) at (8,4.28) {1};
\node (a) at (4,-0.7) {gluing along $\phi$};
\draw [dotted] (3,0) -- (3,1.7);
\draw [dotted] (3,4) -- (3,2.3);
\draw [dashed] (0,-1) -- (8,-1);
\node (a) at (3,2) {$M \times \lbrace t \rbrace$};
\end{tikzpicture}
\]
\caption{A representation of $X$}
\label{schemX}
\end{figure}
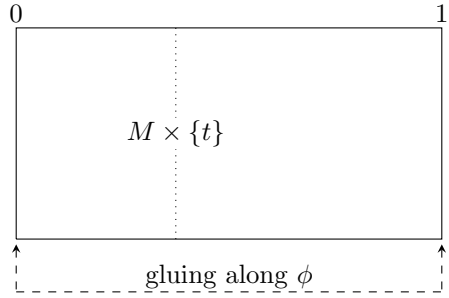 
\\
Next we divide each copy $M \times \lbrace t \rbrace$ according to $\big( C_{1} \cup_{S} C_{2} \big) \times \lbrace t \rbrace$. We further divide the horizontal segment $I$, keeping in mind that $C_{i}$ is identified with $C_{i+1}$ at $0 \sim 1$, to obtain the decomposition represented on Figure \ref{schemX1}.
 \begin{figure}[h!]
\[
\begin{tikzpicture}
\draw (0,0) -- (8,0) -- (8,4) -- (0,4) -- (0,0);
\draw [dashed] (0,0) -- (0,-0.5);
\draw [dashed] (8,0) -- (8,-0.5); 
\node (a) at (0,4.2) {0};
\node (a) at (8,4.2) {1};
\node (a) at (0,-0.7) {$\phi$};
\node (a) at (8,-0.7) {$\phi$};
\draw [very thick] (0,2) -- (8,2);
\draw [stealth - stealth] (8.5,4) -- (8.5,2.5);
\draw [stealth - stealth] (8.5,0) -- (8.5,1.5); 
\node (b) at (8.5,2) {$S$};
\node (b) at (8.8,1) {$C_{1}$};
\node (b) at (8.8,3) {$C_{2}$};
\draw (2.66,0) -- (2.66,2);
\node (a) at (2.66,4.2) {$t_{1}$};
\draw (4,2) -- (4,4);
\node (a) at (4,4.2) {$t_{2}$};
\draw (5.33,2) -- (5.33,0);
\node (a) at (5.33,4.2) {$t_{3}$};
\node (c) at (0.4,3.5) {$X_{2} '$};
\node (c) at (7.6,0.5) {$X_{2} '$};
\node (c) at (7.6,3.5) {$X_{3} '$};
\node (c) at (0.4,0.5) {$X_{3} '$};
\node (c) at (4,0.5) {$X_{1} '$};
\end{tikzpicture}
\]

 \caption{A decomposition of $X$}
\label{schemX1}
\end{figure}
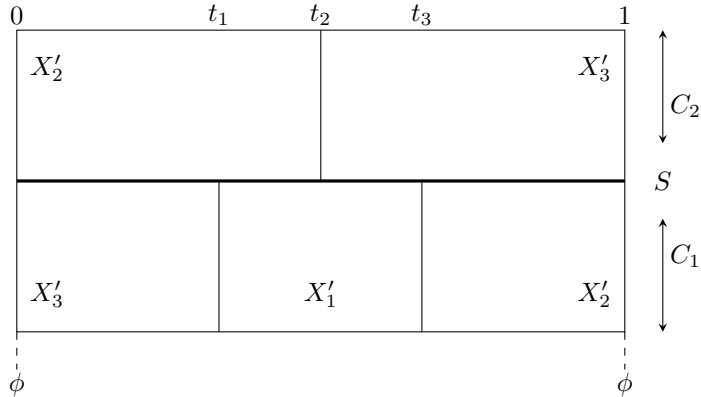

Now, each $X_{k}'$ on Figure \ref{schemX1} is diffeomorphic to a $C_{i} \times I$. Because the $C_{i}$'s are tridimensional handlebodies, the $X_{k}'$'s are $4$--dimensional handlebodies. But the triple intersection is not connected. So we still have a bit of work to do: the last step is to drill out three sets of $4$--dimensional tubes $I \times B^{3}$ along the boundary of the $X_{k}'$'s, then assign each set of tubes to one $X_{k}'$, giving the final decomposition of Figure \ref{trisXcas1}. The global construction is the same as in the closed case (\cite{koenig2017trisections}), but because of the boundary, we have to be more careful as to how we design our tubes.

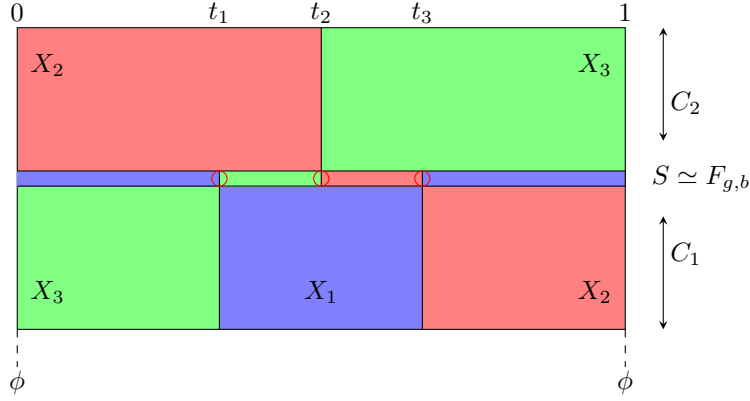
\begin{figure}[h!]
\[
\begin{tikzpicture}
\fill [green!50] (0,0) -- (2.66,0) -- (2.66,2) -- (0,2) -- (0,0);
\fill [green!50] (4,2) -- (8,2) -- (8,4) -- (4,4) -- (4,2);
\fill [red!50] (0,2) -- (4,2) -- (4,4) -- (0,4) -- (0,2);
\fill [red!50] (5.33,0) -- (8,0) -- (8,2) -- (5.33,2) -- (5.33,0);
\fill [blue!50] (2.66,0) -- (5.33,0) -- (5.33,2) -- (2.66,2) -- (2.66,0);

\draw (0,0) -- (8,0) -- (8,4) -- (0,4) -- (0,0);
\draw [dashed] (0,0) -- (0,-0.5);
\draw [dashed] (8,0) -- (8,-0.5); 
\node (a) at (0,4.2) {0};
\node (a) at (8,4.2) {1};
\node (a) at (0,-0.7) {$\phi$};
\node (a) at (8,-0.7) {$\phi$};
\fill [white] (0,1.9) -- (8,1.9) -- (8,2.1) -- (0,2.1) -- (0,2.1);
\fill [green!50] (2.66,1.9) -- (4,1.9) -- (4,2.1) -- (2.66,2.1) -- (2.66,1.9);
\fill [blue!50] (0,1.9) -- (2.66,1.9) -- (2.66,2.1) -- (0,2.1) -- (0,1.9);
\fill [blue!50] (5.33,1.9) -- (8,1.9) -- (8,2.1) -- (5.33,2.1) -- (5.33,1.9);
\fill [red!50] (4,1.9) -- (5.33,1.9) -- (5.33,2.1) -- (4,2.1) -- (4,1.9);
\draw (0,1.9) -- (8,1.9) -- (8,2.1) -- (0,2.1) -- (0,2.1);
\draw [stealth - stealth] (8.5,4) -- (8.5,2.5);
\draw [stealth - stealth] (8.5,0) -- (8.5,1.5); 
\node (b) at (9,2) {$S \simeq F_{g,b}$};
\node (b) at (8.8,1) {$C_{1}$};
\node (b) at (8.8,3) {$C_{2}$};
\draw (2.66,0) -- (2.66,2.1);
\node (a) at (2.66,4.2) {$t_{1}$};
\draw (4,1.9) -- (4,4);
\node (a) at (4,4.2) {$t_{2}$};
\draw (5.33,2.1) -- (5.33,0);
\node (a) at (5.33,4.2) {$t_{3}$};
\node (c) at (0.4,3.5) {$X_{2}$};
\node (c) at (7.6,0.5) {$X_{2}$};
\node (c) at (7.6,3.5) {$X_{3}$};
\node (c) at (0.4,0.5) {$X_{3}$};
\node (c) at (4,0.5) {$X_{1}$};
\draw [red] (2.66,2) circle (0.1cm);
\draw [red] (4,2) circle (0.1cm);
\draw [red] (5.33,2) circle (0.1cm);
\end{tikzpicture}
\]
\caption{A trisection of $X$, case 1}
\label{trisXcas1}
\end{figure}

Let's focus on the horizontal narrow coloured stripes dividing the rectangle in Figure~\ref{trisXcas1}. Every stripe represents a set of $b$ $4$--dimensional tubes, i.e. copies of $I \times B^{3}$. So each tube can be seen as a family of $3$--balls $\lbrace B_{t} \rbrace$, parametrized by $t$. We impose that, for each $t$, $B_{t}$ intersects $S \times \lbrace t \rbrace$. Now me must specify \textit{how}. For instance, if each $B_{t}$ intersects $S \times \lbrace t \rbrace$ on the interior of the surface, the pages $\big( X_{i} \cap X_{j} \big) \cap \partial X$ are never connected.
\begin{itemize}
\item We start with a tube $[t_{1},t_{2}] \times B^{3}$. We choose a component of $\partial S$ that we call $\partial_{1}S$. Then we define, for each $t \in [t_{1},t_{2}]$, a ball $B_{t}$ intersecting $S \times \lbrace t \rbrace$ along a disk, $\partial_{1}S \times \lbrace t \rbrace$ along a closed interval and $\partial M \times \lbrace t \rbrace$ along a disk. Thus we create a path of $3$--balls $\lbrace B_{t}^{3,1} \mid t \in [t_{1},t_{2}] \rbrace$. We impose that this path is smooth and set it as our first tube $T_{3,1}$. We define in the same fashion a tube $T_{2,1}$ in $M \times [t_{2},t_{3}]$.
\item Then we design a quotient tube $T_{1,1} = \big( [t_{3},1] \cup_{0 \sim 1} [0,t_{1}] \big) \times B^{3}$, by gluing two tubes $\lbrace B_{t}^{1,1} \mid t \in [t_{3},1] \rbrace$ and $\lbrace B_{t}^{1,1} \mid t \in [0,t_{1}] \rbrace$ at $0 \sim 1$. 
\item We impose that:
\begin{itemize}
\item all the tubes are disjoint;
\item $B_{t_{1}}^{1,1} \subset C_{1} \times \lbrace t_{1} \rbrace$ and $B_{t_{3}}^{1,1} \subset C_{1} \times \lbrace t_{3} \rbrace$; 
\item $B_{t_{2}}^{2,1} \subset C_{2} \times \lbrace t_{2} \rbrace$ and $B_{t_{3}}^{2,1} \subset C_{1} \times \lbrace t_{3} \rbrace$; 
\item $B_{t_{1}}^{3,1} \subset C_{1} \times \lbrace t_{1} \rbrace$ and $B_{t_{2}}^{3,1} \subset C_{2} \times \lbrace t_{2} \rbrace$;
\item for each $t_{i} < t < t_{i+1}$, $S \times \lbrace t \rbrace$ is transverse to $B_{t}^{k,1}$. 
\end{itemize}
\item We define $3b$ tubes $\lbrace T_{1,1} ... T_{1,b} \rbrace$, $\lbrace T_{2,1} ... T_{2,b} \rbrace$, 
$\lbrace T_{3,1} ... T_{3,b} \rbrace$ in the same way, for each of the $b$ components of $\partial S$. We set $X_{k} = \big( X_{k}' \cup ( \cup_{l \in [1,b]} T_{k,l}) \big) \setminus Int (\cup_{i \neq k} \cup_{l \in [1,b]} T_{i,l}) $.
\end{itemize}
Informally, the $3$--balls constituting each tube begin their path in $\lbrace t_{i} \rbrace$ fully included in $C_{1} \times \lbrace t_{i} \rbrace$ or $C_{2} \times \lbrace t_{i} \rbrace$, then move tranversely to $S$ to the position occupied in $\lbrace t_{i+1} \rbrace$ (see Figure \ref{fig:compressionex}). All this allows us to get a compact surface for $\big( X_{i} \cap X_{j} \big) \cap \partial X$.  
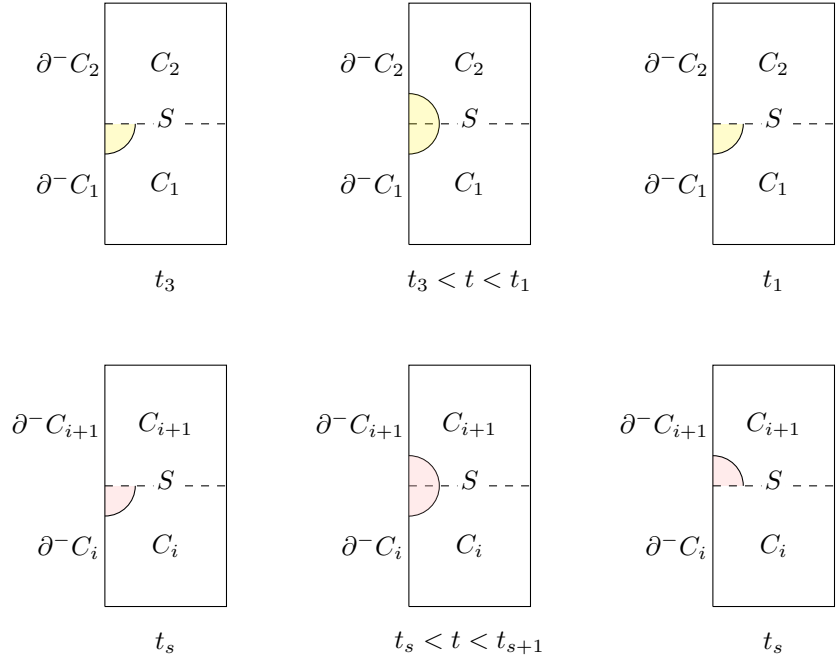
\begin{figure}[h!]
\[
\begin{tikzpicture} [scale=0.8]
\fill [yellow, opacity=0.2] (0,1.5) arc (-90:90:0.5cm and 0.5cm);
\fill [white] (0,2) -- (1,2) -- (1,3) -- (0,3) -- (0,2) -- cycle ;
\draw (0,0) -- (2,0) -- (2,4) -- (0,4) -- (0,0);
\draw [dashed] (0,2) -- (2,2);
\fill [white] (1,2) circle (0.2cm);
\node (a) at (1,1) {$C_{1}$};
\node (a) at (1,3) {$C_{2}$};
\node (a) at (1,2.1) {$S$};
\node (a) at (-0.6,3) {$\partial^{-} C_{2}$};
\node (a) at (-0.6,1) {$\partial^{-} C_{1}$};
\node (a) at (1,-0.6) {$t_{3}$};
\draw (0,1.5) arc (-90:0:0.5cm and 0.5cm);

\begin{scope}[shift={(5,0)}]
\draw (0,0) -- (2,0) -- (2,4) -- (0,4) -- (0,0);
\draw [dashed] (0,2) -- (2,2);
\fill [white] (1,2) circle (0.2cm);
\node (a) at (1,1) {$C_{1}$};
\node (a) at (1,3) {$C_{2}$};
\node (a) at (1,2.1) {$S$};
\node (a) at (-0.6,3) {$\partial^{-} C_{2}$};
\node (a) at (-0.6,1) {$\partial^{-} C_{1}$};
\node (a) at (1,-0.6) {$t_{3} < t < t_{1}$};
\draw (0,1.5) arc (-90:90:0.5cm and 0.5cm);
\fill [yellow, opacity=0.2] (0,1.5) arc (-90:90:0.5cm and 0.5cm);
\end{scope}

\begin{scope}[shift={(10,0)}]
\fill [yellow, opacity=0.2] (0,1.5) arc (-90:90:0.5cm and 0.5cm);
\fill [white] (0,2) -- (1,2) -- (1,3) -- (0,3) -- (0,2) -- cycle ;
\draw (0,0) -- (2,0) -- (2,4) -- (0,4) -- (0,0);
\draw [dashed] (0,2) -- (2,2);
\fill [white] (1,2) circle (0.2cm);
\node (a) at (1,1) {$C_{1}$};
\node (a) at (1,3) {$C_{2}$};
\node (a) at (1,2.1) {$S$};
\node (a) at (-0.6,3) {$\partial^{-} C_{2}$};
\node (a) at (-0.6,1) {$\partial^{-} C_{1}$};
\node (a) at (1,-0.6) {$t_{1}$};
\draw (0,1.5) arc (-90:0:0.5cm and 0.5cm);
\end{scope}

\begin{scope}[shift={(5,-6)}]
\draw (0,0) -- (2,0) -- (2,4) -- (0,4) -- (0,0);
\draw [dashed] (0,2) -- (2,2);
\fill [white] (1,2) circle (0.2cm);
\node (a) at (1,1) {$C_{i}$};
\node (a) at (1,3) {$C_{i+1}$};
\node (a) at (1,2.1) {$S$};
\node (a) at (-0.8,3) {$\partial^{-} C_{i+1}$};
\node (a) at (-0.6,1) {$\partial^{-} C_{i}$};
\node (a) at (1,-0.6) {$t_{s} < t < t_{s+1}$};
\draw (0,1.5) arc (-90:90:0.5cm and 0.5cm);
\fill [pink, opacity=0.3] (0,1.5) arc (-90:90:0.5cm and 0.5cm);
\end{scope}

\begin{scope}[shift={(0,-6)}]
\fill [pink, opacity=0.3] (0,1.5) arc (-90:90:0.5cm and 0.5cm);
\fill [white] (0,2) -- (1,2) -- (1,3) -- (0,3) -- (0,2) -- cycle ;
\draw (0,0) -- (2,0) -- (2,4) -- (0,4) -- (0,0);
\draw [dashed] (0,2) -- (2,2);
\fill [white] (1,2) circle (0.2cm);
\node (a) at (1,1) {$C_{i}$};
\node (a) at (1,3) {$C_{i+1}$};
\node (a) at (1,2.1) {$S$};
\node (a) at (-0.8,3) {$\partial^{-} C_{i+1}$};
\node (a) at (-0.6,1) {$\partial^{-} C_{i}$};
\node (a) at (1,-0.6) {$t_{s}$};
\draw (0,1.5) arc (-90:0:0.5cm and 0.5cm);
\end{scope}

\begin{scope}[shift={(10,-6)}]
\fill [pink, opacity=0.3] (0,1.5) arc (-90:90:0.5cm and 0.5cm);
\fill [white] (0,2) -- (1,2) -- (1,1) -- (0,1) -- (0,2) -- cycle ;
\draw (0,0) -- (2,0) -- (2,4) -- (0,4) -- (0,0);
\draw [dashed] (0,2) -- (2,2);
\fill [white] (1,2) circle (0.2cm);
\node (a) at (1,1) {$C_{i}$};
\node (a) at (1,3) {$C_{i+1}$};
\node (a) at (1,2.1) {$S$};
\node (a) at (-0.8,3) {$\partial^{-} C_{i+1}$};
\node (a) at (-0.6,1) {$\partial^{-} C_{i}$};
\node (a) at (1,-0.6) {$t_{s}$};
\draw (0.5,2) arc (0:90:0.5cm and 0.5cm);
\end{scope}

\end{tikzpicture}
\]
\caption{Visualising the positions of the $3$--balls on the compression bodies\\{\footnotesize  With a yellow $B_{1,l} $ and a pink $B_{2,l}$ or $B_{3,l}$}} 
\label{fig:compressionex}
\end{figure}
\begin{Remark}
The number of tubes allows the number of boundary components of the pages to match that of the central surface (and the number of boundary components of $\partial X$).
\end{Remark}
\begin{Remark}
The $3$--balls do not affect, up to diffeomorphism, the $C_{i}$'s, nor the surfaces $S$ and $\partial_{-}C_{i}$. Therefore, we will simply refer to a $C_{i}$ minus the interior of the family of $3$--balls defined above as a $C_{i}$, and to $(S \times \lbrace t \rbrace) \setminus Int \big( (S \times \lbrace t \rbrace) \cap \cup_{k,l} B_{t}^{k,l} \big) $ as $( S \times \lbrace t \rbrace)$.  
\end{Remark}
\begin{Proposition}
\label{prop:triscase1}
The decomposition $X=X_{1} \cup X_{2} \cup X_{3}$ is a relative trisection of $X$.
\end{Proposition}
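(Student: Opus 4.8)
The plan is to verify in turn the five conditions of Definition~\ref{def:relative}. The two ingredients are the product description of the pieces $X_k'$ and the list of constraints imposed on the tubes; Figure~\ref{fig:compressionex} is the local model that governs everything, and by the remarks preceding the statement we may ignore the drilled $3$--balls when computing diffeomorphism types of the $C_i$'s, of $S$ and of the $\partial_-C_i$'s.

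First I would dispatch the handlebody condition. The piece $X_1'$ is $C_1\times[t_1,t_3]$, whereas $X_2'$ and $X_3'$ are each obtained by gluing a copy of $C_1\times I$ to a copy of $C_2\times I$ along a whole fiber $C_1\times\{\ast\}$, identified with $C_2\times\{\ast\}$ via $\phi$; here we use that $\phi$ \emph{flips} the splitting, so that $\phi(C_1)=C_2$ and in particular $C_1$ and $C_2$ have the same genus $n$. Concatenating the two cylinders gives $X_2'\simeq X_3'\simeq C_i\times I$, and since a genus $n$ $3$--dimensional handlebody is $\natural^n(S^1\times B^2)$ with $\bigl(\natural^n(S^1\times B^2)\bigr)\times I\simeq\natural^n(S^1\times B^3)$, each $X_k'$ is a $4$--dimensional handlebody. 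Passing to $X_k$ means removing the interiors of the $2b$ disjoint tubes $T_{i,l}$ ($i\neq k$) and attaching the $b$ tubes $T_{k,l}$: each $T_{i,l}$ is a regular neighbourhood, inside $X_k'$, of a properly embedded boundary--parallel arc (the ball $B_t$ stays small and adjacent to $S\cup\partial M$, see Figure~\ref{fig:compressionex}), so its removal does not affect the diffeomorphism type, while each $T_{k,l}$ is glued to $\partial X_k'$ as a $4$--dimensional $1$--handle, raising the genus by one. Thus $X_i\simeq\natural^{n+b}(S^1\times B^3)$ for every $i$.

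The core of the argument is the identification of the triple and double intersections. Inspecting Figures~\ref{schemX1}--\ref{trisXcas1} one sees that \emph{before} tubing, $X_1'\cap X_2'\cap X_3'=S\times\{t_1\}\sqcup S\times\{t_2\}\sqcup S\times\{t_3\}$, and each $X_i'\cap X_j'$ is the disjoint union of one copy of some $C_a$ (a handlebody with $\partial_+=S$) and a spurious collar $S\times[\text{arc}]$. The tubes are engineered to repair both flaws at once: following a tube $T_{k,l}$ from $t_j$ to $t_{j+1}$, the ball $B_t$ begins inside $C_a\times\{t_j\}$, crosses $S\times\{t\}$ transversally and ends inside $C_b\times\{t_{j+1}\}$, exactly as in Figure~\ref{fig:compressionex}, the constraints on the positions $B_{t_i}^{k,l}\subset C_1\times\{t_i\}$ or $C_2\times\{t_i\}$ being precisely what makes this possible. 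I would check that such a tube contributes, to the new triple intersection, a band joining $S\times\{t_j\}$ to $S\times\{t_{j+1}\}$, and that the same surgery absorbs the collar component of the relevant double intersection into its handlebody component. Summing over the $3b$ tubes: $X_1\cap X_2\cap X_3$ becomes connected --- three copies of $S$ banded together $3b$ times, hence a compact connected surface with boundary (lying over $\partial M$, with Euler characteristic $3\chi(S)-3b$, which is Theorem~\ref{th:1flip}) --- and each $X_i\cap X_j=\partial X_i\cap\partial X_j$ becomes a single compact connected $3$--manifold, built from $\bigl((X_i\cap X_j)\cap\partial X\bigr)\times I$ by attaching $1$--handles (those of $C_a$ together with the ones carried by the tubes), i.e. a handlebody $C_{i,j}$ with $\partial_+C_{i,j}=X_1\cap X_2\cap X_3$ and $\partial_-C_{i,j}=(X_i\cap X_j)\cap\partial X$.

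It remains to treat $\partial X=(\partial M\times I)/\!\sim$. Since $\phi$ flips the splitting, the latter is balanced, so $\partial_-C_1\simeq\partial_-C_2$; restricting the whole construction to $\partial M\times I$ yields on the boundary the analogous one--dimension--lower decomposition, built from the induced sutured decomposition $\partial M\simeq S_1\cup(\partial S_1\times[0,1])\cup S_2$ and from the traces of the tubes on $\partial M$. From this one reads off that each $X_i\cap\partial X$ is a product $P\times I$ for a fixed page $P$, that the three glue to a $P$--bundle over $S^1$, that the solid tori $\partial P\times I\times I$ fill its boundary components (the count working out precisely because of the chosen number of tubes), and that $(X_i\cap X_j)\cap\partial X\simeq P$ --- the open--book decomposition required by Definition~\ref{def:relative}. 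The main obstacle is exactly this local bookkeeping around the tubes: one has to check, tube by tube and distinguishing the boundary components that $\phi$ preserves from those it flips, that drilling $T_{i,l}$ out of two pieces and regluing it to the third simultaneously connects the triple intersection, makes each double intersection a connected handlebody with the right compression--body structure, and keeps $X_i\cap\partial X$ a product. Figure~\ref{fig:compressionex} is designed to make this believable, and converting it into a rigorous case analysis is the bulk of the work.
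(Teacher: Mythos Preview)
Your proof follows the same approach as the paper's: verify the five conditions of Definition~\ref{def:relative} by direct inspection, using the product structure of the $X_k'$ and the local model of Figure~\ref{fig:compressionex} for the tubes. Your treatment of the $X_k$'s as handlebodies, of the triple intersection as three copies of $S$ banded together, and of $X_i\cap\partial X$ via the restriction to $\partial M\times I$ all match the paper's argument.

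There is one point where your description diverges from the paper and becomes imprecise. You write that the tubes ``absorb the collar component of the relevant double intersection into its handlebody component'' and then describe $X_i\cap X_j$ as $\bigl((X_i\cap X_j)\cap\partial X\bigr)\times I$ with $1$--handles attached, namely ``those of $C_a$ together with the ones carried by the tubes''. The paper's analysis (see Figure~\ref{doublintercas1}) is more explicit: for instance $X_1\cap X_2$ consists of \emph{three} pieces, $C_1\times\{t_3\}$, the thickened surface $S\times[t_1,t_2]$, and $2b$ tridimensional $1$--handles joining them. The collar $S\times[t_1,t_2]$ is not absorbed; it survives as a genus--$(2g+b-1)$ handlebody and contributes its own $1$--handles to the compression--body structure of $C_{1,2}$, beyond those of $C_1$ and those from the tubes. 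Your $1$--handle accounting therefore undercounts, and the claim that $C_{i,j}$ is a compression body with the stated positive and negative boundaries needs the full three--piece description to be verified (this is what Figures~\ref{doublinterbordcas1} and~\ref{doublintertriplecas1} do in the paper). The conclusion is correct, but your sketch of \emph{why} it is a compression body with $\partial_-C_{i,j}=(X_i\cap X_j)\cap\partial X$ would not go through as written.
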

\begin{proof} We just need to check that the pieces of the decomposition corresponds to those defined in \ref{def:relative}.
\begin{itemize}
\item Each $X_{k}$ is a union of a $4$--dimensional handlebody $C_{i} \times I$ and $b$ $4$--dimensional $1$--handles (our tubes  $\cup_{l \in [1,b]} T_{k,l}$). Therefore each $X_{k}$ is a $4$--dimensional handle\-body.
\item The triple intersection $X_{1} \cap X_{2} \cap X_{3}$ is composed of three copies of $S$ (the $S \times \lbrace t_{i} \rbrace$ for $i=1,2,3$), whose boundary components are joined by $3b$ $2$--dimensional $1$--handles $\big( B_{t}^{k,l} \cap (S \times \lbrace t \rbrace) \big) \times I$, as shown on Figure \ref{triplinterpres} and Figure \ref{triplinterflips}. So $X_{1} \cap X_{2} \cap X_{3}$ is a compact connected surface with non empty boundary.
\item The intersection $(X_{1} \cap X_{2}) \cap \partial X$ is a boundary connected sum of $\partial_{-}C_{1} \times \lbrace t_{3} \rbrace$ and bands (see Figure \ref{doublintercas1} and Figure \ref{doublinterbordcas1}). Therefore it is also a compact surface. The other intersections $(X_{2} \cap X_{3}) \cap \partial X$ and $(X_{3} \cap X_{1}) \cap \partial X$ are built in the same way: all three surfaces are diffeomorphic. Note that they can be disconnected if $\partial M$ is disconnected. In this case, they have the same number of components as $\partial X$ (if the monodromy glues together two different boundary components of $M$, it will act on the pages accordingly). 
\item  As $X_{1} \cap X_{2}$ is composed of: 
\begin{itemize}
\item $C_{1} \times \lbrace t_{3} \rbrace$;
\item $S \times [t_{1},t_{2}]$ (a tridimensional handlebody, since $S$ has boundary);
\item $2b$ tridimensional $1$--handles linking the above handlebodies;
\end{itemize}
it is a tridimensional handlebody, that we will call $C_{1,2}$ (see Figure \ref{doublintercas1}). Moreover, Figure \ref{doublinterbordcas1} and Figure \ref{doublintertriplecas1} depict the decomposition of the boundary of $X_{1} \cap X_{2}$ as:
\begin{equation*}
\partial (X_{1} \cap X_{2}) = \big( X_{1} \cap X_{2} \cap X_{3} \big) \cup \big( (X_{1} \cap X_{2}) \cap \partial X \big)
\end{equation*}  
This gives $C_{1,2}$ a structure of compression body, with $\partial_{+} C_{1,2} = X_{1} \cap X_{2} \cap X_{3}$ and $\partial_{-} C_{1,2} = (X_{1} \cap X_{2}) \cap \partial X$.

The intersections $X_{2} \cap X_{3}$ and $X_{3} \cap X_{1}$ are built in the same way.
\item Finally, we have:
\begin{equation*}
X_{1} \cup \partial X \simeq (\partial_{-} C_{1} \times [t_{1},t_{3}]) \cup \big( (\cup_{l \in [1,b]} T_{1,l}) \cup \partial X \big) \simeq \big( (X_{1} \cap X_{2})\cap \partial X \big) \times I
\end{equation*}
and $X_{2} \cap \partial X$ or $X_{3} \cap \partial X$ are built in the same way.
\end{itemize}
Therefore $X=X_{1} \cup X_{2} \cup X_{3}$ is a relative trisection of $X$. 
\end{proof}
\begin{figure}[h!]
\[
\begin{tikzpicture}[scale=0.7]
\node (mypic) at (0,0) {\includegraphics[scale=0.5]{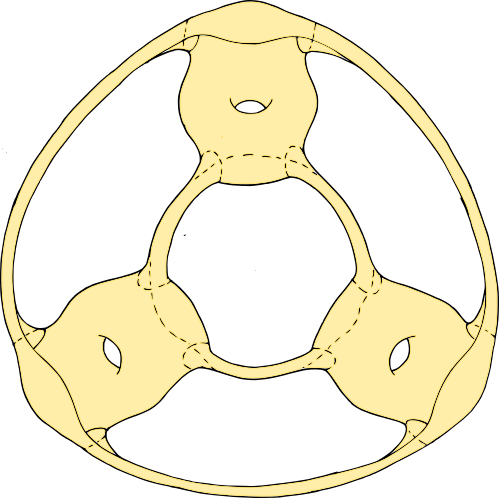}};
\node[circle, draw, fill=white] (a) at (4.7,-3.5) {$t_{3}$};
\node[circle, draw, fill=white] (a) at (0,5.5) {$t_{2}$};
\node[circle, draw, fill=white] (a) at (-4.7,-3.5) {$t_{1}$};
\node [ellipse, draw, fill=white](a) at (0.5,-5.2) {$0 \sim 1$};
\end{tikzpicture}
\]
\caption{A view of $X_{1} \cap X_{2} \cap X_{3}$, Case 1, $\phi$ preserves the components of $\partial S$}
\label{triplinterpres}
\end{figure}

\begin{figure}[h!]
\[
\begin{tikzpicture}[scale=0.8]
\node (mypic) at (0,0) {\includegraphics[scale=0.5]{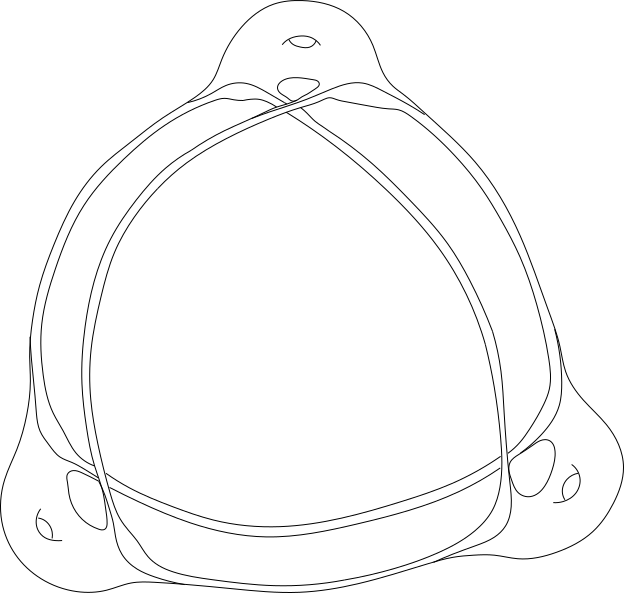}};
\node[circle, draw, fill=white] (a) at (5.5,-4) {$t_{3}$};
\node[circle, draw, fill=white] (a) at (0,5.5) {$t_{2}$};
\node[circle, draw, fill=white] (a) at (-6,-4) {$t_{1}$};
\node [ellipse, draw, fill=white](a) at (0.3,-5.6) {$0 \sim 1$};

\end{tikzpicture}
\]
\caption{A view of $X_{1} \cap X_{2} \cap X_{3}$, Case 1, when $\phi$ flips the components of $\partial S$}
\label{triplinterflips}
\end{figure}

\begin{figure}[h!]
\[
\begin{tikzpicture}[scale=0.7]
\node (mypic) at (0,0) {\includegraphics[scale=0.5]{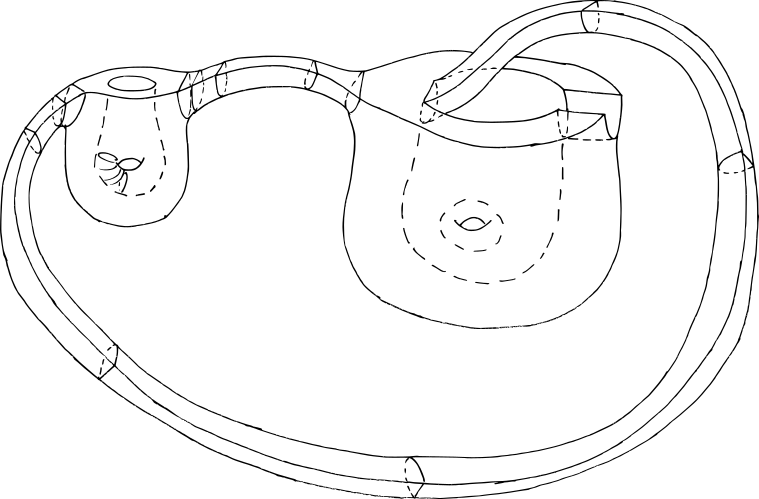}};
\node[circle, draw, fill=white] (a) at (-5.2,4.2) {$t_{3}$};
\node[circle, draw, fill=white] (a) at (1,4.4) {$t_{1}$};
\node[circle, draw, fill=white] (a) at (3,3.2) {$t_{2}$};
\node [ellipse, draw, fill=white](a) at (-2.3,4.3) {$0 \sim 1$};
\node (b) at (-4.9,0) {$C_{1}$};
\node (b) at (2.5,-2) {$S \times [t_{1},t_{2}]$};
\end{tikzpicture}
\]
\caption{A view of $X_{1} \cap X_{2}$, case 1}
\label{doublintercas1}
\end{figure}

\begin{figure}[h!]
\[
\begin{tikzpicture}[scale=0.8]
\node (mypic) at (0,0) {\includegraphics[scale=0.64]{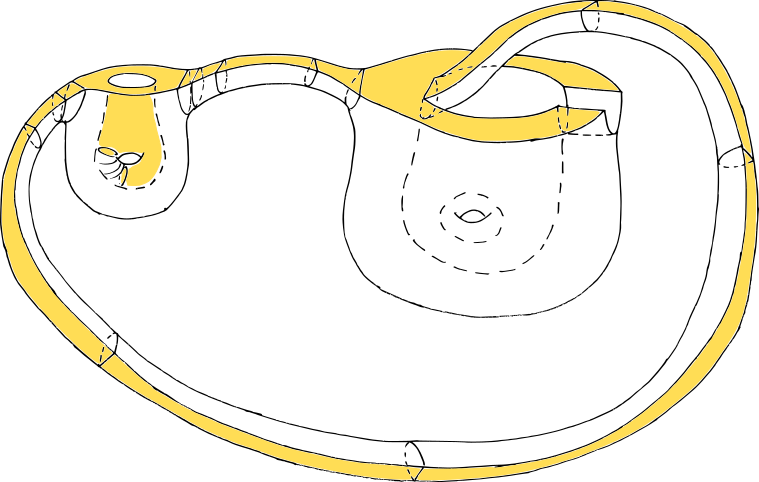}};
\node[circle, draw, fill=white] (a) at (-5.2,4.4) {$t_{3}$};
\node[circle, draw, fill=white] (a) at (1,4.5) {$t_{1}$};
\node[circle, draw, fill=white] (a) at (3.2,3.2) {$t_{2}$};
\node [ellipse, draw, fill=white](a) at (-2.5,4.5) {$0 \sim 1$};
\node (b) at (-5.1,0) {$C_{1}$};
\node (b) at (2.5,-2) {$S \times [t_{1},t_{2}]$};
\end{tikzpicture}
\]
\caption{The page $(X_{1} \cap X_{2}) \cap \partial X$, coloured in yellow on $X_{1} \cap X_{2}$, Case 1}
\label{doublinterbordcas1}
\end{figure}

\begin{figure}[h!]
\[
\begin{tikzpicture}[scale=0.8]
\node (mypic) at (0,0) {\includegraphics[scale=0.64]{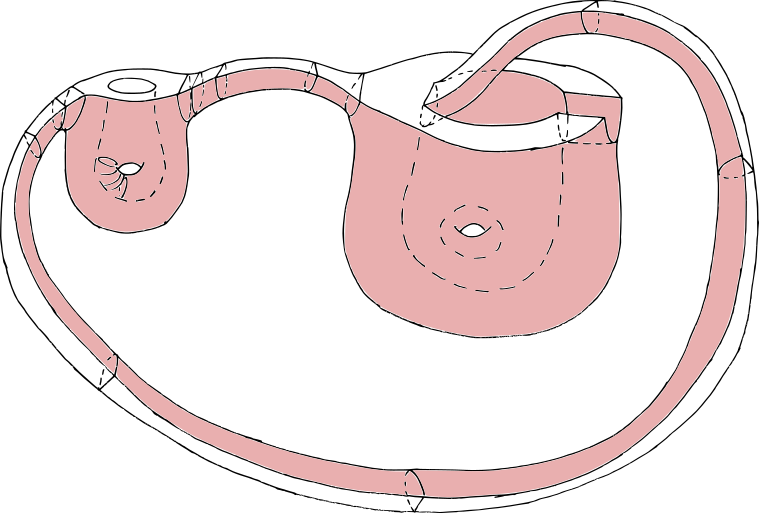}};
\node[circle, draw, fill=white] (a) at (-5,4.6) {$t_{3}$};
\node[circle, draw, fill=white] (a) at (1,4.5) {$t_{1}$};
\node[circle, draw, fill=white] (a) at (2.9,3.5) {$t_{2}$};
\node [ellipse, draw, fill=white](a) at (-2.5,4.7) {$0 \sim 1$};
\node (b) at (-5.1,0) {$C_{1}$};
\node (b) at (2.5,-2.1) {$S \times [t_{1},t_{2}]$};
\end{tikzpicture}
\]
\caption{The central surface coloured in pink on $X_{1} \cap X_{2}$, Case 1}
\label{doublintertriplecas1}
\end{figure}
We can now prove the following result (a more precise but somewhat more engaging version of Theorem \ref{th:1flip}).
\begin{Theorem}
\label{th2}
Let $M$ be a smooth, compact, oriented, connected $3$--manifold and $\phi$ an orientation-preserving self-diffeomorphism of $M$. Let $X$ be the smooth oriented bundle over the circle with fiber $M$ and monodromy $\phi$. The monodromy acts by permutation on the connected components $\lbrace \partial^{1} M, ... , \partial^{\ell_{M}}M \rbrace$ of $\partial M$, and we call $\sigma_{\phi, \partial M }$ this action. Suppose that $M$ admits a sutured Heegaard splitting $M=C_{1} \cup_{S} C_{2}$ that is flipped by $\phi$. We set the following notations:
\begin{itemize}
\item the surface $S$ is of genus $g$ with $b$ boundary components;
\item we write $p$ the sum of the genera of the components of $\partial_{-}C_{1} \simeq \partial_{-} C_{2}$;
\item the orbits of $\sigma_{\phi, \partial M }$ are $\lbrace \mathcal{O}_{1}, ... , \mathcal{O}_{\ell_{X}} \rbrace$, where $\ell_{X}$ is the number of boundary components of $X$; we write $p_{k}$ the sum of the genera of the components of $\partial_{-}C_{1}$ (or equivalently of $\partial_{-} C_{2}$) within $\mathcal{O}_{k}$ and $b_{k}$ the sum of their number of boundary components; we write $c_{\mathcal{O}_{k}}$ the number of components of $\partial_{-}C_{1}$ (or equivalently of $\partial_{-} C_{2}$) in $\mathcal{O}_{k}$;
\item as $\phi$ also acts by permutation on the components of $\partial S$ within each orbit, we write $c_{\phi}^{k}$ the number of induced orbits within a given $\mathcal{O}_{k}$, and $c_{\phi} = \sum_{k=1}^{\ell_{X}} c_{\phi}^k$ the total number of orbits of the action of $\phi$ on the components of $\partial S$.
\end{itemize} 
Then $X$ admits a $(G,k;P,B)$--relative trisection, where:
\begin{itemize}
\item $G=3g+3b-c_{\phi}-2$;
\item $k=g+p+2b-1$;
\item $P=(P_{k})_{1 \leq k \leq \ell_{X}}$, with $P_{k} = p_{k} + b_{k} - c_{\mathcal{O}_{k}} - c_{\phi}^{k} +1$;
\item $B=(B_{k})_{1 \leq k \leq \ell_{X}}$, with $B_{k}=2 c_{\phi}^{k}$.
\end{itemize}
The Euler characteristic of the central surface is given by $\chi = 3(\chi(S)-b)$.
\end{Theorem}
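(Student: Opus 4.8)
The plan is to treat Proposition \ref{prop:triscase1} as given --- so that $X=X_{1}\cup X_{2}\cup X_{3}$ is already known to be a relative trisection --- and to spend the whole argument computing its six parameters. Each of them is obtained by an Euler-characteristic count on one of the explicit pieces built in Section 4, followed by an appeal to the classification of compact surfaces; the only genuine difficulty is the combinatorial bookkeeping of the monodromy orbits. I would proceed in the order: central surface (yielding $\chi$ and $G$), then the four-dimensional pieces $X_{i}$ (yielding $k$), then the pages (yielding $P$ and $B$).

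For the central surface, the proof of Proposition \ref{prop:triscase1} already identifies $X_{1}\cap X_{2}\cap X_{3}$ as three copies of $S\simeq F_{g,b}$ joined by $3b$ two-dimensional $1$--handles, so $\chi=3\chi(S)-3b=3(\chi(S)-b)$, which is the last assertion. For $G$ I need the number of boundary circles, and for that I would introduce the graph $\Gamma$ whose vertices are the $3b$ boundary circles of the three copies of $S$ and whose edges are the $3b$ bands. Each vertex has degree $2$, so $\Gamma$ is a disjoint union of cycles; tracing the construction, the band lying over the arc of the base through $0\sim 1$ is the only one twisted by the monodromy, whence the cycles of $\Gamma$ correspond bijectively to the orbits of the permutation induced by $\phi$ on the $b$ components of $\partial S$. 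There are $c_{\phi}$ such cycles, and a cycle over an orbit of size $m$ has length $3m$. A regular-neighbourhood argument --- the union of the collars of the circles in one such cycle together with the associated bands is connected, is planar since the bands are attached without twisting, and deformation-retracts onto a $1$--complex of Euler characteristic $-3m$ --- then shows that each cycle contributes exactly two boundary circles to the central surface. Hence the central surface has $2c_{\phi}$ boundary components, and the identity $2-2G-2c_{\phi}=\chi=6-6g-6b$ gives $G=3g+3b-c_{\phi}-2$.

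Next, up to the harmless drilling of tubes noted after the construction, each $X_{i}$ is a product $C_{\ast}\times I$ ($C_{\ast}=C_{1}$ or $C_{2}$) with $b$ four-dimensional $1$--handles attached, so $X_{i}\simeq\natural^{k}(S^{1}\times B^{3})$ with $k$ equal to the handlebody genus of $C_{\ast}$ plus $b$; since $\partial_{+}C_{1}=-\partial_{+}C_{2}=S$ and $\partial_{-}C_{1}\simeq\partial_{-}C_{2}$, that genus is the same for $C_{1}$ and $C_{2}$ and is read off from $2-2\,\mathrm{genus}(C_{\ast})=\chi(\partial C_{\ast})=\chi(S)+\chi(\partial_{-}C_{\ast})$, giving $k=g+p+2b-1$. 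For the pages, the surface $(X_{i}\cap X_{j})\cap\partial X$ is $\partial_{-}C_{\ast}$ with one band attached per tube meeting that part of $\partial X$ (Figure \ref{doublinterbordcas1}); it has one component over each orbit $\mathcal{O}_{k}$ of $\sigma_{\phi,\partial M}$. Over $\mathcal{O}_{k}$ I would count the attached bands, use the localised form of the central-surface count to see that this component has $B_{k}=2c_{\phi}^{k}$ boundary circles, and combine this with its Euler characteristic and connectedness to extract $P_{k}=p_{k}+b_{k}-c_{\mathcal{O}_{k}}-c_{\phi}^{k}+1$.

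The main obstacle is not any single Euler-characteristic computation but keeping the two monodromy actions cleanly separated: the action of $\phi$ on the components of $\partial M$, whose orbits index the components of $\partial X$, the pages, and the resulting open books, against the finer action of $\phi$ on the components of $\partial S$, whose orbits index the binding circles and the cycles of $\Gamma$; one must also track precisely how the $3b$ tubes close up across $0\sim 1$. The one geometric point that has to be verified rather than merely counted is that the tubes are designed so that each ball $B_{t}$ starts inside one of $C_{1}\times\{t_{i}\}$ or $C_{2}\times\{t_{i}\}$ and crosses $S$ transversely --- this is exactly what prevents the tubes from creating spurious genus in the central surface or in the pages, and hence what makes the surface-classification steps above legitimate.
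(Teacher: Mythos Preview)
Your proposal is correct and follows essentially the same approach as the paper: take the relative trisection from Proposition \ref{prop:triscase1} as given and compute each parameter by an Euler-characteristic count plus a count of boundary circles via the monodromy orbits. Your graph $\Gamma$ argument and your derivation of the handlebody genus of $C_{\ast}$ from $\chi(\partial C_{\ast})$ are more explicit than the paper's treatment, which simply asserts that ``one orbit of this permutation creates exactly two boundary components of the central surface'' and that $C_{k}\times I$ has genus $g+p+b-1$, but the underlying logic is identical.
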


\begin{proof}
We only need to compute the parameters of the trisection defined in Proposition \ref{prop:triscase1}. Each $X_{i}$ is a union of a $C_{k} \times I$ (a $4$--dimensional handlebody of genus $g+p+b-1$) and $b$ $4$--dimensional $1$--handles. Therefore $k=g+p+2b-1$. To compute the genus and number of boundary components of the triple intersection and of the pages, we use their Euler characteristic $\chi$. Recall that gluing a band by two opposite edges on a triangulated surface $S$ produces a surface $S'$ with $\chi(S')=\chi (S) -1$. As the central surface consists of three copies of $S$ joined by $3b$ bands, we obtain that its Euler characteristic is $3(\chi(S) - b)$. Using the fact that we also have $\chi (F_{G,B}) = 2-2G-B$, we can derive $G$ from $\chi$ just by counting the boundary components of the central surface. The monodromy acts on the boundary components of $S$ by permutation. One orbit of this permutation creates exactly two boundary components of the central surface. So if we denote by $c_{\phi}$ the number of orbits of the permutation, we obtain that $B=2 c_{\phi}$, and that $G=3g+3b-c_{\phi}-2$. The same applies to the genus $P_{k}$ of each component of a page. One component is constituted of $c_{\mathcal{O}_{k}}$ components of a $\partial_{-} C_{i}$ that belong to the same orbit, joined by $b_{k}$ bands, the resulting surface having $B_{k} = 2 c_{\phi}^{k}$ boundary components. We get that $P_{k}=p_{k}+b_{k}-c_{\mathcal{O}_{k}} - c_{\phi}^{k} + 1$. 
\end{proof}
\begin{Example}
If the monodromy preserves all the boundary components of $S$ and if the boundary of $M$ is connected, we obtain a $(3g+2b-2,g+p+2b-1;p,2b)$--relative trisection of $X$. 
\end{Example}

\subsection{Case 2: $\phi$ preserves a sutured Heegaard splitting of $M$}
The process and notations are the same as in the previous case. We use a decomposition as shown on Figure \ref{trisXcas2}. We will not reiterate the proof that this decomposition is indeed a relative trisection of $X$, as it is essentially the same as the proof of Proposition \ref{prop:triscase1}. 

We just outline a few facts. Now the negative boundaries of the compression bo\-dies are not necessarily diffeomorphic. Because we do need the pages $(X_{i} \cap X_{j}) \cap \partial X$ to be diffeomorphic, we split the interval $I$ into six subintervals. Therefore the genus of the relative trisection will be higher than in Case 1, as the central surface will be constituted of six copies of $S$ joined by bands. Note that the monodromy can still flip some boundary components of $S$ or $M$.

\begin{figure}[h!]
\[
\begin{tikzpicture}
\fill [red!50] (0,0) -- (3,0) -- (3,2) -- (0,2) -- (0,0);
\fill [blue!50] (3,0) -- (5,0) -- (5,2) -- (3,2) -- (3,0);
\fill [blue!50] (6,2) -- (8,2) -- (8,4) -- (6,4) -- (6,2);
\fill [red!50] (4,2) -- (6,2) -- (6,4) -- (4,4) -- (4,2);
\fill [green!50] (2,2) -- (4,2) -- (4,4) -- (2,4) -- (2,2);
\fill [blue!50] (0,2) -- (2,2) -- (2,4) -- (0,4) -- (0,2);
\fill [green!50] (5,0) -- (7,0) -- (7,2) -- (5,2) -- (5,0);
\fill [red!50] (7,0) -- (8,0) -- (8,2) -- (7,2) -- (7,0);
\draw (0,0) -- (8,0) -- (8,4) -- (0,4) -- (0,0);
\draw [dashed] (0,0) -- (0,-0.5);
\draw [dashed] (8,0) -- (8,-0.5); 
\node (a) at (0,4.2) {0};
\node (a) at (8,4.2) {1};
\node (a) at (0,-0.7) {$\phi$};
\node (a) at (8,-0.7) {$\phi$};
\fill [green!50] (4,1.9) -- (5,1.9) -- (5,2.1) -- (4,2.1) -- (4,1.9);
\fill [blue!50] (5,1.9) -- (6,1.9) -- (6,2.1) -- (5,2.1) -- (5,1.9);
\fill [green!50] (7,1.9) -- (8,1.9) -- (8,2.1) -- (7,2.1) -- (7,1.9);
\fill [blue!50] (2,1.9) -- (3,1.9) -- (3,2.1) -- (2,2.1) -- (2,1.9);
\fill [green!50] (0,1.9) -- (2,1.9) -- (2,2.1) -- (0,2.1) -- (0,1.9);
\fill [red!50] (3,1.9) -- (4,1.9) -- (4,2.1) -- (3,2.1) -- (3,1.9);
\fill [red!50] (6,1.9) -- (7,1.9) -- (7,2.1) -- (6,2.1) -- (6,1.9);
\draw (2,1.9) -- (2,4);
\draw (4,1.9) -- (4,4);
\draw (6,1.9) -- (6,4);
\draw (3,2.1) -- (3,0);
\draw (5,2.1) -- (5,0);
\draw (7,2.1) -- (7,0);
\draw (0,1.9) -- (8,1.9) -- (8,2.1) -- (0,2.1) -- (0,2.1);
\draw [stealth - stealth] (8.5,4) -- (8.5,2.5);
\draw [stealth - stealth] (8.5,0) -- (8.5,1.5); 
\node (b) at (9,2) {$S \simeq F_{g,b}$};
\node (b) at (8.8,1) {$C_{1}$};
\node (b) at (8.8,3) {$C_{2}$};
\node (a) at (3,4.2) {$t_{2}$};
\node (a) at (2,4.2) {$t_{1}$};
\node (a) at (4,4.2) {$t_{3}$};
\node (a) at (5,4.2) {$t_{4}$};
\node (a) at (6,4.2) {$t_{5}$};
\node (a) at (7,4.2) {$t_{6}$};
\node (c) at (0.4,3.5) {$X_{1}$};
\node (c) at (7.6,0.5) {$X_{2}$};
\node (c) at (7.6,3.5) {$X_{1}$};
\node (c) at (0.4,0.5) {$X_{2}$};
\node (c) at (3,3.5) {$X_{3}$};
\node (c) at (6,0.5) {$X_{3}$};
\node (c) at (4,0.5) {$X_{1}$};
\node (c) at (5,3.5) {$X_{2}$};
\draw [red] (3,2) circle (0.1cm);
\draw [red] (2,2) circle (0.1cm);
\draw [red] (4,2) circle (0.1cm);
\draw [red] (5,2) circle (0.1cm);
\draw [red] (6,2) circle (0.1cm);
\draw [red] (7,2) circle (0.1cm);
\end{tikzpicture}
\]
\caption{A relative trisection of $X$, Case 2}
\label{trisXcas2}
\end{figure}
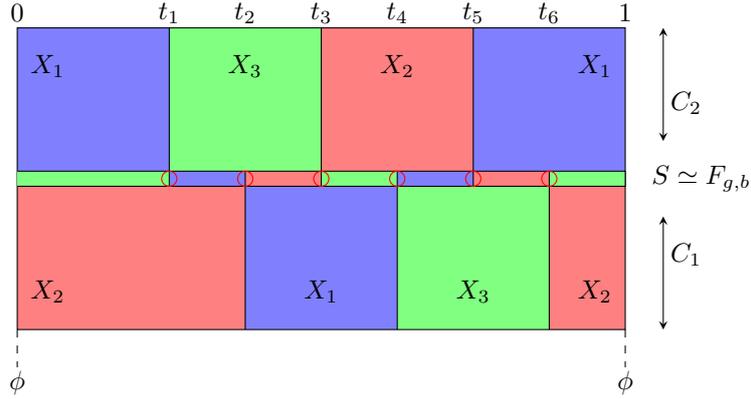

\begin{Theorem}
\label{th1}
Let $M$ be a smooth, compact, oriented, connected $3$--manifold and $\phi$ an orientation-preserving self-diffeomorphism of $M$. Let $X$ be the smooth oriented bundle over the circle with fiber $M$ and monodromy $\phi$. The monodromy acts by permutation on the connected components $\lbrace \partial^{1} M, ... , \partial^{\ell_{M}}M \rbrace$ of $\partial M$, and we call $\sigma_{\phi, \partial M }$ this action. Let $M=C_{1} \cup_{S} C_{2}$ be a sutured Heegaard splitting of $M$ that is preserved by $\phi$. We set the following notations: 
\begin{itemize}
\item the surface $S$ is of genus $g$ with $b$ boundary components;
\item we write $p_{i}$ the sum of the genera of the components of $\partial_{-}C_{i}$;
\item we denote the orbits of $\sigma_{\phi, \partial M }$ by $\lbrace \mathcal{O}_{1}, ... , \mathcal{O}_{\ell_{X}} \rbrace$, where $\ell_{X}$ is the number of boundary components of $X$; we write $p_{k,i}$ the sum of the genera of the components of $\partial_{-}C_{i}$ within $\mathcal{O}_{k}$ and $b_{k}$ the sum of their numbers of boundary components; we write $c_{\mathcal{O}_{k},i}$ the number of components of $\partial_{-}C_{i}$ in $\mathcal{O}_{k}$;
\item as $\phi$ also acts by permutation on the components of $\partial S$ within each orbit, we write $c_{\phi}^{k}$ the number of induced orbits within a given $\mathcal{O}_{k}$, and $c_{\phi} = \sum_{k=1}^{\ell_{X}} c_{\phi}^k$ the total number of orbits of the action of $\phi$ on the components of $\partial S$.
\end{itemize} 
Then $X$ admits a $(G,k;P,B)$--relative trisection, where:
\begin{itemize}
\item $G=6g+6b-c_{\phi}-5$;
\item $k=2g+p_{1} + p_{2} + 4b -3$;
\item $P=(P_{k})_{1 \leq k \leq \ell_{X}}$, with $P_{k} = p_{k,1} + p_{k,2} + 2b_{k} - c_{\mathcal{O}_{k},1}- c_{\mathcal{O}_{k},2} - c_{\phi}^{k} +1$;
\item $B=(B_{k})_{1 \leq k \leq \ell_{X}}$, with $B_{k}=2 c_{\phi}^{k}$.
\end{itemize}
The Euler characteristic of the central surface is given by $\chi = 6(\chi(S)-b)$.
\end{Theorem}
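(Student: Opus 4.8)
The proof proceeds exactly as for Theorem~\ref{th2}: the decomposition of $X$ drawn in Figure~\ref{trisXcas2} is a relative trisection by the same verification as in Proposition~\ref{prop:triscase1}, so the only task is to read its five parameters $(G,k;P,B)$ off the construction. The single structural change from Case~1 is that the base interval is now cut into \emph{six} subintervals instead of three; consequently everything that was a union of three copies of a piece of the fiber (glued cyclically, with one gluing twisted by the monodromy) becomes a union of six such copies, and, since $\phi$ only preserves — rather than flips — the splitting, the pieces now involve $\partial_{-}C_{1}$ and $\partial_{-}C_{2}$ separately rather than a common negative boundary.

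I would first compute $k$. Reading the colours of Figure~\ref{trisXcas2} and using that the identification at $0\sim1$ glues $C_{j}\times\{0\}$ to $C_{j}\times\{1\}$ (this is where $\phi(C_{j})=C_{j}$ enters), each $X_{i}$ is, up to diffeomorphism, obtained from the two $4$--dimensional handlebodies $C_{1}\times I$ and $C_{2}\times I$ by attaching $2b$ four-dimensional $1$--handles, namely the tubes assigned to $X_{i}$. As in the proof of Theorem~\ref{th2}, $C_{j}\times I$ is a $4$--dimensional handlebody of genus $g+p_{j}+b-1$; one of the $2b$ tubes connects the two pieces and the other $2b-1$ each raise the genus by one, so $k=(g+p_{1}+b-1)+(g+p_{2}+b-1)+(2b-1)=2g+p_{1}+p_{2}+4b-3$.

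Next the central surface. It is again a union of copies of $S$ joined by bands, each band being the $S$--level slice of one tube over one subinterval; with six subintervals there are six copies of $S$ and $6b$ bands, so $\chi=6\chi(S)-6b=6(\chi(S)-b)$. To get $G$ and $B$ from $\chi$ I would use $\chi(F_{G,B})=2-2G-B$ together with the same combinatorial observation as in Theorem~\ref{th2}: the bands and the circles $\partial S\times\{t_{i}\}$ organise, under the action of $\phi$ on the boundary circles of $S$, into one cyclic chain per $\phi$--orbit, and a regular neighbourhood of such a chain is an annulus, hence contributes exactly two boundary circles to the central surface. The six-fold (rather than three-fold) multiplicity does not change this, because the six copies still sit in a single cycle that closes up through the monodromy. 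Thus $B=2c_{\phi}$ and $G=6g+6b-c_{\phi}-5$.

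Finally the pages. Each component of $(X_{i}\cap X_{j})\cap\partial X$ is assembled from the components of $\partial_{-}C_{1}$ and $\partial_{-}C_{2}$ lying in one orbit $\mathcal{O}_{k}$, joined by bands — now $2b_{k}$ of them, with \emph{both} negative boundaries contributing, in contrast with Case~1. The same neighbourhood-of-a-chain argument gives $B_{k}=2c_{\phi}^{k}$, and then the Euler characteristic count yields $P_{k}=p_{k,1}+p_{k,2}+2b_{k}-c_{\mathcal{O}_{k},1}-c_{\mathcal{O}_{k},2}-c_{\phi}^{k}+1$. The step I expect to demand the most care is precisely this boundary-component bookkeeping when $\partial M$ is disconnected: one must track simultaneously the $\phi$--permutation of the components of $\partial M$, the distribution of the components of $\partial_{-}C_{1}$ and $\partial_{-}C_{2}$ among the orbits $\mathcal{O}_{k}$, and the $\phi$--orbits of the boundary circles of $S$ inside each $\mathcal{O}_{k}$, and then check that the ``one chain, one annulus, two boundary circles'' principle survives the cyclic band pattern of Figure~\ref{trisXcas2}. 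Everything else is Euler-characteristic arithmetic identical in spirit to the proof of Theorem~\ref{th2}.
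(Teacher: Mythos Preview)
Your proposal is correct and follows exactly the paper's own approach: the paper gives only a sketch, saying that the arguments are essentially the same as for Theorem~\ref{th2}, and points to the Euler-characteristic computation of the central surface $\chi=6(\chi(S)-b)$ and of the pages to read off the parameters. Your write-up does precisely this, with more detail than the paper provides; in particular your derivations of $k$, $G$, $B$, $B_{k}$ and $P_{k}$ match the intended computations.
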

\begin{proof}
We just give a sketch of proof, as the arguments are essentially the same as those produced for Theorem \ref{th2}. We can still compute the genus of this relative trisection using the Euler characteristic of the central surface. We obtain a central surface of Euler characteristic $6(\chi (S) -b)$, where $S \simeq F_{g,b}$ is the Heegaard surface of the splitting. Then we use the Euler characteristics of the components of the pages and the number of boundary components of the surfaces involved to compute the parameters featured in Theorem \ref{th1}. 
\end{proof}
\begin{Example}
For instance, if the monodromy preserves all the boundary components of $S$ and if the boundary of $M$ is connected, we obtain a $(6g+5b-5,2g+ p_{1} + p_{2} + 4b-3; p_{1} + p_{2} +b-1,2b)$--relative trisection of $X$.
\end{Example}
\begin{Remark} 
In \cite{koenig2017trisections}, Koenig builds trisections of the same (optimal) genus in both cases. His construction is less straightforward if the monodromy preserves a Heegaard splitting of $M$ and the question of its adaptability to the compact setting is open.
\end{Remark}

\section{The diagrams}
Recall that $X$ is a fiber bundle over $S^{1}$, with fiber $M=C_{1} \cup_{S} C_{2}$. Using a sutured Heegaard diagram of the fiber, we draw the diagrams corresponding to the relative trisections of Section 4, also considering the two separate cases.
\subsection{A specific compression body}
\label{productcompbody}
While constructing our relative trisections, we encountered a certain type of compression body, that we denote by $C_{g,b} = F_{g,b} \times I$, with:
\begin{itemize}
\item its positive boundary $\partial_{+} C_{g,b}$ the union of $F_{g,b} \times \lbrace 0 \rbrace$ and $F_{g,b} \times \lbrace 1 \rbrace$, with matching  boundary components joined two by two by bands;
\item its negative boundary a disjoint union of $b$ disks, each one capping off a bounda\-ry component of $\partial_{+} C_{g,b}$.
\end{itemize}
We want to define a cut system on $\partial_{+}C_{g,b}$ corresponding to this compression body. If an arc $a$ is properly embbedded in $F_{g,b}$, then $a \times I$ is a properly embedded disk in $C_{g,b}$, with boundary the curve $(a \times \lbrace 0 \rbrace ) \cup (\partial a \times I) \cup (-a \times \lbrace 1 \rbrace )$. By placing $\partial a$ on a band joining $F_{g,b} \times \lbrace 0 \rbrace$ to $F_{g,b} \times \lbrace 1 \rbrace$, we ensure that the boundary of the disk lies on $\partial_{+} C_{g,b}$. Therefore such a curve can be a candidate for our cut system. 
\begin{Remark}
\label{rmkembed}
To draw our diagrams in the next figures, we choose an embedding of $\partial_{+} C_{g,b}$ with $F_{g,b} \times \lbrace 0 \rbrace$ symmetric to $F_{g,b} \times \lbrace 1 \rbrace$: with this embedding, the curve on the previous discussion will be constituted of an arc on $F_{g,b} \times \lbrace 0 \rbrace$, connected to its symmetric on $F_{g,b} \times \lbrace 1 \rbrace$ by disjoint arcs drawn on the relevant band (or, informally, any curve on $\partial_{+}C_{g,b}$ looking symmetric in $F_{g,b} \times \lbrace 0 \rbrace$ and $F_{g,b} \times \lbrace 1 \rbrace$ bounds a properly embedded disk in $C_{g,b}$).
\end{Remark} 
We define the following cut system, represented on Figure \ref{specificcompbdycs}: 
\begin{itemize}
\item From $2g$ non isotopic and properly embedded arcs on $F_{g,b} \times \lbrace 0 \rbrace$, that cut the surface into a disk with $(b-1)$ holes, we obtain $2g$ non isotopic curves on $\partial_{+} C_{g,b}$ defined as above (in deep blue on Figure \ref{specificcompbdycs}). After performing surgery along these curves, we are down to a surface of genus $(b-1)$ with $b$ boundary components. 
\item The previous step singled out a boundary component of $F_{g,b} \times \lbrace 0 \rbrace$ (in dotted light blue on Figure \ref{specificcompbdycs}), whose glued on band was used to draw the $(\partial a \times I)$ part of the curves. We choose $(b-1)$ arcs linking this component to the others, then draw the $(b-1)$ curves obtained as above from these arcs (symmetric curves in green on Figure \ref{specificcompbdycs}). After performing surgery along these curves, we obtain a sphere with $b$ boundary components.
\item We draw $(b-1)$ curves, each one parallel to a boundary component (in red on Figure \ref{specificcompbdycs}); surgery along these curves produces the $b$ disjoint disks corres\-ponding to $\partial_{-} C_{g,b}$. 
\item As all these curves bound properly embedded disks in $C_{g,b}$, we are done. 
\end{itemize} 

\begin{figure}[h!]
\[
\begin{tikzpicture}[scale=0.4]
\node (mypic) at (0,0) {\includegraphics[scale=0.5]{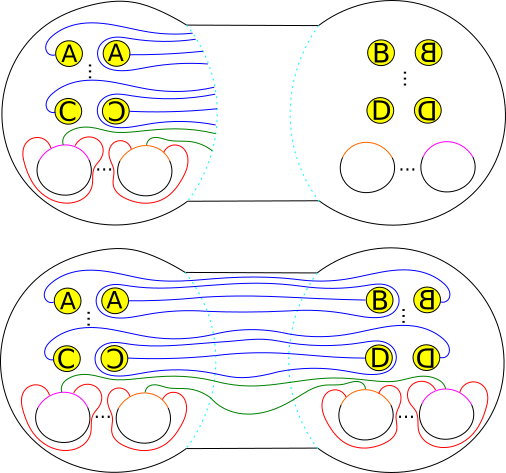}};
\end{tikzpicture}
\]
\caption{A cut system for $C_{g,b}$ \\{\footnotesize The circles bounding yellow disks are identified two by two to contribute to the genus of each copy of $F_{g,b}$; the black semi-circles represent parts of the boundary components of each copy; the coloured semi-circles (here in pink or orange) are identified by colour to form the bands; we picked one component on which we drew the connecting arcs}}
\label{specificcompbdycs}
\end{figure}

\subsection{Relative trisection diagrams, Case 1}
The monodromy flips a sutured Heegaard splitting of $M$, of which we use a sutured Heegaard diagram. Recall from Section 4 that the handlebodies $X_{i} \cap X_{j}$ are constituted of a copy of $C_{1}$ or $C_{2}$ and a handlebody $F_{g,b} \times I$, joined by $1$--handles. The central surface is constituted of three copies of $S$ joined by bands. The cut systems corresponding $C_{1}$ or $C_{2}$ can be derived from the sutured Heegaard diagram of $M$. We also need to obtain a compression body $F_{g,b} \times [t_{k},t_{k+1}] \simeq C_{g,b}$ from the last two copies of $S$ joined by bands. So we can use the results of \ref{productcompbody}, we choose the following embedding for the central surface. We label $S_{k}$ the copy $S \times \lbrace t_{k} \rbrace$. We divide the plane equally, according to three rays, $\theta_{1}$, $\theta_{2}$ and $\theta_{3}$. We draw $S_{k}$ on $\theta_{k}$, such that $S_{2}$ is the symmetric of $S_{1}$ along the ray midway between $\theta_{1}$ and $\theta_{2}$, and the symmetric of $S_{3}$ along the ray midway between $\theta_{2}$ and $\theta_{3}$ (see Figure \ref{embedsurfcase1}). The following algorithm produces a relative trisection diagram of $X$.
\begin{itemize}
\item Start with the red curves associated to $X_{1} \cap X_{2}$.
\begin{itemize}
\item At $t_{3}$, we have a copy of $C_{1}$; we draw the corresponding diagram on $S_{3}$. 
\item Between $t_{3}$ and $t_{1}$, our compression body is just a thickening of the bands joining the copies of $S$, so we do not add any curve (the same applies between $t_{2}$ and $t_{3}$) .
\item From $t_{1}$ to $t_{2}$, we have a copy of $C_{g,b}$; we draw the corresponding diagram on the union of $S_{2}$, $S_{3}$ and their linking bands, as in \ref{productcompbody}. 
\end{itemize}
\item Then we draw the blue curves corresponding to $X_{1} \cap X_{3}$ in the same fashion, remembering that this time we have a copy of $C_{1}$ at $t_{1}$ and a copy of $C_{g,b}$ from $t_{2}$ to $t_{3}$.
\item We finish with the green curves corresponding to $X_{2} \cap X_{3}$.
\begin{itemize}
\item At $t_{2}$, we have a copy of $C_{2}$; we draw the corresponding diagram on $S_{2}$. Such a diagram is symmetric to a diagram drawn on $S_{1}$ or $S_{3}$ because of the embedding of the central surface. 
\item Between $t_{2}$ and $t_{3}$, our compression body is just a thickening of the bands joining the copies of $S$, so we do not add any curve (the same applies between $t_{1}$ and $t_{2}$) .
\item From $t_{3}$ to $t_{1}$, $X_{2} \cap X_{3}$ consists in two compression bodies glued along part of their boundary, according to the monodromy $\phi$. If we consider an arc $a$ in $S_{1}$, and its image $\phi (a)$ in $S_{3}$, we can glue the two disks $(a \times [t_{1},0])$ and $(a \times [1, t_{3}])$ along $(a \times \lbrace 0 \rbrace ) \simeq (\phi (a) \times \lbrace 1 \rbrace )$. By doing so we obtain a disk in the quotient compression body, bounded by $(a \times \lbrace t_{1} \rbrace) \cup ( \partial a \times [t_{1},0]) \cup_{\phi} (\partial a \times [1,t_{3}]) \cup (- \phi (a) \times \lbrace t_{3} \rbrace)$. So if we provide a set of arcs on $S_{1}$ as in \ref{productcompbody}, we obtain, by gluing each arc to its image by $\phi$ on $S_{3}$ (with opposite direction), a cut system corresponding to the quotient compression body.  

\end{itemize}
\end{itemize}

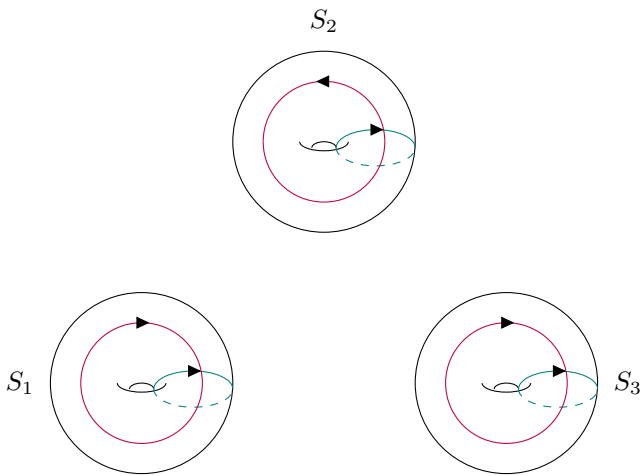
\begin{figure}[h!]
\[
\begin{tikzpicture}[scale=0.8]
\draw (0,0) circle (1.5cm);
\draw (-0.4,0) arc (180:360:0.4cm and 0.15cm);
\draw (0.2,-0.1) arc (0:180:0.2cm and 0.1cm);
\draw [dashed, teal](0.2,-0.1) arc (180:360:0.65cm and 0.3cm);
\draw [teal](0.2,-0.1) arc (180:0:0.65cm and 0.3cm)
node[
    currarrow,
    pos=0.5, 
    xscale=1,
    sloped,
    scale=1] {};
\draw [purple] (-1,0) arc (180:0:1cm)
node[
    currarrow,
    pos=0.5, 
    xscale=1,
    sloped,
    scale=1] {};
\draw [purple] (-1,0) arc (180:360:1cm);
\begin{scope}[shift={(3,4)}]
\draw (0,0) circle (1.5cm);
\draw (-0.4,0) arc (180:360:0.4cm and 0.15cm);
\draw (0.2,-0.1) arc (0:180:0.2cm and 0.1cm);
\draw [dashed, teal](0.2,-0.1) arc (180:360:0.65cm and 0.3cm);
\draw [teal](0.2,-0.1) arc (180:0:0.65cm and 0.3cm)
node[
    currarrow,
    pos=0.5, 
    xscale=1,
    sloped,
    scale=1] {};
\draw [purple] (-1,0) arc (180:0:1cm)
node[
    currarrow,
    pos=0.5, 
    xscale=-1,
    sloped,
    scale=1] {};
\draw [purple] (-1,0) arc (180:360:1cm);
\end{scope}
\begin{scope}[shift={(6,0)}]
\draw (0,0) circle (1.5cm);
\draw (-0.4,0) arc (180:360:0.4cm and 0.15cm);
\draw (0.2,-0.1) arc (0:180:0.2cm and 0.1cm);
\draw [dashed, teal](0.2,-0.1) arc (180:360:0.65cm and 0.3cm);
\draw [teal](0.2,-0.1) arc (180:0:0.65cm and 0.3cm)
node[
    currarrow,
    pos=0.5, 
    xscale=1,
    sloped,
    scale=1] {};
\draw [purple] (-1,0) arc (180:0:1cm)
node[
    currarrow,
    pos=0.5, 
    xscale=1,
    sloped,
    scale=1] {};
\draw [purple] (-1,0) arc (180:360:1cm);
\end{scope}
\node (a) at (-2,0) {$S_{1}$};
\node (a) at (3,6) {$S_{2}$};
\node (a) at (8,0) {$S_{3}$};
\end{tikzpicture}
\]
\caption{Symmetric embeddings of the copies of $S$ in the central surface, Case 1}
\label{embedsurfcase1}
\end{figure}

Notice that the red and blue curves do not depend on the monodromy. As we can always choose meridians and/or boundary parallel curves as a cut system for $C_{1}$, the red and blue curves only depend on $M$ to that extent. Figure \ref{bluegreencase1} represents these curves when the cut system corresponding to $C_{1}$ consists in a single meridian. The following examples illustrate the construction of the green curves.
\begin{figure}[h!]
\[
\begin{tikzpicture}[scale=0.8]
\node (mypic) at (0,0) {\includegraphics[scale=0.64]{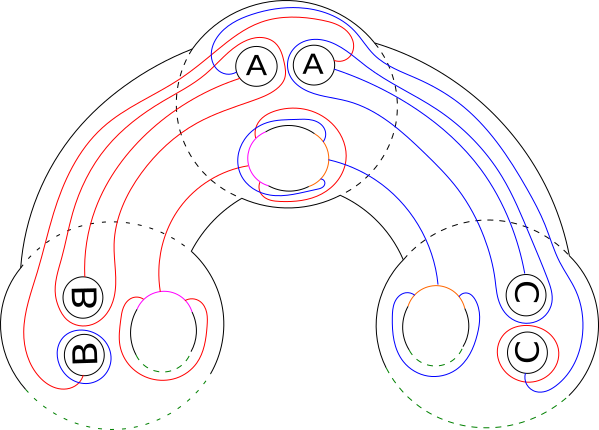}};
\node (a) at (-6.5,-4) {$S_{1}$};
\node (a) at (0,5) {$S_{2}$};
\node (a) at (6.5,-4) {$S_{3}$};
\draw [fill=white] (-1.4,-6) rectangle (1.4,-4);
\node (a) at (0,-5) {Monodromy};
\end{tikzpicture}
\]
\caption{Sets of red and blue curves, Case1\\{\footnotesize Here $S \simeq F_{1,2}$ and the cut system associated to $C_{1}$ is a single meridian; the way the dotted green arcs are identified depends on the monodromy}}
\label{bluegreencase1}
\end{figure}

\begin{Example}
\label{exflip1} 
Consider $M=(S^{2} \times I) \simeq B_{3} \setminus \mathring{B_{1}}$, where $B_{r} \subset \mathbb{R}^3$ is the $3$--ball of radius $r$ centered at the origin. If we divide $M$ along the $xz$ plane, we get a sutured Heegaard splitting of $M$, $M=C_{1} \cup_{S} C_{2}$, of genus $0$, with Heegaard surface an annulus.  
Now define the monodromy $\phi$ as the composition of the rotation of angle $\pi$ along the $z$ axis and the reflection along the sphere of radius $2$ centred at the origin. Then $\phi$ flips the sutured Heegaard splitting $M=C_{1} \cup_{S} C_{2}$ and the boundary components of $M$ and $S$. A diagram featuring the green curves corresponding to this example is drawn on Figure \ref{ex1cas1}.   
\end{Example}
\begin{figure}[h!]
\[
\begin{tikzpicture}[scale=0.6]
\node (mypic) at (0,0) {\includegraphics[scale=0.6]{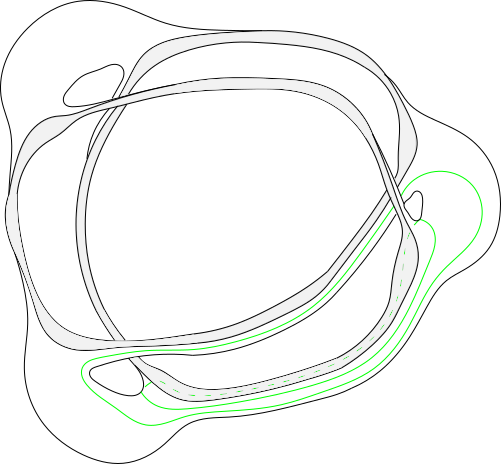}};
\node (a) at (-6.5,-4) {$S_{1}$};
\node (a) at (-7,5) {$S_{2}$};
\node (a) at (7,0) {$S_{3}$};
\end{tikzpicture}
\]
\caption{Set of green curves, Case 1, Example \ref{exflip1}}
\label{ex1cas1}
\end{figure}
\begin{Example}
\label{exflip}
Consider a Heegaard diagram for the genus $1$ Heegaard splitting $H_{1} \cup_{T^{2}} H_{2}$ of the lens space $L(2,1)$. Choose a disk disjoint from the curves on the diagram. Then the sutured Heegaard diagram obtained by removing the interior of the disk corresponds to $L(2,1)$ minus the interior of a $3$--ball $B$. We set this space as our fiber $M$. Let $\phi'$ be a diffeomorphism of $T^{2}$ that sends the meridian $m$ to $-m+2 \ell$. Then $\phi'$ extends to a diffeomorphism of $L(2,1)$, which flips the Heegaard splitting and preserves the $3$--ball $B$. This diffeomorphism restricts to $M$ and flips its sutured Heegaard splitting. We set it as our monodromy $\phi$.

To draw the arcs defined in the algorithm above, we first isotope $m$ and $\ell$ so that they meet the boundary of $S_{1}$ along two segments, then we remove those segments. We obtain two non-isotopic, properly embedded arcs drawn on $S_{1}$. To get their images under the monodromy, we proceed in the same way with the images of the meridian and longitude: $\phi(m) = -m + 2 \ell$ and $\phi(\ell) = \ell$. That gives us two oriented arcs on $S_{3}$. As the image of an arc by the monodromy must be read with opposite direction to comply with the algorithm, we connect each arc on $S_{1}$ to its image on $S_{3}$ with reversed orientation. 

The green curves are displayed on Figure \ref{mono2ex2cas1}. Actually, this works for any lens space minus the interior of a $3$--ball.  
\end{Example} 

\begin{figure}[h!]
\[
\begin{tikzpicture}[scale=0.6]
\node (mypic) at (0,0) {\includegraphics[scale=0.6]{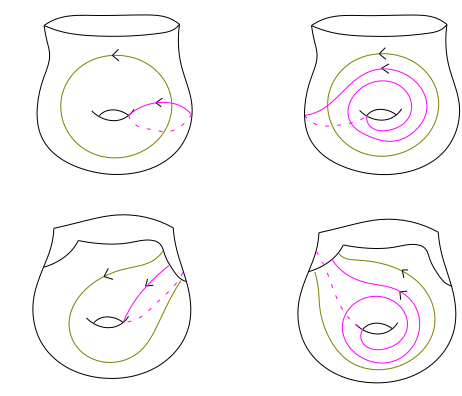}};
\node (a) at (-5.5,2) {$S_{1}$};
\node (a) at (6.5,2) {$S_{3}$};
\node (a) at (-5.5,-2) {$S_{1}$};
\node (a) at (6.5,-2) {$S_{3}$};
\end{tikzpicture}
\]
\caption{Finding the arcs on Example \ref{exflip} \\{\footnotesize Above: two essential oriented curves on $S_{1}$ and their oriented images on $S_{3}$; below: the oriented arcs obtained from these curves}}
\label{mono1ex2cas1}
\end{figure}

\begin{figure}[h!]
\[
\begin{tikzpicture}[scale=0.7]
\node (mypic) at (0,0) {\includegraphics[scale=0.56]{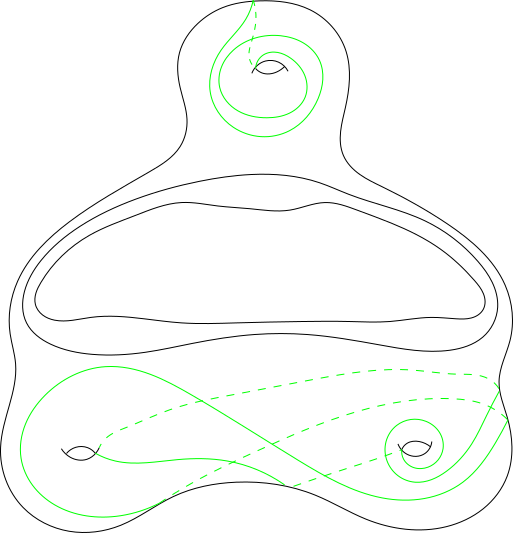}};
\node (a) at (0,6) {$S_{2}$};
\node (a) at (4,-6) {$S_{3}$};
\node (a) at (-4,-6) {$S_{1}$};
\end{tikzpicture}
\]
\caption{Green curves corresponding to Exemple \ref{exflip} \\{\footnotesize The curves on $S_{1}$ and $S_{3}$ are obtained from Figure \ref{mono1ex2cas1} by connecting each arc on $S_{1}$ to its image with reversed orientation on $S_{3}$; the curve on $S_{2}$ is the symmetric of a curve $-m + 2 \ell$ drawn on $S_{1}$ }}
\label{mono2ex2cas1}
\end{figure}

\subsection{Relative trisection diagrams, Case 2}
\label{rtdcase2}
As we proceed essentially in the same way as in the previous case, we just outline the differences: we divide the plane in six sectors, using six rays $\theta_{k}$ from the origin; we draw one copy of $S$ on each ray; we start with $S_{1}$, then draw $S_{2}$ as its symmetric along a ray midway between $\theta_{1}$ and $\theta_{2}$, and so on. With this choice of embedding, we can use the cut system for the product compression bodies $C_{g,b}$ defined in \ref{productcompbody}. Note that $S_{1}$ and $S_{6}$, between which the monodromy occurs, are now symmetric, in contrast to Case 1.

As the monodromy preserves the Heegaard splitting, we can get examples just by setting it to be the identity. Figure \ref{curvescase2} represents a relative trisection diagram for $X=M \times S^{1}$, when the fiber $M$ admits the sutured Heegaard diagram represented on Figure \ref{basediagramcase2}. If we take instead $M$ to be a knot exterior in $S^{3}$, we recover the relative trisection diagram for the product of a knot exterior with $S^{1}$ in \cite{castro2018diagrams}. 
\begin{figure}[h!]
\[
\begin{tikzpicture}
\node (mypic) at (0,0) {\includegraphics[scale=0.56]{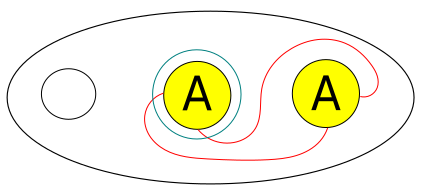}};
\end{tikzpicture}
\]
\caption{A sutured Heegaard diagram of the fiber}
\label{basediagramcase2}
\end{figure}

\begin{figure}[h!]
\[
\begin{tikzpicture}[scale=0.8]
\draw (0,0) circle (2cm);
\draw (0,0) circle (8cm);
\draw (-5,0) circle (0.5cm);
\draw (-7,0) circle (0.5cm);
\node (a) at (-5,0) {\reflectbox{A}};
\node (a) at (-7,0) {A};
\draw [purple] (-2.75,-0.43) arc (-60:60:0.5cm);
\draw [orange] (-3.25,0.43) arc (120:240:0.5cm);
\draw [black] (-2.75,0.43) arc (60:120:0.5cm);
\draw [black] (-3.25,-0.43) arc (240:300:0.5cm);
\begin{scope}[rotate=60]
\draw (-5,0) circle (0.5cm);
\draw (-7,0) circle (0.5cm);
\node[rotate=60] (a) at (-5,0) {\reflectbox{B}};
\node[rotate=60] (a) at (-7,0) {B};
\draw [pink] (-2.75,-0.43) arc (-60:60:0.5cm);
\draw [orange] (-3.25,0.43) arc (120:240:0.5cm);
\draw [black] (-2.75,0.43) arc (60:120:0.5cm);
\draw [black] (-3.25,-0.43) arc (240:300:0.5cm);
\end{scope}
\begin{scope}[rotate=120]
\draw (-5,0) circle (0.5cm);
\draw (-7,0) circle (0.5cm);
\node[rotate=120](a) at (-5,0) {\reflectbox{C}};
\node[rotate=120] (a) at (-7,0) {C};
\draw [pink] (-2.75,-0.43) arc (-60:60:0.5cm);
\draw [teal] (-3.25,0.43) arc (120:240:0.5cm);
\draw [black] (-2.75,0.43) arc (60:120:0.5cm);
\draw [black] (-3.25,-0.43) arc (240:300:0.5cm);
\end{scope}
\begin{scope}[rotate=180]
\draw (-5,0) circle (0.5cm);
\draw (-7,0) circle (0.5cm);
\node[rotate=180] (a) at (-5,0) {\reflectbox{D}};
\node[rotate=180] (a) at (-7,0) {D};
\draw [brown] (-2.75,-0.43) arc (-60:60:0.5cm);
\draw [teal] (-3.25,0.43) arc (120:240:0.5cm);
\draw [black] (-2.75,0.43) arc (60:120:0.5cm);
\draw [black] (-3.25,-0.43) arc (240:300:0.5cm);
\end{scope}
\begin{scope}[rotate=240]
\draw (-5,0) circle (0.5cm);
\draw (-7,0) circle (0.5cm);
\node[rotate=-120] (a) at (-5,0) {\reflectbox{E}};
\node[rotate=-120] (a) at (-7,0) {E};
\draw [brown] (-2.75,-0.43) arc (-60:60:0.5cm);
\draw [yellow] (-3.25,0.43) arc (120:240:0.5cm);
\draw [black] (-2.75,0.43) arc (60:120:0.5cm);
\draw [black] (-3.25,-0.43) arc (240:300:0.5cm);
\end{scope}
\begin{scope}[rotate=300]
\draw (-5,0) circle (0.5cm);
\draw (-7,0) circle (0.5cm);
\node[rotate=-60] (a) at (-5,0) {\reflectbox{F}};
\node[rotate=-60] (a) at (-7,0) {F};
\draw [purple] (-2.75,-0.43) arc (-60:60:0.5cm);
\draw [yellow] (-3.25,0.43) arc (120:240:0.5cm);
\draw [black] (-2.75,0.43) arc (60:120:0.5cm);
\draw [black] (-3.25,-0.43) arc (240:300:0.5cm);
\end{scope}
\draw [green] (4:7) arc [radius=7, start angle=4, end angle=56];
\draw [green] (5.5:5) arc [radius=5, start angle=5.5, end angle=54.5];
\draw [green] (0:4.3) arc [radius=4.3, start angle=0, end angle=60];
\draw [green] (0:5.7) arc [radius=5.7, start angle=0, end angle=60];
\draw [green] (4.3,0) arc (180:360:0.7cm);
\draw [green] (2.15,3.72) arc (240:60:0.7cm);
\draw [green] (2:2.3) arc [radius=2.3, start angle=2, end angle=58];
\draw [green] (2.3,0.1) arc (180:270:0.2cm);
\draw [green] (1.24,1.95) arc (270:120:0.14cm);
\begin{scope}[rotate=60] 
\draw [red] (4:7) arc [radius=7, start angle=4, end angle=56];
\draw [red] (5.5:5) arc [radius=5, start angle=5.5, end angle=54.5];
\draw [red] (0:4.3) arc [radius=4.3, start angle=0, end angle=60];
\draw [red] (0:5.7) arc [radius=5.7, start angle=0, end angle=60];
\draw [red] (4.3,0) arc (180:360:0.7cm);
\draw [red] (2.15,3.72) arc (240:60:0.7cm);
\draw [red] (2:3.7) arc [radius=3.7, start angle=2, end angle=58];
\draw [red] (3.7,0.15) arc (0:-90:0.2cm);
\draw [red] (2,3.12) arc (50:180:0.14cm);
\end{scope}
\begin{scope}[rotate=120] 
\draw [blue] (4:7) arc [radius=7, start angle=4, end angle=56];
\draw [blue] (5.5:5) arc [radius=5, start angle=5.5, end angle=54.5];
\draw [blue] (0:4.3) arc [radius=4.3, start angle=0, end angle=60];
\draw [blue] (0:5.7) arc [radius=5.7, start angle=0, end angle=60];
\draw [blue] (4.3,0) arc (180:360:0.7cm);
\draw [blue] (2.15,3.72) arc (240:60:0.7cm);
\draw [blue] (2.55,-0.2) .. controls (2,-0.2) and (3.5,-1.5) .. (3.6,0);
\draw [blue] (2.55,0.2) .. controls (2,0.2) and (3.5,1.5) .. (3.6,0);
\draw [blue] (2:2.3) arc [radius=2.3, start angle=2, end angle=58];
\draw [blue] (2.3,0.1) arc (180:270:0.2cm);
\draw [blue] (1.24,1.95) arc (270:120:0.14cm);
\end{scope}
\begin{scope}[rotate=180] 
\draw [green] (4:7) arc [radius=7, start angle=4, end angle=56];
\draw [green] (5.5:5) arc [radius=5, start angle=5.5, end angle=54.5];
\draw [green] (0:4.3) arc [radius=4.3, start angle=0, end angle=60];
\draw [green] (0:5.7) arc [radius=5.7, start angle=0, end angle=60];
\draw [green] (4.3,0) arc (180:360:0.7cm);
\draw [green] (2.15,3.72) arc (240:60:0.7cm);
\draw [green] (2:3.7) arc [radius=3.7, start angle=2, end angle=58];
\draw [green] (3.7,0.15) arc (0:-90:0.2cm);
\draw [green] (2,3.12) arc (50:180:0.14cm);
\end{scope}
\begin{scope}[rotate=240]
\draw [red] (4:7) arc [radius=7, start angle=4, end angle=56];
\draw [red] (5.5:5) arc [radius=5, start angle=5.5, end angle=54.5];
\draw [red] (0:4.3) arc [radius=4.3, start angle=0, end angle=60];
\draw [red] (0:5.7) arc [radius=5.7, start angle=0, end angle=60];
\draw [red] (4.3,0) arc (180:360:0.7cm);
\draw [red] (2.15,3.72) arc (240:60:0.7cm);
\draw [red] (2:2.3) arc [radius=2.3, start angle=2, end angle=58];
\draw [red] (2.3,0.1) arc (180:270:0.2cm);
\draw [red] (1.24,1.95) arc (270:120:0.14cm);
\end{scope}
\begin{scope}[rotate=300]
\draw [blue] (4:7) arc [radius=7, start angle=4, end angle=56];
\draw [blue] (5.5:5) arc [radius=5, start angle=5.5, end angle=54.5];
\draw [blue] (0:4.3) arc [radius=4.3, start angle=0, end angle=60];
\draw [blue] (0:5.7) arc [radius=5.7, start angle=0, end angle=60];
\draw [blue] (4.3,0) arc (180:360:0.7cm);
\draw [blue] (2.15,3.72) arc (240:60:0.7cm);
\draw [blue] (2:3.7) arc [radius=3.7, start angle=2, end angle=58];
\draw [blue] (3.7,0.15) arc (0:-90:0.2cm);
\draw [blue] (2,3.12) arc (50:180:0.14cm);
\end{scope}
\draw [green] (2.55,-0.2) .. controls (2,-0.2) and (3.5,-1.5) .. (3.6,0);
\draw [green] (2.55,0.2) .. controls (2,0.2) and (3.5,1.5) .. (3.6,0);
\begin{scope}[rotate=60]
\draw [green] (2.55,-0.2) .. controls (2,-0.2) and (3.5,-1.5) .. (3.6,0);
\draw [green] (2.55,0.2) .. controls (2,0.2) and (3.5,1.5) .. (3.6,0);
\end{scope}
\begin{scope}[rotate=240]
\draw [green] (3.45,-0.2) .. controls (3.9,-0.2) and (2.47,-1.1) .. (2.45,0);
\draw [green] (3.45,0.2) .. controls (3.9,0.2) and (2.47,1.1) .. (2.45,0);
\end{scope}
\begin{scope}[rotate=180]
\draw [green] (3.45,-0.2) .. controls (3.9,-0.2) and (2.47,-1.1) .. (2.45,0);
\draw [green] (3.45,0.2) .. controls (3.9,0.2) and (2.47,1.1) .. (2.45,0);
\end{scope}
\begin{scope}[rotate=180]
\draw [blue] (2.55,-0.2) .. controls (2,-0.2) and (3.5,-1.5) .. (3.6,0);
\draw [blue] (2.55,0.2) .. controls (2,0.2) and (3.5,1.5) .. (3.6,0);
\end{scope}
\begin{scope}[rotate=60]
\draw [red] (3.45,-0.2) .. controls (3.9,-0.2) and (2.47,-1.1) .. (2.45,0);
\draw [red] (3.45,0.2) .. controls (3.9,0.2) and (2.47,1.1) .. (2.45,0);
\end{scope}
\begin{scope}[rotate=120]
\draw [red] (3.45,-0.2) .. controls (3.9,-0.2) and (2.47,-1.1) .. (2.45,0);
\draw [red] (3.45,0.2) .. controls (3.9,0.2) and (2.47,1.1) .. (2.45,0);
\end{scope}
\begin{scope}[rotate=240]
\draw [red] (2.55,-0.2) .. controls (2,-0.2) and (3.5,-1.5) .. (3.6,0);
\draw [red] (2.55,0.2) .. controls (2,0.2) and (3.5,1.5) .. (3.6,0);
\begin{scope}[rotate=60]
\draw [red] (2.55,-0.2) .. controls (2,-0.2) and (3.5,-1.5) .. (3.6,0);
\draw [red] (2.55,0.2) .. controls (2,0.2) and (3.5,1.5) .. (3.6,0);
\end{scope}
\end{scope}
\begin{scope}[rotate=300]
\draw [blue] (3.45,-0.2) .. controls (3.9,-0.2) and (2.47,-1.1) .. (2.45,0);
\draw [blue] (3.45,0.2) .. controls (3.9,0.2) and (2.47,1.1) .. (2.45,0);
\end{scope}
\draw [blue] (3.45,-0.2) .. controls (3.9,-0.2) and (2.47,-1.1) .. (2.45,0);
\draw [blue] (3.45,0.2) .. controls (3.9,0.2) and (2.47,1.1) .. (2.45,0);
\node (a) at (240:8.5) {$S_{1}$};
\node (a) at (180:8.5) {$S_{2}$};
\node (a) at (120:8.5) {$S_{3}$};
\node (a) at (60:8.5) {$S_{4}$};
\node (a) at (0:8.5) {$S_{5}$};
\node (a) at (300:8.5) {$S_{6}$};
\draw [blue] (60:7) circle (0.7cm);
\draw [red] (180:7) circle (0.7cm);
\draw [green] (300:7) circle (0.7cm);
\draw [blue] (245.5:5.1) .. controls (260:6) and (250:8) .. (240:7.5);
\draw [blue] (244:6.9) .. controls (250:6.8) and (250:6.25) .. (240:6) .. controls (232:5.75) and (230.5:5.25) .. (230:5) .. controls (231:4.6) and (236:4.45) .. (240:4.5);
\begin{scope}[rotate=120]
\draw [red] (245.5:5.1) .. controls (260:6) and (250:8) .. (240:7.5);
\draw [red] (244:6.9) .. controls (250:6.8) and (250:6.25) .. (240:6) .. controls (232:5.75) and (230.5:5.25) .. (230:5) .. controls (231:4.6) and (236:4.45) .. (240:4.5);
\end{scope}
\begin{scope}[rotate=240]
\draw [green] (245.5:5.1) .. controls (260:6) and (250:8) .. (240:7.5);
\draw [green] (244:6.9) .. controls (250:6.8) and (250:6.25) .. (240:6) .. controls (232:5.75) and (230.5:5.25) .. (230:5) .. controls (231:4.6) and (236:4.45) .. (240:4.5);
\end{scope}
\end{tikzpicture}
\]
\caption{A relative trisection diagram of $X=S^{1} \times M$, where $M$ is defined by the sutured Heegaard diagram of Figure \ref{basediagramcase2}}
\label{curvescase2}
\end{figure}
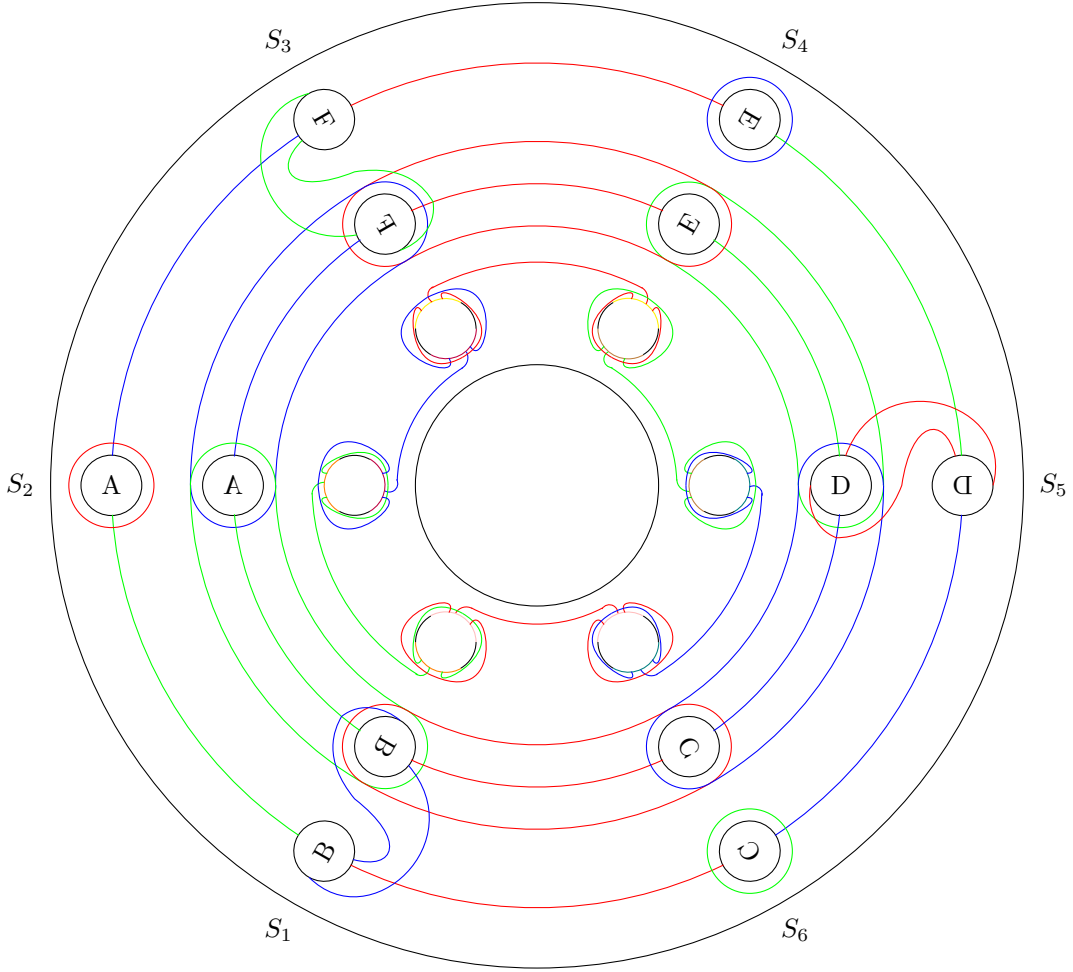
\begin{Remark}
In \cite{koenig2017trisections}, the trisection diagram obtained when the monodromy preserved the Heegaard splitting of the fiber could be destabilised. It would be interesting to see if the relative trisection diagram (of higher genus) obtained in the compact case can be destabilised as well.
\end{Remark} 
\section{Applications}
Relative trisections can be combined to produce trisected closed manifolds. This aspect of the theory was developed in \cite{castro2018trisections} and \cite{castro2017trisections}. In what follows, we consider only manifolds with connected boundaries. Our goal, in this section, is to use the results established in \cite{castro2017trisections} in order to glue  relatively trisected bundles over the circle to other relatively trisected $4$--manifolds, thus producing classes of examples of trisected (closed) $4$--manifolds. We recover the trisected closed bundles over the circle of \cite{koenig2017trisections}, the trisected spun manifolds of \cite{meier2017trisections}, and we also provide trisections for some classes of $4$--dimensional open-books. 
\subsection{Gluing relative trisection diagrams: theoretical aspects}  
\begin{Definition}
\label{defcutsysarcs} 
Let $S$ be a compact surface and $\alpha$ a cut system of curves on $S$. Denote by $S_{\alpha}$ the surface obtained by surgering $S$ along the $\alpha$ curves (that is, by cutting $S$ along the $\alpha$ curves and gluing back a disk along every newly created boundary component). An \emph{arc system} associated to $\alpha$ is a family of properly embedded arcs in $S$, such that cutting $S_{\alpha}$ along $A_{\alpha}$ produces a disk. We say that two cut systems together with their associated arcs $(\alpha, A_{\alpha})$ and $(\alpha ', A_{\alpha '})$ are \emph{handleslide equivalent} if one can be obtained from the other by sliding curves over curves and arcs over curves.  
\end{Definition}
\begin{Definition}
\label{defarccutsys}
An arced diagram is a tuple $(S; \alpha, \beta, \gamma, A_{\alpha} , A_{\beta}, A_{\gamma})$ such that:
\begin{itemize}
\item $(S; \alpha, \beta, \gamma)$ is a relative trisection diagram;
\item $A_{\alpha}$ (resp. $A_{\beta}$, $A_{\gamma}$) is an arc system associated to $\alpha$ (resp. $\beta$, $\gamma$);
\item $(S; \alpha, \beta , A_{\alpha} , A_{\beta})$ is handleslide equivalent to some $(S; \alpha ', \beta ' , A_{\alpha '} , A_{\beta '})$, where $(S; \alpha ', \beta ')$ is diffeomorphic to a standard sutured Heegaard diagram $D$, and $A_{\alpha '} = A_{\beta '}$;
\item $(S;  \beta , \gamma, A_{\beta} , A_{\gamma})$ is handleslide equivalent to some $(S; \beta ', \gamma ' , A_{\beta '} , A_{\gamma '})$, where $(S; \beta ', \gamma ')$ is diffeomorphic to the same standard sutured Heegaard diagram $D$ and $A_{\beta '} = A_{\gamma '}$.
\end{itemize} 
\end{Definition}
\begin{Remark}
\label{rmkarc}
This intrinsic definition follows \cite{gay2022doubly}. To explicitly draw an arced diagram from a relative trisection diagram, we apply the algorithm described in \cite{castro2018diagrams}.
\begin{itemize}
\item Choose an arc system $A_\alpha$ associated to $\alpha$.
\item There is a collection of arcs $A_{\beta}$, obtained from $A_\alpha$ by sliding $A_\alpha$ arcs over $\alpha$ curves, and a set of curves $\tilde{\beta}$, obtained from $\beta$ by handleslides, such that $A_\beta$ and $\tilde{\beta}$ are disjoint. 
\item There is a collection of arcs $A_{\gamma}$, obtained from $A_\beta$ by sliding $A_\beta$ arcs over $\tilde{\beta}$ curves, and a set of curves $\tilde{\gamma}$, obtained from $\gamma$ by handleslides, such that $A_\gamma$ and $\tilde{\gamma}$ are disjoint.
\end{itemize}
Then $(S; \alpha, \tilde{\beta} , \tilde{\gamma}, A_{\alpha}, A_{\beta}, A_{\gamma})$ is an arced diagram obtained from $(S; \alpha, \beta, \gamma)$.

That we can always find the advertised arcs and curves follows directly from the fact that $(S, \alpha, \beta)$, $(S, \beta, \gamma)$ and $(S, \alpha, \gamma)$ are standard. We make many choices when constructing the arcs and curves systems, but (\cite{castro2018trisections}) these choices do not affect the final result: two arced diagrams obtained from the same relative trisection diagram are handle\-slide equivalent. 
\end{Remark}
\begin{Remark}
The diagram $(S; \alpha, \beta, \gamma)$ describes a relatively trisected $4$--manifold. The relative trisection induces an open-book decomposition of the boundary of the manifold, and the monodromy of this open-book can be explicitly computed using an associated arced diagram (see \cite{castro2018diagrams} or \cite{castro2018trisections}).
\end{Remark}
Let $X$ and $X'$ be relatively trisected compact $4$--manifolds, with associated re\-lative trisection diagrams $(S; \alpha, \beta, \gamma)$ and $(S; \alpha ', \beta ', \gamma ')$, inducing the open book decomposition $\mathcal{OB}$ on $\partial X$ and $\mathcal{OB'}$ on $\partial X'$. 
Suppose that there exists an orientation reversing diffeomorphism $f$ from $\partial X$ to $\partial X'$ that takes $\mathcal{OB}$ to $\mathcal{OB'}$.\\
Construct an arced diagram $(S; \alpha,\tilde{\beta}, \tilde{\gamma} , A_\alpha , A_\beta , A_\gamma )$ from $(S; \alpha, \beta, \gamma)$. As $f(A_\alpha )$ cuts $S_{\alpha'} = f(S_\alpha )$ into a disk, construct an arced diagram $(S; \alpha ',\tilde{\beta '}, \tilde{\gamma '} , A_{\alpha '}, A_{\beta '} , A_{\gamma  '})$ from $(S, \alpha ', \beta ', \gamma ')$, starting from $A_{\alpha '}=f(A_\alpha ) $ and following the process described in \ref{rmkarc}.\\
Glue $S$ and $S'$ along their boundary, according to $f$. The process above associates to every arc in $A_\alpha$ (resp. $A_\beta$, $A_\gamma$) an arc in $A_{\alpha '}$ (resp. $A_{\beta '}$, $A_{\gamma '}$). Connect every arc to its associate, thus creating three families of curves $A_\alpha \cup A_{\alpha '}$, $A_\beta \cup A_{\beta '}$, $A_\gamma \cup A_{\gamma '}$ on $S \cup_{f} S'$.
\begin{Theorem}[Castro--Gay--Pinz\'{o}n-Caicedo]
\label{thmgluerel}
With the notations of the discussion above, $\big(S \cup_{f} S' ; \alpha \cup \alpha' \cup (A_\alpha \cup A_{\alpha '}), \tilde{\beta} \cup \tilde{\beta '} \cup (A_\beta \cup A_{\beta '}), \tilde{\gamma} \cup \tilde{\gamma '} \cup (A_\gamma \cup A_{\gamma '}) \big)$ is a trisection diagram for $X \cup_{f} X'$.
\end{Theorem}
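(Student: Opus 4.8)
The plan is to realise the tuple in the statement as the diagram of an \emph{explicitly constructed} trisection of $X\cup_f X'$, and then appeal to the one‑to‑one correspondence between trisection diagrams and trisected $4$--manifolds. First I would build the trisection geometrically. Using that $f$ carries $\mathcal{OB}$ to $\mathcal{OB'}$, after an isotopy (post‑composing with a rotation of the $S^{1}$--factor of the boundary open books and relabelling the three sectors) I may assume $f(X_{i}\cap\partial X)=X_{i}'\cap\partial X'$ for $i=1,2,3$, so that $f$ identifies the two copies of $P\times I$ in each sector. Set $Z=X\cup_{f}X'$, $Z_{i}=X_{i}\cup_{f}X_{i}'$, $C''_{i,j}=C_{i,j}\cup_{f}C'_{i,j}$, and $\Sigma=S\cup_{f}S'$, the last gluing taking place along the common binding $\partial S\cong\partial S'$. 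Then $\Sigma$ is a closed surface and $Z_{1}\cap Z_{2}\cap Z_{3}=\Sigma$; each $C''_{i,j}$ is obtained by gluing two compression bodies along their negative boundaries $P\cong_{f}P'$, hence is $(P\times I)\cup(\text{$1$--handles})$, i.e. a $3$--dimensional handlebody, with $\partial C''_{i,j}=\Sigma$ and $\partial Z_{i}=C''_{i-1,i}\cup_{\Sigma}C''_{i,i+1}$. The delicate local point — and, in my view, the step most likely to need care — is that $Z_{i}$ is again a $4$--dimensional $1$--handlebody: because the gluing region $X_{i}\cap\partial X\cong P\times I$ is a trivial product sitting in $\partial X_{i}$, one can rebuild $Z_{i}$ from a single $0$--handle attaching only $1$--handles (absorbing the $1$--handles of $X_{i}$, the product collar, and the $1$--handles of $X_{i}'$), so that $Z_{i}\cong\natural^{k_{i}''}(S^{1}\times B^{3})$. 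Granting these, $Z=Z_{1}\cup Z_{2}\cup Z_{3}$ satisfies the axioms of a closed trisection with central surface $\Sigma$.

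Next I would identify the cut systems of this trisection on $\Sigma$. Consider the pair $C''_{3,1}$. The curves $\alpha$ (on $S$) and $\alpha'$ [here the $\alpha$-curves of $X'$, call them $\alpha^{*}$] on $S'$ bound disks in $C_{3,1}$ and $C'_{3,1}$ respectively, and surgering $\Sigma$ along $\alpha\cup\alpha^{*}$ yields $P\cup_{f}P'$. For the remaining curves, take a properly embedded arc $a\in A_{\alpha}$ in $P$ and its image $a^{*}=f(a)\in A_{\alpha^{*}}$: the disks $a\times I\subset C_{3,1}$ and $a^{*}\times I\subset C'_{3,1}$ glue along $a\times\{0\}=a^{*}\times\{0\}$ into one disk in $C''_{3,1}$ whose boundary is the closed curve on $\Sigma$ obtained by joining the arc $a\times\{1\}\subset S$ to $a^{*}\times\{1\}\subset S'$ across the binding — that is, exactly the curve "$a$ connected to its associate" of the construction preceding the statement. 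Since $A_{\alpha}$ cuts $P$ into a disk and $A_{\alpha^{*}}$ cuts $P'$ into a disk, an Euler characteristic count shows $\alpha\cup\alpha^{*}\cup(A_{\alpha}\cup A_{\alpha^{*}})$ surgers $\Sigma$ to $S^{2}$, so it is a cut system for $C''_{3,1}$. Running the same argument with $(\tilde\beta,A_{\beta})$ and $(\tilde\gamma,A_{\gamma})$ identifies the cut systems of $C''_{1,2}$ and $C''_{2,3}$. Hence the trisection diagram of $Z$ is precisely the tuple in the statement.

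It remains to check that this tuple really is a \emph{trisection diagram}, i.e. that each of its three pairs is handleslide‑equivalent to a standard genus--$g$ diagram of some $\#^{k}(S^{1}\times S^{2})$; this is where the arced hypothesis of Definition \ref{defarccutsys} does the real work. By that definition, $(S;\alpha,\tilde\beta,A_{\alpha},A_{\beta})$ is handleslide‑equivalent to some $(S;\alpha',\beta',A_{\alpha'},A_{\beta'})$ with $(S,\alpha',\beta')$ a standard sutured Heegaard diagram and $A_{\alpha'}=A_{\beta'}$, and likewise for $(S';\alpha^{*},\tilde\beta^{*},A_{\alpha^{*}},A_{\beta^{*}})$; moreover these reductions can be carried out compatibly with $f$, keeping $A_{\alpha'^{*}}=f(A_{\alpha'})$ throughout — arranging this compatibility, together with the $1$--handlebody point above, is where I expect the technicalities to concentrate. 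Handleslides on $S$ and on $S'$ extend to handleslides on $\Sigma$, so the $\alpha$-- and $\tilde\beta$--systems of the glued diagram become $\alpha'\cup\alpha'^{*}\cup\mathcal{A}$ and $\beta'\cup\beta'^{*}\cup\mathcal{A}$, where $\mathcal{A}:=A_{\alpha'}\cup A_{\alpha'^{*}}=A_{\beta'}\cup A_{\beta'^{*}}$ is the \emph{same} curve system on $\Sigma$. On the $S^{3}$-- and $S^{1}\times S^{2}$--summands of the two standard sutured diagrams this is a standard picture; on the $F_{p,b}$--summands, which glue across the binding to a closed genus $2p+b-1$ surface carrying $\mathcal{A}$ in \emph{both} systems, a Heegaard diagram whose two cut systems coincide describes $\#^{2p+b-1}(S^{1}\times S^{2})$. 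Thus the glued $\alpha$--$\tilde\beta$ pair is a standard diagram of a connected sum of $S^{1}\times S^{2}$'s, and symmetrically for the $\tilde\beta$--$\tilde\gamma$ and $\alpha$--$\tilde\gamma$ pairs.

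Therefore the tuple is a genuine trisection diagram, and by the previous two paragraphs it is the diagram of the trisection $Z=Z_{1}\cup Z_{2}\cup Z_{3}$ of $X\cup_{f}X'$. Since a trisection diagram determines its trisected $4$--manifold up to diffeomorphism, this completes the proof. The two points I would expect to cost the most effort are the verification that each $Z_{i}$ is a $4$--dimensional $1$--handlebody (exploiting the product structure of the gluing region $P\times I$) and the bookkeeping needed to keep the handleslide reductions on $S$ and $S'$ compatible with the gluing $f$ so that the arc systems $\mathcal{A}$ in the two final cut systems literally agree.
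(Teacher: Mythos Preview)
The paper does not give its own proof of this theorem: it is stated as a result of Castro--Gay--Pinz\'{o}n-Caicedo, and immediately afterwards the paper only remarks that ``Proofs of Theorem \ref{thmgluerel} can be found in \cite{castro2017trisections} and \cite{castro2018diagrams}'', together with the resulting trisection parameters. So there is no in-paper argument to compare your proposal against.

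That said, your outline is essentially the argument one finds in those references: glue the pieces $X_{i}\cup_{f}X_{i}'$ along the matched product regions $P\times I$ to get a candidate trisection, check each glued piece is a $4$--dimensional $1$--handlebody, read off cut systems for the glued handlebodies $C''_{i,j}$ as curves from each side together with the arc pairs crossing the binding, and use the arced hypothesis to see each pair of cut systems is standard. Your identification of the two genuinely nontrivial points --- that $Z_{i}$ is a $1$--handlebody (which uses the product structure of the gluing region in $\partial X_{i}$) and the bookkeeping needed to make the handleslide reductions on the two sides compatible via $f$ --- is accurate; these are exactly the places where the cited proofs do the work. One small caution on notation: in the paper's statement the primes on $\alpha',\beta',\gamma'$ already denote the curve systems on $S'$, so your renaming to $\alpha^{*}$ etc.\ is wise to avoid clashing with the ``primed'' curves appearing in Definition \ref{defarccutsys}.
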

\begin{Remark}
\label{rmkglu}
Proofs of Theorem \ref{thmgluerel} can be found in \cite{castro2017trisections} and \cite{castro2018diagrams}. More precisely, it is shown that gluing $(g,k,p,b)$ and $(g',k',p',b')$ relative trisections produces a $\big( g+g'+b-1,k+k'- (2p+b-1) \big)$--trisection of $X \cup_{f} X'$, in the case of connected boundaries.
\end{Remark}
\subsection{Examples}
We now use Theorem \ref{thmgluerel} to construct three classes of trisection diagrams from re\-lative trisection diagrams. By doing so, we recover some trisections diagram of \cite{koenig2017trisections} and \cite{meier2017trisections}, and we also produce some classes of trisection diagrams of $4$--dimensional open-books, that, to our current knowledge, do not yet exist in the literature.
\subsubsection{Koenig's trisections of bundles over the circle}
We consider a closed bundle over the circle of fiber $M$, where the monodromy preserves a genus $g$ Heegaard splitting of the fiber. Consider the compact $3$--manifold $M^o = M \setminus Int(B^{3})$, where $B^3$ is a $3$--ball transverse to the Heegaard surface of the splitting and preserved by the monodromy. Then the monodromy descends to $M^o$ and we obtain a bundle over $S^1$ of fiber $M^o$, of which we can draw a relative trisection diagram as in Section 5. Actually, because the boundary of $M^o$ is just $S^{2}$, we can use a lower genus (unbalanced) relative trisection of the bundle based on the schematic of Figure \ref{decomps1Mo}. An example of a relative trisection diagram corresponding to this decomposition is featured on Figure \ref{arceddiags1Mo}, with the associated systems of arcs (on the diagram, we assume the monodromy to be the identity, but this doesn't affect our future conside\-rations. We start from the Heegaard diagram for $L(2,1)$ featured on Figure \ref{hdl21}, left; by removing an open disk disjoint from the curves on the Heegaard surface, we obtain a sutured Heegaard diagram for $L(2,1)^o$). To obtain our closed bundle over the circle of fiber $M$, we must glue back in a relatively trisected $S^1 \times B^{3}$, of which an arced diagram is just a cylinder with three parallel arcs as in Figure \ref{arceddiags1b3}. As the pages and arcs defined by these diagrams agree, all we need to do is to glue the two arced diagrams together according to the identity. We obtain a $(4g+1)$--trisection diagram of the closed bundle (see Figure \ref{koenigdiagramg5}), equivalent to the diagram featured in \cite{koenig2017trisections}. This diagram can be destabilised $g$ times to obtain $(3g+1)$--trisection diagram (for that last point, see \cite{koenig2017trisections}).   
\begin{figure}[h!]
\[
\begin{tikzpicture}[scale=0.5]
\node (mypic) at (0,0) {\includegraphics[scale=0.5]{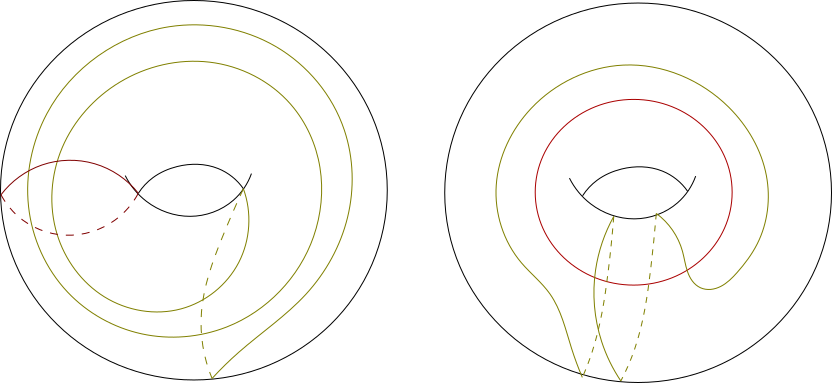}};
\end{tikzpicture}
\]
\caption{Two Heegaard diagrams for $L(2,1)$}
\label{hdl21}
\end{figure}

\begin{figure}[h!]
\[
\begin{tikzpicture}[scale=0.6]
\draw (0,0) -- (8,0) -- (8,4) -- (0,4) -- (0,0) -- cycle;
\draw (0,1.9) -- (8,1.9);
\draw (0,2.1) -- (8,2.1);
\draw (2,2.1) -- (2,4);
\draw (3,0) -- (3,1.9);
\draw (5,0) -- (5,1.9);
\draw (6,2.1) -- (6,4);
\node (a) at (0.6,0.6) {$X_{3}$};
\node (a) at (3.6,0.6) {$X_{1}$};
\node (a) at (7.4,0.6) {$X_{3}$};
\node (a) at (0.6,3.6) {$X_{1}$};
\node (a) at (3.6,3.6) {$X_{2}$};
\node (a) at (7.4,3.6) {$X_{1}$};
\end{tikzpicture}
\]
\caption{A trisection of the bundle over $S^1$ of fiber $M^o$}
\label{decomps1Mo}
\end{figure}

\begin{figure}[h!]
\[
\begin{tikzpicture}[scale=0.5]
\node (mypic) at (0,0) {\includegraphics[scale=0.5]{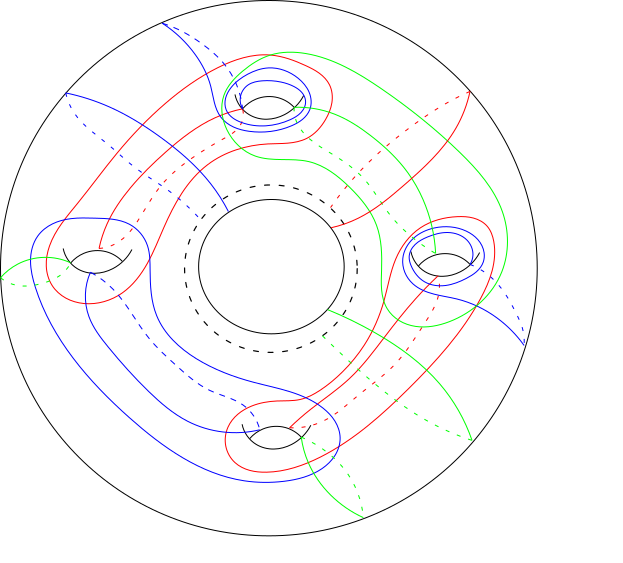}};
\end{tikzpicture}
\]
\caption{A genus $4$ arced diagram for $S^1 \times M^o$, when $M \simeq L(2,1)$}
\label{arceddiags1Mo}
\end{figure}

\begin{figure}[h!]
\[
\begin{tikzpicture}[scale=0.4]
\node (mypic) at (0,0) {\includegraphics[scale=0.6]{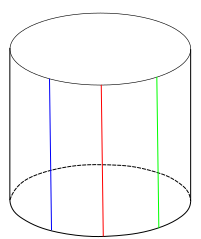}};
\end{tikzpicture}
\]
\caption{An arced diagram for $S^{1} \times B^3$}
\label{arceddiags1b3}
\end{figure}

\begin{figure}[h!]
\[
\begin{tikzpicture}[scale=0.6]
\node (mypic) at (0,0) {\includegraphics[scale=0.5]{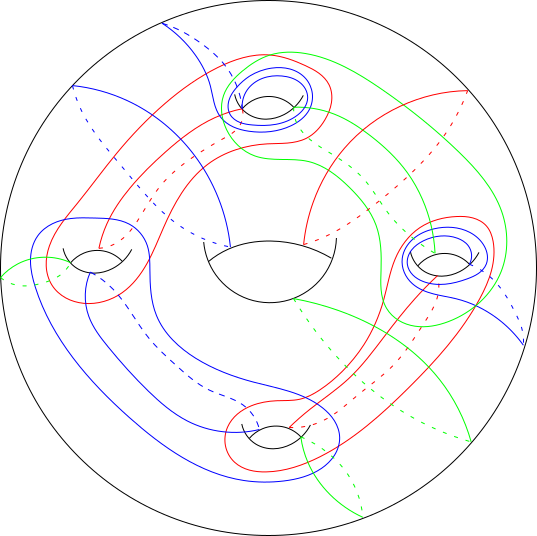}};
\end{tikzpicture}
\]
\caption{A trisection diagram for $S^1 \times M$, when $M \simeq L(2,1)$}
\label{koenigdiagramg5}
\end{figure}

\subsubsection{Meier's trisections of spun manifolds}
Let $M$ be a closed $3$--manifold and $M^o = M \setminus Int(B^{3})$. The spun of $M$ is defined as $\mathcal{S}(M) = (M^o \times S^{1}) \cup_{S^{2} \times S^{1}, Id} (S^{2} \times D^{2})$. We proceed as in the previous example to draw a sutured Heegaard diagram for $M^o$ from a genus $g$ Heegaard diagram for $M$. Then we apply the procedure defined in \ref{rtdcase2} to obtain a relative trisection diagram for $M^o \times S^{1}$, and finally an arced diagram for $M^o \times S^{1}$ (an example is featured on Figure \ref{arceddiagg6d2}, using the Heegaard diagram for $M \simeq L(2,1)$ on Figure \ref{hdl21}, right). An arced diagram for $S^{2} \times D^{2}$ is featured on Figure \ref{arceddiags2d2} (see \cite{castro2018diagrams}). As the pages and arcs of these diagrams agree, we can glue them to obtain a $(6g+3)$--trisection diagram for $\mathcal{S}(M)$. Interestingly, we can explicitly modify this diagram through handleslides and three destabilizations to obtain a diagram that is just the initial relative diagram of $M^o \times S^{1}$ with two disks capping off its boundary components (see Figure \ref{trisecdiagspung6} for an example). Then we can further modify this trisection diagram through handleslides and $3g$ destabilisations to obtain the diagram featured in \cite{meier2017trisections}.

\begin{figure}[h!]
\[
\begin{tikzpicture}[scale=0.4]
\node (mypic) at (0,0) {\includegraphics[scale=0.4]{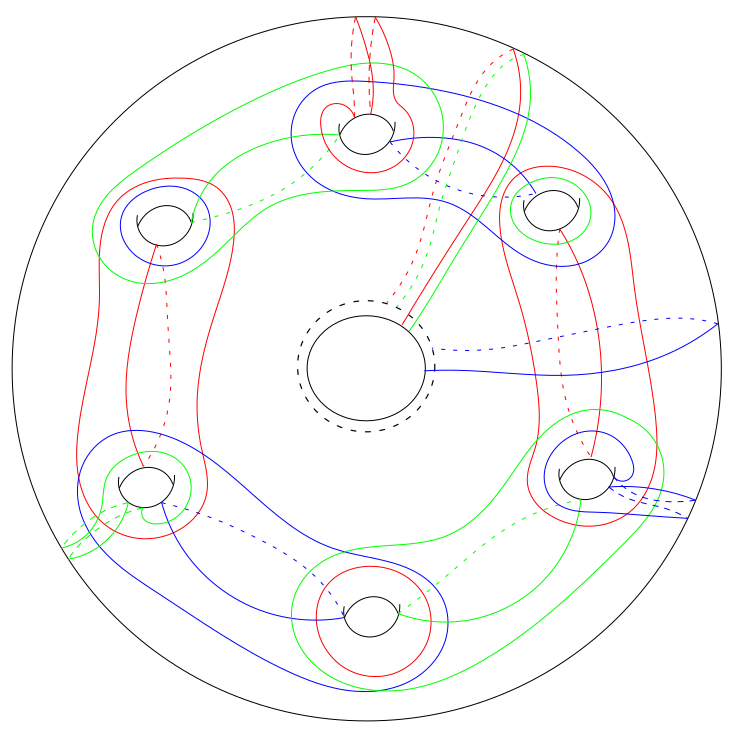}};

\end{tikzpicture}
\]
\caption{A genus $6$ arced diagram for $S^1 \times M^o$, when $M \simeq L(2,1)$}
\label{arceddiagg6d2}
\end{figure}

\begin{figure}[h!]
\[
\begin{tikzpicture}[scale=0.3]
\node (mypic) at (0,0) {\includegraphics[scale=0.4]{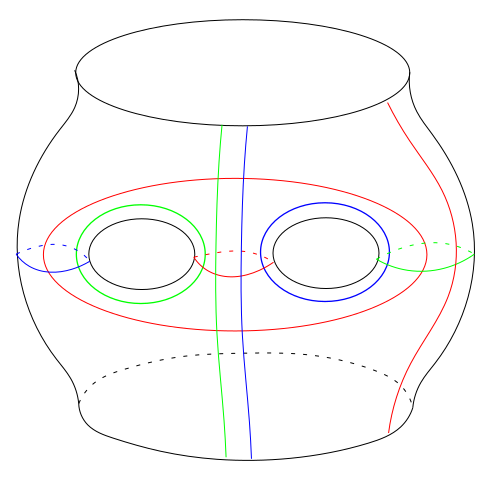}};

\end{tikzpicture}
\]
\caption{An arced diagram for $S^2 \times D^2$\\{\footnotesize We made some handleslides of arcs to prepare for the future destabilizations of the glued diagram.}}
\label{arceddiags2d2}
\end{figure}

\begin{figure}[h!]
\[
\begin{tikzpicture}[scale=0.4]
\node (mypic) at (0,0) {\includegraphics[scale=0.4]{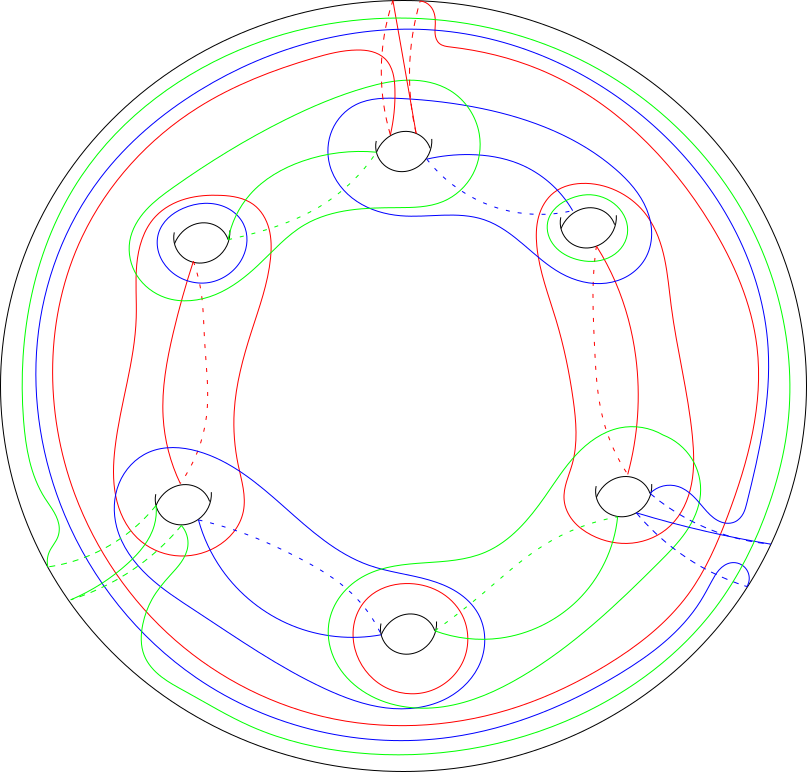}};

\end{tikzpicture}
\]
\caption{A trisection diagram for $\mathcal{S}(M)$, when $M \simeq L(2,1)$}
\label{trisecdiagspung6}
\end{figure}

\subsubsection{Trisections of some $4$--dimensional open-books}
\label{trisectionopenbooks}
Let $M$ be a compact $3$--manifold with boundary a torus, and $\phi$ a self-diffeomorphism of $M$ that is the identity on a regular neighbourhood of $\partial M$. We define the $4$--dimensional open-book of fiber $M$ and monodromy $\phi$ as:
\begin{equation*}
\mathcal{OB}(M) = \big( (M \times I ) / \sim \big) \cup_{\partial M \times S^{1}, Id} \big( \partial M \times D^2 \big), \text{where} \ (x,0) \sim (\phi(x) , 1).
\end{equation*} 
We prove the following. 
\begin{Theorem}
\label{openbookth}
Let $\mathcal{OB}(M)$ be a $4$--dimensional open-book of fiber $M$ and mono\-dromy $\phi$, where $M$ is a compact manifold with boundary a torus. Then there exists a sutured Heegaard splitting of $M$, preserved by $\phi$, that induces a decomposition of $\partial M$ as $\partial M = ( \partial M \setminus \mathring{D}) \cup D$, whith $D$ a disk in $\partial M$. Further, $\mathcal{OB}(M)$ admits a $(6g+4)$--trisection, where $g$ is the genus of this sutured Heegaard splitting of $M$. Moreover, a trisection diagram for $\mathcal{OB}(M)$ can be explicitly derived from a sutured Heegaard diagram corresponding to this splitting.    
\end{Theorem}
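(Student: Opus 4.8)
The plan is to realise $\mathcal{OB}(M)$ as the union of two relatively trisected pieces and to apply the gluing result of Castro--Gay--Pinz\'on-Caicedo (Theorem \ref{thmgluerel}), exactly as in the preceding examples of Koenig's bundles and Meier's spun manifolds.

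\textbf{Step 1 (a convenient preserved splitting).} Since $\phi$ is the identity on a neighbourhood of $\partial M=T^2$, any disk $D\subset\partial M$ yields a sutured decomposition $\partial M=(\partial M\setminus\mathring D)\cup(\partial D\times I)\cup D$ that is automatically preserved by $\phi$. By Proposition \ref{prop:exsplit} there is a sutured Heegaard splitting $M=C_1\cup_S C_2$ inducing it, with $\partial_-C_1\simeq\partial M\setminus\mathring D\simeq F_{1,1}$ and $\partial_-C_2\simeq D$, so that $S$ has a single boundary component. Running the argument of Theorem \ref{th:sut} (i.e. Proposition \ref{prop:isot} together with the fact that the image of a stabilisation is a stabilisation) we may, after finitely many interior stabilisations, assume the splitting is preserved by $\phi$. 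Denote by $g$ the genus of $S$ in the resulting splitting; this is the $g$ in the statement, and it also furnishes the asserted decomposition of $\partial M$.

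\textbf{Step 2 (relatively trisecting the mapping torus).} Let $X=(M\times I)/\sim$, so that $\mathcal{OB}(M)=X\cup_{\partial M\times S^1}(\partial M\times D^2)$ and $\partial X=\partial M\times S^1=T^3$. Apply Theorem \ref{th1} (Case 2) to $X$: here $\partial X$ is connected, $\phi$ fixes the unique boundary circle of $\partial S$, $p_1=1$, $p_2=0$ and $b=1$, so the formulas of Theorem \ref{th1} give a $(6g,2g+2;1,2)$--relative trisection of $X$, with central surface $F_{6g,2}$ and page $P=F_{1,2}$. (Exploiting $\partial_-C_2\simeq D^2$, one may instead use the lower-complexity unbalanced decomposition of Figure \ref{decomps1Mo}, as in the Koenig example; the bookkeeping is unchanged.) From a sutured Heegaard diagram of $M$ adapted to this splitting, the algorithm of Section 5 followed by Remark \ref{rmkarc} produces an explicit arced diagram for $X$.

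\textbf{Step 3 (the complementary piece).} It remains to exhibit a relative trisection of $\partial M\times D^2=T^2\times D^2$ that induces on $\partial(T^2\times D^2)=T^3$ an open book with page $F_{1,2}$ mirror to the one induced on $\partial X$ by the trisection of Step 2. I would construct this piece by a direct recipe --- in the spirit of the standard arced diagrams for $S^1\times B^3$ and $S^2\times D^2$ used above --- and draw its arced diagram so that its page and arc systems on $T^3$ coincide, under the identity (orientation-reversing) diffeomorphism of $T^3$, with those read off from the arced diagram of $X$. A count shows this is a genus $3$ relative trisection with page $F_{1,2}$. Then Theorem \ref{thmgluerel} applies, gluing the two arced diagrams along the identity of $T^3$, and produces a trisection diagram of $X\cup(T^2\times D^2)=\mathcal{OB}(M)$ explicitly built from a sutured Heegaard diagram of $M$. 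By Remark \ref{rmkglu}, gluing the $(6g,\,\cdot\,;1,2)$ and $(3,\,\cdot\,;1,2)$ relative trisections gives a trisection of genus $6g+3+(2-1)=6g+4$, as claimed.

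\textbf{Main obstacle.} The essential work is in Step 3: building the arced diagram of $T^2\times D^2$ and, above all, checking that the open book it induces on $T^3$ is genuinely the mirror of the one induced by the relative trisection of $X$, so that the hypotheses of Theorem \ref{thmgluerel} hold. This reduces to matching the two open-book monodromies on the page $F_{1,2}$, computed from the arced diagrams as in \cite{castro2018diagrams}; propagating the arc systems correctly through the handleslides of Remark \ref{rmkarc} is where the argument is heaviest, and everything else (Steps 1, 2 and the genus count) is routine application of the results already established.
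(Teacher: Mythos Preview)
Your approach is essentially the paper's: exhibit a preserved sutured splitting inducing $\partial M=(\partial M\setminus\mathring D)\cup D$, relatively trisect the mapping torus via Case~2 to get a $(6g,2g+2;1,2)$--piece, and glue to a genus--$3$ relative trisection of $T^2\times D^2$ with page $F_{1,2}$ using Theorem~\ref{thmgluerel}, yielding genus $6g+4$.

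Two small points. First, the paper does not build the $T^2\times D^2$ arced diagram from scratch: it simply imports one from \cite{castro2017trisections} (Figure~\ref{arceddiagramT2D2}), which is exactly the $(h+2,2h+1;h,2)$--diagram with $h=1$, and then checks the arcs match. The key observation that dissolves your ``main obstacle'' is that the arc system computed on the bundle side turns out to be independent of both $M$ and $\phi$ (only the curve systems carry this data), so the matching is a single computation done once and for all rather than a case-by-case verification; you should make that independence explicit. Second, your parenthetical about using the unbalanced scheme of Figure~\ref{decomps1Mo} is misleading: that lower-complexity decomposition changes the genus of the bundle's central surface, so the bookkeeping is \emph{not} unchanged, and in any event the paper does not take that route here. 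Drop the parenthetical.
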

\begin{proof}
To produce a trisection of $\mathcal{OB}(M)$, we glue an arced diagram for the bundle $\big( (M \times I ) / \sim \big)$ to an arced diagram of $T^{2} \times D^{2}$ from \cite{castro2017trisections} (see Figure \ref{arceddiagramT2D2}). The first diagram is obtained from a sutured Heegaard diagram of $M$ that induces the decomposition on the boudary of $M$: $\partial M = ( \partial M \setminus \mathring{D}) \cup D$, where $D$ is a disk in $\partial M$. An example of a sutured diagram verifying this condition is featured on Figure \ref{modelshsforMopenbook}. The condition on the sutured Heegaard splitting allows the pages of the open-book decompositions of $\big( (M \times I ) / \sim \big)$ and $T^{2} \times D^{2}$ to be diffeomorphic. Note that, because $\phi$ is the identity on a neighborhood of $\partial M$, it preserves any sutured decomposition of $\partial M$, therefore this specific sutured Heegaard splitting always exists (see Theorem \ref{th:sut}). We obtain a relative trisection diagram with page a torus with two holes and compute a system of arcs that matches the one featured on Figure \ref{arceddiagramT2D2} (an example of the resulting diagram is drawn on Figure \ref{arceddiagrambundleopen}). Then we glue the two diagrams according to the identity, thus constructing a trisection diagram for $\mathcal{OB}(M)$ (see Figure \ref{trisectiondiagramopenbook} for an example; as the genus of the surfaces involved is quite high, we used this time a more suitable planar representation of these surfaces). While constructing the system of arcs of the relative trisection diagram for the bundle, one can check that it does not depend on the monodromy, nor on $M$ (in our figures, we took the monodromy to be the identity). Therefore we have produced a $(6g+4)$--trisection diagram of $\mathcal{OB}(M)$, from a sutured Heegaard splitting of $M$ of genus $g$.    
\end{proof}

\begin{figure}[h!]
\[
\begin{tikzpicture}[scale=0.2]
\node (mypic) at (0,0) {\includegraphics[scale=0.4]{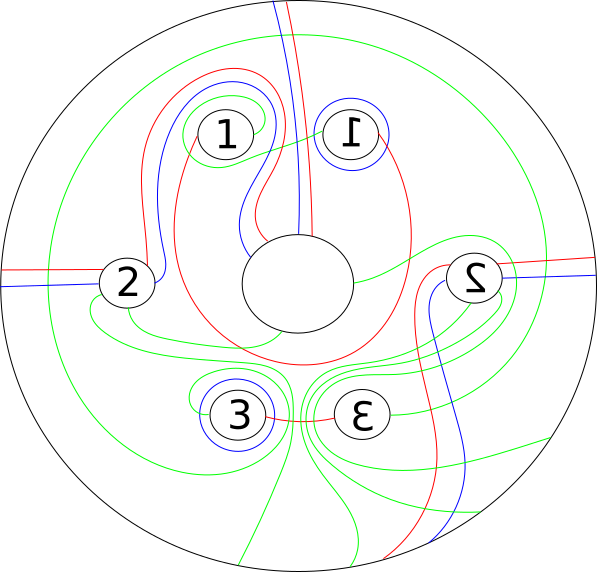}};
\end{tikzpicture}
\]
\caption{An arced diagram for $T^2 \times D^2$}
\label{arceddiagramT2D2}
\end{figure}

\begin{figure}[h!]
\[
\begin{tikzpicture}[scale=0.2]
\node (mypic) at (0,0) {\includegraphics[scale=0.4]{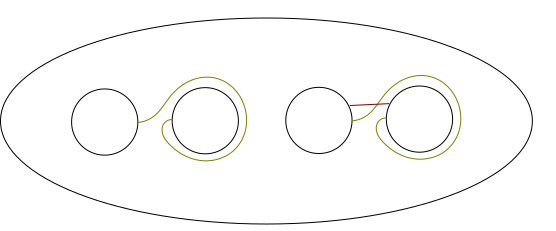}};
\node (a) at (-9,0) {A};
\node (a) at (-4,0) {\reflectbox{A}};
\node (a) at (2.5,0) {B};
\node (a) at (7.5,0) {\reflectbox{B}};
\end{tikzpicture}
\]
\caption{A sutured Heegaard splitting for some $M$ with boundary a torus\\{\footnotesize To construct such an example, consider a Heegaard diagram, with the handlebody corresponding to the $\alpha$ curves standardly embedded in $\mathbb{R}^3$,  then puncture the Heegaard surface next to an $\alpha$ curve and remove this curve.}}
\label{modelshsforMopenbook}
\end{figure}

\begin{figure}[h!]
\[
\begin{tikzpicture}[scale=0.3]
\node (mypic) at (0,0) {\includegraphics[scale=0.5]{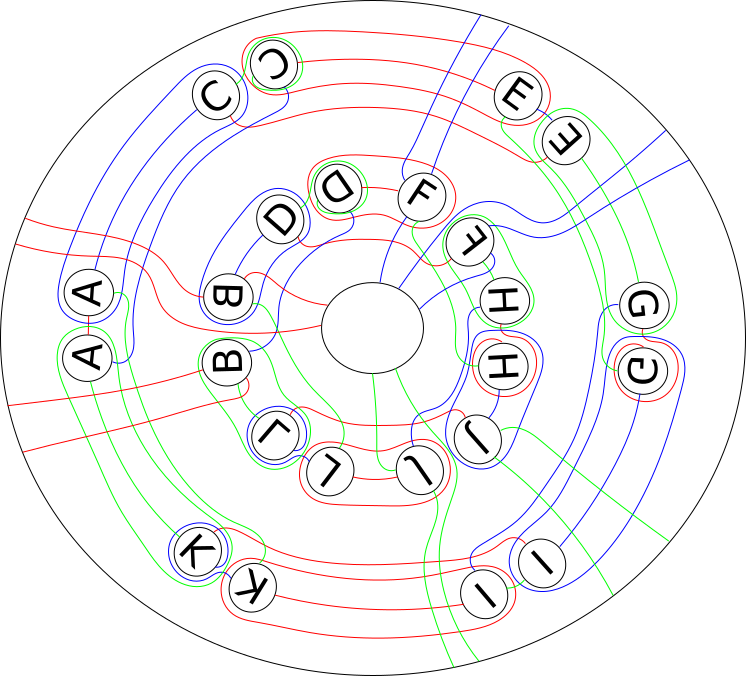}};
\end{tikzpicture}
\]
\caption{An arced diagram for $(M \times I ) / \sim$, obtained from the diagram on Figure \ref{modelshsforMopenbook}}
\label{arceddiagrambundleopen}
\end{figure}

\begin{figure}[h!]
\[
\begin{tikzpicture}[scale=0.4]
\node (mypic) at (0,0) {\includegraphics[scale=0.4]{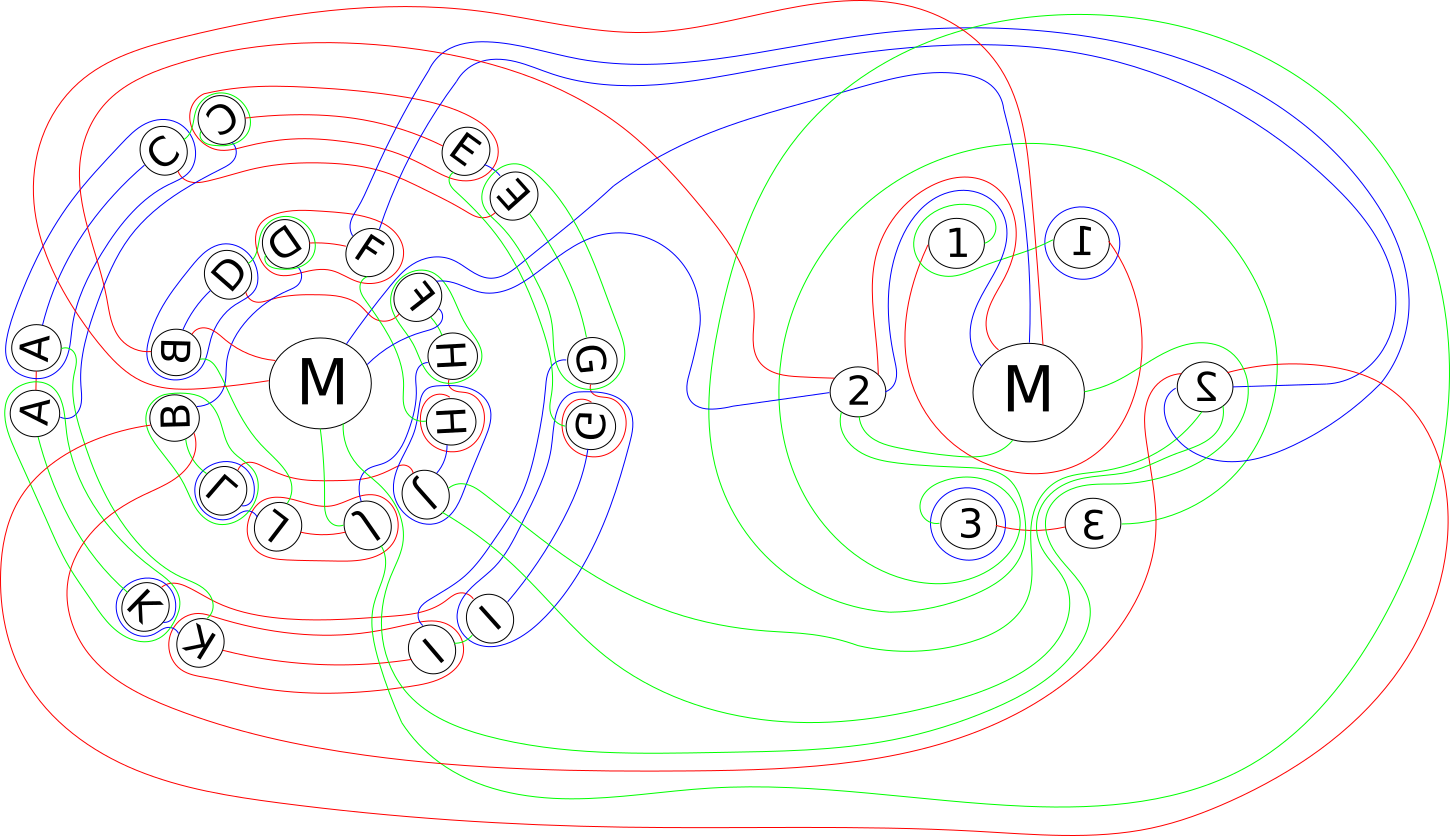}};
\end{tikzpicture}
\]
\caption{A trisection diagram for the open-book $\mathcal{OB}(M)$, obtained from the diagram on Figure \ref{modelshsforMopenbook}}
\label{trisectiondiagramopenbook}
\end{figure}

\begin{Remark}
We do not know if the genus of this trisection is minimal, nor if the diagram obtained on Figure \ref{trisectiondiagramopenbook} might be destabilised to produce a lower genus trisection.
\end{Remark}
\begin{Remark}
This method can be extended to produce a trisection diagram for \emph{any} $4$--dimensional open-book (without restricting the number of boundary components of the fiber or their genus). One has to check the compatibility of the pages induced by the relative trisections of the bundle over $S^{1}$ (of fiber $M$) and the bundle over $\partial M$ (of fiber $D^{2}$), but this can always be achieved. To see this, consider first that $\partial M$ is a connected surface of genus $h$. Then (see \cite{castro2018diagrams}), $\partial M \times D^{2}$ admits a $(h+2,2h+1;h,2)$--relative trisection. But (Theorem \ref{th1}), the bundle $(M \times I) / \sim$ admits a $(6g+5b-5, 2g +p_{1} + p_{2} +4b -3 ; p_{1} + p_{2} +b -1,2b)$--relative trisection, where $\partial M$ admits a sutured decomposition between two surfaces of genus $p_{1}$ and $p_{2}$ with $b$ boundary components. Therefore, to obtain diffeomorphic pages on the induced open-book decompositions of the boundaries of the two bundles, we need $b=1$ and $p_{1}+p_{2}=h$, which can always be achieved by starting from a sutured decomposition $\partial M = (\partial M \setminus Int(D)) \cup D$, where $D$ is a disk embedded in $\partial M$. Using Remark \ref{rmkglu}, we can compute that we have obtained a $(6g+h+3,2g+h+1)$--trisection of $\mathcal{OB}(M)$, where $h$ is the genus of $\partial M$ and $g$ the genus of the sutured Heegaard splitting of $M$. Then we could extend to the case where $\partial M$ is not connected by applying this method to every boundary component. 
\end{Remark}
\FloatBarrier
\newpage  
\bibliographystyle{/home/rudy/Documents/article/smfalpha.bst}
\bibliography{/home/rudy/Documents/article/biblioarticlefibre.bib} 
\end{document}